\numberwithin{equation}{section}
\newtheorem{teo}{Theorem}[section]
\newtheorem{lemma}[teo]{Lemma}
\newtheorem{pro}[teo]{Proposition}
\newtheorem{re}[teo]{Remark}}
\DeclareMathOperator{\E}{\mathbf{E}}
\DeclareMathOperator{\supp}{supp}
\DeclareMathOperator{\sh}{sh}
\DeclareMathOperator{\ch}{ch}
\DeclareMathOperator{\cth}{cth}
\DeclareMathOperator{\sign}{sgn}
\DeclareMathOperator{\dist}{dist}
\newcommand{\ind}{\mathbf{1}}
\newcommand{\complex}{\mathbf{C}}
\newcommand{\real}{\mathbf{R}}
\newcommand{\rational}{\mathbf{Q}}
\newcommand{\integer}{\mathbf{Z}}
\renewcommand{\natural}{\mathbf{N}}
\newcommand{\eg}{\emph{e.g.\ }}
\renewcommand{\epsilon}{\varepsilon}
\newcommand{\mc}[1]{{\mathcal #1}}
\newcommand{\bb}[1]{{\mathbb #1}}
 \title{\bf Boundary driven Brownian gas} 
 \author{Lorenzo Bertini and Gustavo Posta \medskip \\
 \footnotesize{Università di Roma ``La Sapienza'', P.le A. Moro 5\ \  00185 Roma}\\
  \footnotesize{bertini@mat.uniroma1.it\ \ \ \  gustavo.posta@uniroma1.it}
\date{}}
\begin{document}

\noindent

\maketitle
\thispagestyle{empty}

\begin{abstract}
  We consider a gas of independent Brownian particles on a bounded
  interval in contact with two particle reservoirs at the endpoints.
  Due to the Brownian nature of the particles, infinitely many
  particles enter and leave the system in each time interval.
  Nonetheless, the dynamics can be constructed as a Markov process
  with continuous paths on a suitable space.  If $\lambda_0$ and
  $\lambda_1$ are the chemical potentials of the boundary reservoirs,
  the stationary distribution (reversible if and only if
  $\lambda_0=\lambda_1$) is a Poisson point process with intensity
  given by the linear interpolation between $\lambda_0$ and
  $\lambda_1$.  We then analyze the empirical flow that it is defined
  by counting, in a time interval $[0,t]$, the net number of particles
  crossing a given point $x$.  In the stationary regime we identify
  its statistics and show that it is given, apart an $x$ dependent
  correction that is bounded for large $t$, by the difference of two
  independent Poisson processes with parameters $\lambda_0$ and
  $\lambda_1$.
\end{abstract}

\section{Introduction}
\label{sec:intro}

Stationary non-equilibrium states, that describe a steady flow
thought some system, are the simplest examples of non-equilibrium
phenomena. The prototypical example is the case of an iron rod whose
endpoints are thermostated at two different temperatures. For these
systems the paradigm of statistical mechanics, i.e.\ the Boltzman-Gibbs
formula, is not applicable and an analysis of the dynamics is required
to construct the relevant statical ensambles.  In the last years,
considerable progress on stationary non-equilibrium states has been
achieved by considering as basic model stochastic lattice gases, that
consist on a collection of interacting random walks on the lattice,
while the reservoirs are modeled by birth and death
processes on the boundary sites. 
The analysis of such \emph{boundary driven} models has revealed a few
different features with respect to the their equilibrium, i.e.\
reversible, counterpart such as the presence of long range
correlations in the stationary measure even at high temperature and
the occurrence of dynamical phase transitions that can be spotted by
analyzing the large deviation properties of the empirical current. We
refer to \cite{mft,kir} for reviews on these topics.

The present purpose is the construction of boundary driven models for
particles living on the continuum and not on the lattice, the main
issue being the modeling of the boundary reservoirs. If we consider
Brownian motion with absorption at the boundary as basic reference for
the bulk dynamics, the boundary reservoirs need to inject Brownian
particles on the system and accordingly each particle entering the
system immediately leaves it.  Nonetheless, it should be possible to
define, possibly through a limiting procedure, a suitable model of the
reservoirs in which, out of the infinitely many entrances on a given
time interval, the number of particles that managed to move away from
the boundary at least by $\epsilon>0$ is finite with probability one.

We here pursue the above program in the simple case of independent
particles in the interval $(0,1)$, the resulting process is referred
to as the \emph{boundary driven Brownian gas}.  In fact, we present 
several alternative constructions of this process.  We define the
dynamics as a Markov process by specifying explicitly its transition
function, we provide a construction from the excursion process of a
single Brownian on $(0,1)$, and give a graphical construction of its
restriction to the sub-intervals $[a,1-a]$, $0<a<1/2$ with a
Poissonian law of the entrance times.  We finally obtain this process
by considering the limit of $n$ independent sticky Brownians on
$[0,1]$ with stickiness parameter of order $1/n$.  From the last
convergence we deduce in particular the continuity of the paths of the
boundary driven Brownian gas.

Since there is no interaction among the particles, the invariant measure of the boundary driven
Brownian gas is simply a Poisson point process on $(0,1)$.
More precisely, letting  $\lambda_0$ and $\lambda_1$ being the
chemical potentials of the boundary reservoirs, the stationary distribution (reversible if and only if
$\lambda_0=\lambda_1$) is a Poisson point process with intensity
given by the linear interpolation between $\lambda_0$ and
$\lambda_1$. 

For stochastic lattice gases, a relevant dynamical observable is
the empirical flow that is defined by counting, in a time interval
$[0,t]$, the net number of particles crossing a given bond.
In order to define the empirical flow for the boundary driven Brownian
gas, given $x\in(0,1)$ and $\epsilon>0$ we first count the total net number of
crossing of the interval $(x-\epsilon,x+\epsilon)$ in the time interval $[0,t]$.
The empirical flow at $x$ is then defined by taking the limit $\epsilon\to0$.
In the stationary regime we identify its statistics and show that it
is given, apart an $x$ dependent correction that is bounded for large
$t$, by the difference of two independent Poisson processes with
parameters $\lambda_0$ and $\lambda_1$.
A similar result in the context of stochastic lattice gases is proven
in \cite{Fe:Fo}.

The construction boundary driven Brownian gas presents some technical
issues. In order to have a well defined dynamics with finitely many
particles on each compact subset of $(0,1)$, the set of allowed
configurations needs to be properly chosen. The natural topology on
such state space is not Polish and additional analysis is is required. 
Moreover, the boundary driven Brownian gas is not a Feller process and
the regularity of its paths has to be properly investigated. As
outlined above, we deduce the continuity of the paths by considering
the limit of independent sticky Brownians, however the lack of the
Feller property has to circumvented in order to identify the finite
dimensional distributions.   

The dynamics of infinitely many stochastic particle has a long a rich
history, dating back to Dobrushin \cite{dob}.
The ergodic properties of independent particles have been early discussed in \cite{sy}.
More recently, an approach based on Dirichlet forms has been introduced in \cite{alb} and successively extensively developed.
We also mention \cite{Hi}, where a  model with immigration is considered.
The case of Brownian hard spheres is analyzed in \cite{Ta,Fr:Ro:Ta}.

%?? aggiungere citazioni

\section{Construction and basic properties}
\label{sec:con}

A configuration of the system is given by specifying  the positions of all the particles.
Regarding these particles as indistinguishable, we identify a configuration with the integer valued measure on $(0,1)$ obtained by giving unit mass at the position of each particle.
To construct the dynamics, we need to specify a suitable \emph{temperedness} condition on the set of allowed configurations.

We denote by $C_K(0,1)$ the set of continuous function on $(0,1)$ with
compact support.  As usual, $C_0(0,1)$ is the closure in the uniform
norm of $C_K(0,1)$.  We identify $C_0(0,1)$ with the functions in
$C[0,1]$ that vanish at the endpoints.  Letting $m\in C_0(0,1)$ be the
function $m(x):=x(1-x)$, we define $\Omega$ as the set of
$\integer_+\cup\{+\infty\}$-valued (Radon) measures $\omega$ on
$(0,1)$ satisfying $\omega(m)<+\infty$.  In particular, an element
$\omega\in\Omega$ can be written as $\omega=\sum_k\delta_{x_k}$ for
some $x_1,x_2,\dots\in(0,1)$ such that $\sum_km(x_k)<+\infty$.  Hence
$\omega$ can be identified with the finite or infinite multi-subset
$[x_1,x_2,\dots]$ of $(0,1)$.  We sometimes use this identification by
writing $x\in\omega$ when $\omega(\{x\})>0$.  The number of particles
in the configuration $\omega$ is
$|\omega|:=\omega(0,1)\in\integer_+\cup\{+\infty\}$.

We consider $\Omega$ endowed with the weakest topology such that the map $\omega\mapsto\omega(m\phi)$ is continuous for any $\phi\in C_0(0,1)$.
Referring to Appendix~\ref{sec:appa} for topological amenities, we here note that for each $\ell\in\real_+$, the set $K_\ell:=\{\omega\in\Omega:\omega(m)\leq\ell\}$ is compact and the relative topology on $K_\ell$ is Polish, see Lemma~\ref{lem:kl}.
Finally, we consider $\Omega$ also as a measurable space by endowing it with the Borel $\sigma$-algebra $\mathcal{B}(\Omega)$.

The dynamics will be defined, as a Markov process, by describing explicitly the transition function.
We start with the case in which particles do not enter the system.
To this end, we let $\{\mathbf{P}_x^0\}_{x\in(0,1)}$ be the Markov family of Brownian motions on $(0,1)$ with absorption at the endpoints.
We denote by $p_t^0(x,dy)$ the associated transition function and define the hitting time $\tau_a:=\inf\{t\geq0: X(t)=a\}$, $a\in[0,1]$.
Set $\tau:=\tau_0\wedge\tau_1$ and observe that $m(x)=\E_x^0(\tau)$.

Given $\omega\in\Omega$, let $P_t^0(\omega,\cdot)$ be the law at time $t\geq0$ of independent identical particles starting from $\omega$ and performing Brownian motions on $(0,1)$ with absorption at the endpoints.
%As proved in Lemma~\ref{lem:pt0}, $P_t^0(\omega,\cdot)$ is the probability on $\Omega$ identified by the characteristic function:
% \begin{equation}
% \label{pt0}
%   \int P_t^0(\omega,d\eta) e^{i\eta(\psi)}
%   =\prod_{x\in\omega}\int p_t^0(x,dy)e^{i\psi(y)},
%   \qquad
%   \qquad
%   \psi\in C_K(0,1).
% \end{equation}
As we will show in Lemma~\ref{lem:CK} below,
$P_t^0\colon\Omega\times\mathcal{B}(\Omega)\to[0,1]$, $t\geq0$ is a
time homogeneous transition function.

To describe the entrance of particles we need an auxiliary Poisson point process that we next introduce.
Given a positive Radon measure $\mu$ on $(0,1)$ such that $\mu(m)<+\infty$, we denote by $\Pi_\mu$ the law of a Poisson point process with intensity measure $\mu$ (see \eg \cite[Chapter 29]{Fr:Gr}).
Since $\Pi_\mu(\omega(m))=\mu(m)<+\infty$ we can regard $\Pi_\mu$ as a probability on $\Omega$.
% As follows from Lemma~\ref{lem:chf}, $\Pi_\mu$ is identified by its characteristic function
% \begin{equation*}
%   \int\Pi_\mu(d\eta) e^{i\eta(\psi)}
%   =\exp\{\mu(e^{i\psi}-1)\},
%   \qquad
%   \qquad
%   \psi\in C_K(0,1).
% \end{equation*}
Given $\lambda:=(\lambda_0,\lambda_1)\in\real_+^2$, that will play the role of the chemical potentials of the boundary reservoirs, and $t\geq0$ we
define the Radon measure on $(0,1)$
\begin{equation}
  \label{eq:mu}
  \mu_t^\lambda(dx)
   :=\big[\lambda_0 \mathbf{P}_x^0 (\tau_0\leq t)+\lambda_1 \mathbf{P}_x^0 (\tau_1\leq t)\big]dx.
\end{equation}

The time homogeneous transition function $P_t$ of the boundary driven Brownian gas is defined by the following procedure.
Given $\omega\in\Omega$ and $t\geq0$ the law of $\omega(t)$ is obtained by summing two independent variables, the first sampled according to $P_t^0(\omega,\cdot)$ the second according to $\Pi_{\mu_t^\lambda}$.
In other words, we let $P_t\colon\Omega\times\mathcal{B}(\Omega)\to[0,1]$, $t\geq0$ be defined by 
\begin{equation}
  \label{eq:pt}
  P_t(\omega,B)
  :=\iint_{\eta_1+\eta_2\in B} \Pi_{\mu_t^\lambda}(d\eta_1)
  P_t^0(\omega,d\eta_2).
\end{equation}

\begin{lemma}
  \label{lem:CK}
  For each $\lambda\in\real_+^2$ the family $P_t$, $t\geq0$ is a time homogeneous transition function on $(\Omega,\mathcal{B}(\Omega))$.
\end{lemma}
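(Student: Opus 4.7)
Three things must be verified: (a) $P_t(\omega,\cdot)$ is a probability on $(\Omega,\mathcal{B}(\Omega))$ for each $\omega\in\Omega$; (b) $\omega\mapsto P_t(\omega,B)$ is measurable for each Borel $B$; (c) the Chapman-Kolmogorov identity $P_{s+t}=P_sP_t$.

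For (a), both measures in \eqref{eq:pt} must be supported on $\Omega$. Applying Itô's formula to $m(X_t)\mathbf{1}_{t<\tau}$, together with $\tfrac12 m''=-1$ and $m|_{\{0,1\}}=0$, yields $\mathbf{E}_x^0[m(X_t)\mathbf{1}_{t<\tau}]=m(x)-\mathbf{E}_x^0[t\wedge\tau]\le m(x)$; summing over the (possibly infinitely many) independent particles in $\omega$ gives $\int \eta(m)\,P_t^0(\omega,d\eta)\le\omega(m)<+\infty$, so $P_t^0(\omega,\Omega)=1$. Similarly $\mu_t^\lambda(m)\le(\lambda_0+\lambda_1)\int_0^1 m\,dx<+\infty$, so $\Pi_{\mu_t^\lambda}(\Omega)=1$. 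Because the topology on $\Omega$ is generated by $\omega\mapsto\omega(m\phi)$ with $\phi\in C_0(0,1)$ and addition preserves this pairing, $(\eta_1,\eta_2)\mapsto\eta_1+\eta_2$ is continuous and hence Borel; the convolution in \eqref{eq:pt} is thus a Borel probability on $\Omega$. For (b), a Dynkin $\pi$-$\lambda$ argument reduces the problem to measurability of $\omega\mapsto P_t^0(\omega,\{\eta:\eta(m\phi)\le a\})$ on the generating cylinder family, which by independence of the particles and the Poisson factor follows from Fubini.

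The content of the lemma is (c). The key algebraic identity is
\begin{equation*}
\mu_{s+t}^\lambda = \mu_t^\lambda + P_t^0\mu_s^\lambda,\qquad (P_t^0\mu_s^\lambda)(dy):=\int_0^1\mu_s^\lambda(dx)\,p_t^0(x,y)\,dy.
\end{equation*}
By reversibility $p_t^0(x,y)=p_t^0(y,x)$ and the strong Markov property,
\begin{equation*}
\int_0^1 p_t^0(x,y)\,\mathbf{P}_x^0(\tau_0\le s)\,dx = \mathbf{E}_y^0\big[\mathbf{1}_{t<\tau}\mathbf{P}_{X_t}^0(\tau_0\le s)\big] = \mathbf{P}_y^0(t<\tau_0\le s+t) = \mathbf{P}_y^0(\tau_0\le s+t)-\mathbf{P}_y^0(\tau_0\le t),
\end{equation*}
with the analogous identity at the other endpoint, using that in the absorbed process $\{\tau_0\le r\}\subset\{\tau_0<\tau_1\}$. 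Given this, (c) follows from three standard ingredients: the semigroup property $P_s^0P_t^0=P_{s+t}^0$ for independent absorbed Brownian motions (applied to the original particles in $\omega$); the mapping theorem for Poisson processes, whereby the injections in $[0,s]$ of intensity $\mu_s^\lambda$, evolved independently for additional time $t$, produce a Poisson configuration of intensity $P_t^0\mu_s^\lambda$ on the survivors; and the superposition theorem, which combines this with the fresh injections $\Pi_{\mu_t^\lambda}$ from $[s,s+t]$ into $\Pi_{P_t^0\mu_s^\lambda+\mu_t^\lambda}=\Pi_{\mu_{s+t}^\lambda}$. Independence of the three components yields exactly $P_{s+t}(\omega,\cdot)$.

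The main obstacle I anticipate is not the algebra above but the topological bookkeeping: $\Omega$ is not Polish in its natural topology, so one must use the concentration of $P_t^0(\omega,\cdot)$ and $\Pi_{\mu_t^\lambda}$ on the compact Polish sublevel sets $K_\ell$ provided by Lemma~\ref{lem:kl} in order to make sense of the convolution in \eqref{eq:pt} as a Borel probability and to justify the Fubini/measurability steps in (b) within the correct measurable structure.
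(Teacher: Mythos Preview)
Your proof is correct and rests on the same identity as the paper's, namely $\mu_{s+t}^\lambda=\mu_t^\lambda+P_t^0\mu_s^\lambda$ (the paper's equation \eqref{eq:CK2}), proved the same way via the symmetry of $q_t^0$ and the strong Markov property. The difference is only in packaging: the paper reduces Chapman--Kolmogorov to this identity by testing against $e^{i\eta(\psi)}$ for $\psi\in C_K(0,1)$ and expanding the characteristic functionals of the Poisson pieces, while you invoke the mapping and superposition theorems for Poisson point processes directly. These are two phrasings of the same computation, since the identity $\int\Pi_\mu(d\eta)\,e^{i\eta(\psi)}=\exp\{\mu(e^{i\psi}-1)\}$ is precisely what encodes mapping and superposition.

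For parts (a) and (b) the paper is more careful than your sketch. It defers the construction of $P_t^0(\omega,\cdot)$ as a Borel probability on $\Omega$ to Lemma~\ref{lem:pt0}, using Borel--Cantelli to show that only finitely many of the absorbed Brownians survive (your moment bound on $\eta(m)$ would also do the job). For measurability of $(t,\omega)\mapsto P_t(\omega,B)$ the paper does not run a direct Dynkin argument but instead builds a continuous approximation $P_{t,\epsilon}$ (Lemma~\ref{lem:pte}) obtained by thinning $\omega$ near the boundary, so that Feller continuity is restored, and then passes to the limit. Your anticipated obstacle about the non-Polish topology is real and is exactly what that approximation is designed to circumvent; your Dynkin/Fubini sketch would need more detail at that point to be complete.
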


\begin{re}
  We observe that $P_t$ is not Feller.
  Let indeed $\omega_n:=n\delta_{1/n}$,
  then $\omega_n\to0$ but $P_t(\omega_n,\cdot)$ does not converge to $P_t(0,\cdot)$.
  Considering in fact $n$ independent Brownians with absorption starting at the point $1/n$, at time $t>0$ at least one reaches $(1/4,3/4)$ with probability uniformly bounded away from $0$.
\end{re}

\begin{proofof}{Lemma \ref{lem:CK}}
%  We understand the axiomatics of Markov transition functions as
%  stated in \cite[Ch. 4.1]{Et:Ku}.   
  Plainly, $P_0(\omega,\cdot)=\delta_\omega$.
  Moreover, by Lemma~\ref{lem:pt0}, for each $t>0$ and $\omega\in\Omega$, $P_t(\omega,\cdot)$ is a probability measure on $(\Omega,\mathcal{B}(\Omega))$.
  Furthermore by Lemma~\ref{lem:pte}, for each $B\in\mathcal{B}(\Omega)$ the map $\real_+\times\Omega\ni(t,\omega)\mapsto P_t(\omega,B)$ is Borel.

  It remains to show that $P_t$ satisfies the Chapman-Kolmogorov equation
  \begin{equation*}
    P_{s+t}(\omega,B)
    =\int P_s(\omega,d\eta) P_t(\eta,B)
    \qquad
    t,s\geq0,\;
    \omega\in\Omega,\;
    B\in\mathcal{B}(\Omega).
  \end{equation*}
  By Lemma~\ref{lem:chf} it is enough to show that for each $\psi\in C_K(0,1)$, and $s,t\geq0$
  \begin{equation}
    \label{eq:CK}
    \int P_{s+t}(\omega,d\eta)e^{i\eta(\psi)}
    =\iint P_s(\omega,d\eta)P_t(\eta,d\zeta)e^{i\zeta(\psi)}.
  \end{equation}
  By definition, the left hand side of \eqref{eq:CK} is
  \begin{equation*}
        \begin{split}
       &\int\Pi_{\mu_{s+t}^\lambda}(d\eta_1)\int P^0_{s+t}(\omega,d\eta_2)e^{i(\eta_1+\eta_2)(\psi)}\\
       &\qquad= \exp\big\{\mu^\lambda_{s+t}(e^{i\psi}-1)\big\}\int P^0_{s+t}(\omega,d\eta_2) e^{i\eta_2(\psi)},
    \end{split}
  \end{equation*}
  while its right hand side is
  \begin{equation*}
    \begin{split}
       &\int\Pi_{\mu_s^\lambda}(d\eta_1)\int P^0_s(\omega,d\eta_2)\int\Pi_{\mu_t^\lambda}(d\zeta_1)\int P_t^0(\eta_1+\eta_2,d\zeta_2)e^{i(\zeta_1+\zeta_2)(\psi)}\\
       &=\exp\big\{\mu^\lambda_t(e^{i\psi}-1)\big\}\int\Pi_{\mu_s^\lambda}(d\eta_1)\int P^0_s(\omega,d\eta_2) \int P_t^0(\eta_1+\eta_2,d\zeta_2)e^{i\zeta_2(\psi)}.
    \end{split}
  \end{equation*}
  In view of the product structure of $P_t^0$ we get
  \begin{equation*}
    \int P_t^0(\eta_1+\eta_2,d\zeta_2)e^{i\zeta_2(\psi)}
    =\int P_t^0(\eta_1,d\zeta_{2 1}) e^{i\zeta_{2 1}(\psi)}\int P_t^0(\eta_2,d\zeta_{2 2}) e^{i\zeta_{2 2}(\psi)}.
  \end{equation*}
  Using the Chapman-Kolmogorov equation for $P_t^0$ (this follows easily from its product structure and the Chapman-Kolmogorov equation for $p_t^0$)
  \begin{equation*}
    \int P^0_s(\omega,d\eta_2) \int P_t^0(\eta_2,d\zeta_{2 2}) e^{i\zeta_{2 2}(\psi)}
    =\int P^0_{s+t}(\omega,d\eta_2) e^{i\eta_2(\psi)}.
  \end{equation*}
  By the previous identities, the proof of \eqref{eq:CK} is completed once we show that
  \begin{equation}
    \label{eq:CK1}
    \int\Pi_{\mu_s^\lambda}(d\eta_1)\int P_t^0(\eta_1,d\zeta_{2 1}) e^{i\zeta_{2 1}(\psi)}
    =\exp\big\{\mu^\lambda_{s+t}(e^{i\psi}-1)-\mu^\lambda_{t}(e^{i\psi}-1)\big\}.
  \end{equation}
  The product structure of $P_t^0$ and standard properties of Poisson processes yield
  \begin{equation*}
    \int\Pi_{\mu_s^\lambda}(d\eta_1)\int P_t^0(\eta_1,d\zeta_{2 1}) e^{i\zeta_{2 1}(\psi)}
    =\exp\Big\{\int\mu_s^\lambda(dx)\int p_t^0(x,dy)\big(e^{i\psi(y)}-1\big)\Big\}.
  \end{equation*}
  Thus \eqref{eq:CK1} is implied by
  \begin{equation}
    \label{eq:CK2}
    \int\mu_s^\lambda(dx)p_t^0(x,\cdot)
    =\mu_{s+t}^\lambda-\mu_t^\lambda.
  \end{equation}
  We write
  \begin{equation*}
    p^0_t(x,dy)
    =\mathbf{P}_x^0(\tau_0\leq t)\delta_0(dy)+\mathbf{P}_x^0(\tau_1\leq t)\delta_1(dy)+q^0_t(x,y)dy,
  \end{equation*}
  and observe that $q_t^0$ is a symmetric function on $(0,1)^2$.
  \begin{equation*}
    \begin{split}
          \int\mu_s^\lambda(dx) q_t^0(x,y)
    &=\lambda_0\int dx\,\mathbf{P}_x^0(\tau_0\leq s) q_t^0(x,y)+\lambda_1\int dx\,\mathbf{P}_x^0(\tau_1\leq s) q_t^0(x,y)\\
    &=\lambda_0\int dx\, \mathbf{P}_x^0(\tau_0\leq s) q_t^0(y,x)+\lambda_1\int dx\,\mathbf{P}_x^0(\tau_1\leq s) q_t^0(y,x).
    \end{split}
  \end{equation*}
  By the strong Markov property of absorbed Brownian motion, for $y\in(0,1)$:
  \begin{equation*}
    \begin{split}
      \mathbf{P}_y^0(\tau_0\leq s+t)
      &=\mathbf{P}_y^0(\tau_0\leq t)+\int dx\, \mathbf{P}_x^0(\tau_0\leq s) q^0_t(y,x)\\
            \mathbf{P}_y^0(\tau_1\leq s+t)
      &=\mathbf{P}_y^0(\tau_1\leq t)+\int dx\, \mathbf{P}_x^0(\tau_1\leq s) q^0_t(y,x),
    \end{split}
  \end{equation*}
  and \eqref{eq:CK2} follows.

\end{proofof}

Lemma~\ref{lem:CK} yields the existence of a Markov family with transition function $P_t$.
This however does not give any regularity of the paths.
As we next state the paths of the boundary driven Brownian gas are continuous. 
The proof will be achieved by considering the Poissonian limit of independent sticky Brownians and it is deferred to Section~\ref{sec:3}.

\begin{teo}
  \label{t:mt}
  Given $\lambda\in\real_+^2$, there exists a Markov family
  $\{\mathbb{P}_\omega\}_{ \omega\in\Omega}$ on $C(\real_+,\Omega)$
  with transition function $P_t$.  
\end{teo}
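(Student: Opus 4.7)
The plan is to realize the Markov family as a weak limit on $C(\real_+,\Omega)$ of approximating processes built from independent sticky Brownian motions on $[0,1]$ with stickiness parameter of order $1/n$ at both endpoints, as the paper indicates will be carried out in Section~\ref{sec:3}. The initial configuration of the $n$-th approximation is chosen via a Poissonian construction at the boundary with intensities proportional to $n\lambda_0$ and $n\lambda_1$, plus a particle at each point of $\omega$ in the bulk, so that the effective ejection rate from endpoint $i$ converges to $\lambda_i$. Let $\omega^n(t)$ denote the counting measure of the particles currently inside $(0,1)$. Since each sticky trajectory is continuous in $[0,1]$, since only finitely many particles are involved in each approximation, and since the test functions $m\phi$ with $\phi\in C_0(0,1)$ vanish at the endpoints, the map $t\mapsto\omega^n(t)$ is automatically continuous into $\Omega$.

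Next, I would prove tightness of $\{\omega^n\}$ in $C(\real_+,\Omega)$. Since $\Omega$ is not Polish, tightness must be formulated on the compact Polish subsets $K_\ell=\{\omega:\omega(m)\leq\ell\}$ provided by Lemma~\ref{lem:kl}. The ingredients are a uniform compact-containment estimate $\sup_n\mathbb{P}\bigl(\sup_{t\in[0,T]}\omega^n(t)(m)>\ell\bigr)\to 0$ as $\ell\to\infty$, and tightness in $C(\real_+,\real)$ of each real-valued process $t\mapsto\omega^n(t)(\phi)$ for $\phi\in C_K(0,1)$. The first uses the identity $m(x)=\mathbf{E}_x^0(\tau)$ to bound $\mathbb{E}\,\omega^n(t)(m)$ uniformly in $n$ by balancing the $O(n)$ boundary injection rate against the $O(1/n)$ average residence time in $(0,1)$ of an injected particle, with a Doob-type argument promoting this to a bound on the supremum. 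The second follows from the continuity of each individual sticky path together with the mass bound.

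Given tightness, I would identify every subsequential limit $\mathbb{P}_\omega$ by showing that its finite-dimensional distributions coincide with those prescribed by $P_t$. By Lemma~\ref{lem:chf} it suffices to compute the joint characteristic functional $\mathbb{E}\exp\bigl(i\sum_k\omega(t_k)(\psi_k)\bigr)$ with $\psi_k\in C_K(0,1)$. Independence of the particles factorizes this expression, and the central limiting step is that the $\sim n\lambda_i$ sticky Brownians at endpoint $i$ with stickiness $1/n$ produce at time $t$ a Poisson point process on $(0,1)$ with intensity $\lambda_i\mathbf{P}_x^0(\tau_i\leq t)\,dx$; combined with the independent absorbed Brownian evolution of the bulk, this reproduces $P_t(\omega,\cdot)$ in \eqref{eq:pt}. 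The Chapman--Kolmogorov identity from Lemma~\ref{lem:CK} then upgrades convergence of finite-dimensional distributions to the Markov property of the limit, circumventing the absence of the Feller property.

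The hardest step will be the compact-containment estimate. Because infinitely many particles enter and leave in any time interval there is no mass conservation, and the bound must exploit the short residence time of boundary-injected particles quantitatively and uniformly in $n$. The cleanest route is probably an It\^o-type decomposition of $t\mapsto\omega^n(t)(m)$ whose drift and quadratic variation can be controlled uniformly in $n$, yielding both an $L^1$ bound and a submartingale estimate to feed into Doob. A subsidiary difficulty is that, since $P_t$ is not Feller, the identification of the limit cannot rely on a semigroup convergence argument and must go through the characteristic-functional characterization of Lemma~\ref{lem:chf} and the Chapman--Kolmogorov identity of Lemma~\ref{lem:CK}.
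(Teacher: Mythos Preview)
Your overall strategy---approximate by $n$ independent sticky Brownians on $[0,1]$ with stickiness of order $1/n$, prove tightness via compact containment in the sets $K_\ell$ using an It\^o decomposition of $t\mapsto\omega^n(t)(m)$ and Doob's inequality, then identify the limit through multi-time characteristic functionals---is exactly the route the paper takes in Section~\ref{sec:3}, and your diagnosis of the compact-containment estimate as the main analytic hurdle matches the paper's emphasis (Lemma~\ref{lem:ti1}).

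Two places where your plan is either inaccurate or underspecified relative to the paper's execution. First, the initial datum: the paper uses no Poissonian injection but places exactly $n$ particles deterministically, $A_n=o(n)$ of them at the points of $\omega$ restricted to $(a_n,1-a_n)$ for a suitable $a_n\downarrow 0$, and the remaining $n-A_n$ split half at $0$ and half at $1$; the asymmetry between reservoirs is carried entirely by the stickiness $\theta_n=(\lambda_0/n,\lambda_1/n)$. Your phrase ``a particle at each point of $\omega$ in the bulk'' fails when $|\omega|=\infty$, so a truncation of $\omega$ near the boundary is unavoidable if the $n$-th system is to have finitely many particles. Second, on the non-Feller issue: invoking the Chapman--Kolmogorov identity of Lemma~\ref{lem:CK} does not by itself let you pass from one-time to multi-time convergence, because the map $\eta\mapsto\int P_t(\eta,d\zeta)e^{i\zeta(\psi)}$ is discontinuous on $\Omega$ and you cannot simply take the limit inside the conditional expectation after applying the Markov property of the sticky system. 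The paper's actual device (Section~\ref{sec:id} and Appendix~\ref{sec:appb}) is a thinned, continuous approximation $P_{t,\epsilon}$ of $P_t$, combined with a priori bounds (\eqref{eq:1}, \eqref{eq:2}) showing that at any positive time the sticky configuration lies, with high probability uniformly in $n$, in a set on which the one-time convergence of Lemma~\ref{lem:1} is uniform and $P_{t,\epsilon}\to P_t$. This approximation is the real content of the inductive step in Proposition~\ref{teo:lim} and is not yet present in your plan.
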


In the rest of this section we discuss some basic properties of $\mathbb{P}_\omega$.
We start by describing its stationary measure.
To this end, we premise few more definitions. 
Set $\bar\lambda(x):=\lambda_0(1-x)+\lambda_1 x$, $x\in (0,1)$ and let $\{\mathbf{Q}_x^0\}_{x\in(0,1)}$ be the Markov
family of diffusions on $(0,1)$ with absorption at the
endpoints, drift $\big(\log \bar \lambda\big)'$ and diffusion
coefficient one.  We denote by $g_t^0(x,dy)$ the associated time
homogeneous transition
function.
Given $\omega\in\Omega$, let $Q_t^0(\omega,\cdot)$ be the law at time
$t\geq 0$ of independent particles starting from $\omega$
and evolving according to $\mathbf{Q}^0$.
% Then $Q_t^0(\omega,\cdot)$ is
% the probability on $\Omega$ identified by the characteristic function
% \begin{equation}
% \label{pt0*}
%   \int Q_t^0(\omega,d\eta) e^{i\eta(\psi)}
%   =\prod_{x\in\omega}\int g_t^0(x,dy)e^{i\psi(y)},
%   \qquad
%   \qquad
%   \psi\in C_K(0,1).
% \end{equation}

Let also $\nu_t^\lambda$, $t\in\real_+$, the measure on $(0,1)$ defined by 
\begin{equation*}
  \nu_t^\lambda(dx) := \bar\lambda(x) \mathbf{P}_x^0(\tau\le t)\, dx.
\end{equation*}
Finally, let $Q_t(\omega,\cdot)$, $t\ge 0 $, $\omega\in\Omega$ be
the family of probabilities on $\Omega$ defined by 
\begin{equation}
\label{pt*}
  Q_t(\omega,B)
  :=\iint_{\eta_1+\eta_2\in B} \Pi_{\nu_t^\lambda}(d\eta_1)
  Q_t^0(\omega,d\eta_2).
\end{equation}
Arguing as in Lemma \ref{lem:CK},
%it is possible to show that
$Q_t$ is a time homogeneous transition function on $\big(\Omega,\mathcal B(\Omega)\big)$.

\begin{pro}
  \label{t:agg}
  The Poisson point process $\Pi_{\bar\lambda}$ with intensity
  $\bar\lambda(x)dx$ is a stationary distribution for the Markov
  family $\{\mathbb P_\omega\}_{\omega\in\Omega}$. It is reversible if
  and only if $\lambda_0=\lambda_1$.  Furthermore, letting $T_t$,
  $t\ge 0$, be the semigroup on $L^2(\Omega;\Pi_{\bar\lambda})$
  associated to the family $\{ \bb P_\omega\}_{\omega\in\Omega}$ then
  its adjoint $T_t^*$ is the semigroup associated to the Markov family
  with transition function $Q_t$ given by \eqref{pt*}.
\end{pro}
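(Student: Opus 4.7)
The plan is to verify all three assertions at the level of (bivariate) characteristic functionals. For $\psi_1, \psi_2 \in C_K(0,1)$, set
\begin{equation*}
F(\psi_1,\psi_2) := \iint \Pi_{\bar\lambda}(d\omega)\,P_t(\omega,d\eta)\,e^{i\omega(\psi_1)+i\eta(\psi_2)},
\end{equation*}
and define $G(\psi_1,\psi_2)$ analogously from the measure $\Pi_{\bar\lambda}(d\eta)\,Q_t(\eta,d\omega)$ on $\Omega\times\Omega$. Once $F \equiv G$ is established, the bivariate analogue of Lemma~\ref{lem:chf} yields the identity $\Pi_{\bar\lambda}(d\omega)\,P_t(\omega,d\eta) = \Pi_{\bar\lambda}(d\eta)\,Q_t(\eta,d\omega)$ of measures on $\Omega\times\Omega$; pairing with bounded measurable $f,g$ gives the adjoint relation $\langle f,T_tg\rangle_{\Pi_{\bar\lambda}} = \langle U_tf,g\rangle_{\Pi_{\bar\lambda}}$, where $U_t$ is the semigroup with kernel $Q_t$, while taking marginals gives the stationarity of $\Pi_{\bar\lambda}$ under both families.

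Two elementary identities drive the computation. First, since $\bar\lambda$ is linear and hence harmonic for the absorbed Brownian motion, $\bar\lambda(x) = \mathbf{E}_x^0[\bar\lambda(X(t))]$; decomposing by absorption time and invoking the symmetry $q_t^0(x,y)=q_t^0(y,x)$ yields the balance
\begin{equation*}
\int \bar\lambda(x)\,q_t^0(x,y)\,dx + \lambda_0\mathbf{P}_y^0(\tau_0\le t) + \lambda_1\mathbf{P}_y^0(\tau_1\le t) = \bar\lambda(y).
\end{equation*}
Second, because the drift of $\mathbf{Q}^0$ is $(\log\bar\lambda)'$, the diffusion $\mathbf{Q}^0$ is Doob's $\bar\lambda$-transform of $\mathbf{P}^0$, so $g_t^0(x,y) = (\bar\lambda(y)/\bar\lambda(x))\,q_t^0(x,y)$, and in particular
\begin{equation*}
\bar\lambda(x)\,q_t^0(x,y) = \bar\lambda(y)\,g_t^0(y,x).
\end{equation*}

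To compute $F$, I would apply the Poisson characteristic functional $\mathbf{E}_{\Pi_\mu}\bigl[\prod_{x\in\omega}f(x)\bigr]=\exp\{\mu(f-1)\}$ once to the independent creation term of intensity $\mu_t^\lambda$ and once to the initial $\Pi_{\bar\lambda}$-configuration, each of whose points $x$ is displaced according to $p_t^0(x,\cdot)$. Writing $\hat\psi_j := e^{i\psi_j}$ extended to $\{0,1\}$ by $1$, Fubini together with the balance identity absorb the $\mu_t^\lambda$ contribution and leave
\begin{align*}
F(\psi_1,\psi_2) = \exp\Bigl\{ & \int\bar\lambda(x)(\hat\psi_1(x)-1)\,dx + \int\bar\lambda(y)(\hat\psi_2(y)-1)\,dy \\
& {} + \iint\bar\lambda(x)\,q_t^0(x,y)(\hat\psi_1(x)-1)(\hat\psi_2(y)-1)\,dx\,dy\Bigr\}.
\end{align*}
The same computation for $G$ produces the identical expression with $\bar\lambda(x)\,q_t^0(x,y)$ replaced by $\bar\lambda(y)\,g_t^0(y,x)$, and the two kernels agree by the Doob identity; hence $F = G$.

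Reversibility is then an immediate corollary: if $\lambda_0 = \lambda_1$ then $\bar\lambda$ is constant, $\mathbf{Q}^0 = \mathbf{P}^0$, $\nu_t^\lambda = \mu_t^\lambda$, so $Q_t = P_t$ and $T_t^* = T_t$; conversely, $T_t = T_t^*$ forces the single-particle kernels $p_t^0$ and $g_t^0$ to coincide, making the drift $(\log\bar\lambda)'$ vanish identically. I expect the main technical hurdle to be only the careful bookkeeping of the Poisson characteristic functional when the ``old'' (Brownian-displaced) and the ``new'' (injected) particle families are tracked simultaneously; the two displayed kernel identities carry the conceptual weight of the proof.
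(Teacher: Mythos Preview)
Your proposal is correct and follows essentially the same route as the paper: compute the bivariate characteristic functionals of $\Pi_{\bar\lambda}(d\omega)P_t(\omega,d\eta)$ and of $\Pi_{\bar\lambda}(d\eta)Q_t(\eta,d\omega)$ and match them via the Doob-transform relation $g_t^0(x,dy)=\frac{\bar\lambda(y)}{\bar\lambda(x)}\,p_t^0(x,dy)$. The only notable difference is organizational: what the paper isolates as the separate identity $\mu_t^\lambda(1)=\nu_t^\lambda(1)$ (proved there by differentiating in $t$ and integrating by parts) is in your argument absorbed into the single ``balance identity'' coming directly from the harmonicity of $\bar\lambda$, and you also spell out the ``only if'' direction of reversibility, which the paper leaves implicit.
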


\begin{proof}
  It is enough to show  that for each $f,g\in L^2(\Omega,\Pi_{\bar\lambda})$ and $t\geq0$,
  \begin{equation*}
    \Pi_{\bar\lambda}\big( g \, T_t f) = 
    \Pi_{\bar\lambda}\big(f\,  T_t^*g).
  \end{equation*}
  By linearity and density, it suffices to consider the case in which
  $f(\omega)=\exp\{i\omega(\phi)\}$ and $g(\omega)=\exp\{i\omega(\psi)\}$
  for some $\phi,\psi\in C_K(0,1)$. That is 
  \begin{equation}
    \label{suff}
    \iint \!\Pi_{\bar\lambda}(d\omega) P_t(\omega,d\eta) 
    e^{i\omega(\psi)}e^{i\eta (\phi)} = 
      \iint \!\Pi_{\bar\lambda}(d\omega) Q_t(\omega,d\eta) 
      e^{i\omega(\phi)}e^{i\eta (\psi)}.  
  \end{equation}
  By the very definition \eqref{eq:pt} and standard properties of Poisson
  point process, the left hand side of \eqref{suff} is equal to
  \begin{equation*}
      \exp\Big\{ \mu_t^\lambda\big( e^{i\phi} -1\big) 
      -\int\!\bar\lambda(x)dx
      +\iint\!\bar\lambda(x)dx\, p^0_t(x,dy)\, e^{i\psi(x)}e^{i\phi(y)}
      \Big\},
   \end{equation*}
   while, by \eqref{pt*}, the right side is equal to
  \begin{equation*}
      \exp\Big\{ \nu_t^\lambda\big( e^{i\psi} -1\big)
      -\int\!\bar\lambda(x) dx
      + \iint\!\bar\lambda(x)dx\, g^0_t(x,dy)\, e^{i\phi(x)}e^{i\psi(y)}
      \Big\}.
   \end{equation*}
   Hence, \eqref{suff} is achieved by showing that 
    \begin{equation}
     \label{two} 
     \begin{split} &
     \iint \! \bar\lambda(x)dx\,  p^0_t(x,dy)\, e^{i\psi(x)}e^{i\phi(y)}
     \\&\quad  
     = \nu_t^\lambda\big(e^{i\psi}\big)-\mu_t^\lambda\big(e^{i\phi}\big)
     +\iint \!\bar\lambda(x)dx\, g^0_t(x,dy)\, e^{i\phi(x)}e^{i\psi(y)}\qquad
     \end{split}
     \end{equation}
   and
   \begin{equation}
     \label{one}
     \mu_t^\lambda(1) = \nu_t^\lambda(1).
   \end{equation}
   Simple computations shows that the transition function $g^0_t$ of
   the Markov family $\mathbf{Q}^0_x$ satisfies
   \begin{equation*}
     g^0_t(x,dy) = \frac {\bar\lambda(y)}{\bar\lambda(x)} \, p^0_t(x,dy),
   \end{equation*}
   so that \eqref{two} follows straightforwardly.

   To prove \eqref{one}, since it trivially holds for $t=0$, it suffices to show that $\frac{d}{dt}\big[ \mu_t^\lambda(1) -\nu_t^\lambda(1)\big]=0$.
   By using that $\mathbf{P}_x^0(\tau\leq t)=\mathbf{P}_x^0(\tau_0\leq t)+\mathbf{P}_x^0(\tau_1\leq t)$ and that $\mathbf{P}_x^0(\tau_i\leq t)$ solves the heat equation, an integration by parts and the symmetry  $\mathbf{P}_x^0(\tau_0\leq t)=\mathbf{P}_{1-x}^0(\tau_1\leq t)$ yield the claim.
\end{proof}

The next statement confirms the heuristic picture of the boundary driven Brownian gas of infinitely many particles entering and leaving the system in each time interval.
On the other hand, at any fixed positive time there are only finitely many particles in the system.

\begin{pro}
  Let $\omega\in\Omega$ and $t>0$.
  Then $\mathbb{P}_\omega(|\omega(t)|<+\infty)=1$ while $\mathbb{P}_\omega(\sup_{s\in[0,t]}|\omega(s)|<+\infty)=0$.
\end{pro}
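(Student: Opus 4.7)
The plan is to treat the two statements separately. For the first, I would decompose $\omega(t)$ according to the transition function \eqref{eq:pt}: under $\mathbb{P}_\omega$, $\omega(t)$ is distributed as $\eta_1+\eta_2$, with $\eta_1\sim\Pi_{\mu_t^\lambda}$ and $\eta_2\sim P_t^0(\omega,\cdot)$ independent. The total mass $|\eta_1|$ is Poisson with parameter $\mu_t^\lambda(0,1)\le\lambda_0+\lambda_1<+\infty$ (since $\mathbf{P}_x^0(\tau_i\le t)\le 1$), hence a.s.\ finite. Writing $\omega=\sum_k\delta_{x_k}$, the mass $|\eta_2|$ counts independent survivors, the $k$-th contributing with probability at most $m(x_k)/t$ by Markov's inequality applied to $\mathbf{E}_x^0(\tau)=m(x)$; summing gives $\mathbf{E}|\eta_2|\le\omega(m)/t<+\infty$, so $|\eta_2|<+\infty$ a.s.

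The second statement is genuinely pathwise. My plan is to exploit the continuous-path construction, either from the excursion process of a single Brownian on $(0,1)$ (mentioned in the introduction) or as the Poissonian limit of $n$ independent sticky Brownians (Theorem~\ref{t:mt}). The key input is that the underlying point process of boundary entries into $(0,1)$ has infinite total intensity, so infinitely many particles enter and leave in any time interval; one then identifies $|\omega(s)|$ with the running count in a shot-noise process whose arrival intensity (the excursion measure) has infinite mass, and shows that the sup of this count along $[0,t]$ is a.s.\ infinite.

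The main obstacle is to pass from ``many entries across $[0,t]$'' to ``many simultaneously present particles at some time''. A direct Markov-property argument along a partition $0=s_0<\cdots<s_n=t$ is insufficient: by \eqref{eq:pt} the independent Poisson sizes $A_k=|\eta_{1,k}|$ of freshly added particles at step $k$ satisfy $\mathbf{E} A_k=\mu_{s_{k+1}-s_k}^\lambda(0,1)\asymp\sqrt{s_{k+1}-s_k}$, and since each $A_k$ only records particles alive at the endpoint $s_{k+1}$, the short-lived excursions dominating the arrivals are invisible from this discrete-time viewpoint and a straightforward Borel--Cantelli argument keeps the individual $A_k$ bounded. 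The refined argument must therefore be based on the continuous pathwise construction, making essential use of the fact that $\omega\mapsto|\omega|$ is discontinuous in the $\Omega$-topology (as witnessed by the non-Feller Remark), so that the continuous path $s\mapsto\omega(s)$ can still visit configurations of arbitrarily large mass.
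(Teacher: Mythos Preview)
Your treatment of the first statement is correct and essentially identical to the paper's: bound $\mathbb{E}_\omega|\omega(t)|$ by $\mu_t^\lambda(0,1)+\sum_{x\in\omega}\mathbf{P}_x^0(\tau>t)\le(\lambda_0+\lambda_1)+\omega(m)/t$.

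For the second statement you have correctly located the difficulty but not resolved it; what you wrote is a discussion of obstacles, not a proof. The paper's argument supplies precisely the missing idea, and it is worth noting that the discrete-partition computation you dismiss is in fact its starting point. The paper does not try to make the increments $A_k$ large; it uses the partition only to show the much weaker fact that the process started from the empty configuration does not remain empty on $[0,t]$: by the Markov property,
\[
\mathbb{P}_0\big(|\omega(s)|=0,\ \forall s\in[0,t]\big)
=\lim_{n\to\infty}\exp\big\{-\lfloor nt\rfloor\,\mu_{1/n}^\lambda(0,1)\big\}=0,
\]
exactly because $\mu_{1/n}^\lambda(0,1)\asymp n^{-1/2}$, the scaling you computed. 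Combined with stochastic domination and the Markov property, this yields that in \emph{every} subinterval $[a,b]\subset[0,t]$ the process is nonempty at some time, $\mathbb{P}_\omega$-a.s.

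The decisive step you are missing is a \emph{superposition} (or thinning) argument: the process with parameter $\lambda$ started from $0$ has the same law as the sum of $\ell$ independent copies $\omega^1,\dots,\omega^\ell$ with parameter $\lambda/\ell$. Each copy satisfies the previous ``nonempty in every rational subinterval'' property. Since $s\mapsto\omega^k(s)(m^2)$ is continuous, the set $\{s:|\omega^k(s)|>0\}$ is open; one can therefore build, a.s., a nested sequence of rational intervals $I_1\supset I_2\supset\cdots\supset I_\ell$ with $|\omega^1(s)+\cdots+|\omega^j(s)|\ge j$ on $I_j$. At any point of $I_\ell$ the total mass is at least $\ell$, and $\ell$ was arbitrary. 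Your excursion-process heuristic can be made to work along the same lines (thinning the Poisson process of excursions plays the role of Step~2), but without this superposition/nesting mechanism there is no passage from ``infinitely many entries'' to ``unbounded simultaneous occupancy''.
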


\begin{proof}
  Observe that the map $\Omega\ni\omega\mapsto|\omega|\in\real$ is Borel as pointwise limit of continuous maps.
  The first statement follows from the stronger property $\mathbb{E}_\omega(|\omega(t)|) <+\infty$.
  In fact, by the very definitions \eqref{eq:mu} and \eqref{eq:pt},
  \begin{equation*}
    \begin{split}
          \mathbb{E}_\omega(|\omega(t)|)
    &=\int\Pi_{\mu_t^\lambda}(d\eta)|\eta|+\int P_t^0(\omega,d\eta)|\eta|
    =\mu_t^\lambda(0,1)+\sum_{x\in\omega}\mathbf{P}_x^0(\tau\leq t)\\
    & \leq\lambda_0+\lambda_1+\frac{1}{t}\sum_{x\in\omega}\mathbf{E}_x^0(\tau)
    =\lambda_0+\lambda_1+\frac{\omega(m)}{t}.
    \end{split}
  \end{equation*}
  \smallskip\noindent
  The proof of the second statement is split in few steps.
 
  \noindent\emph{Step 1.}
  If $t>0$ then $\mathbb{P}_0(|\omega(s)|=0,\ \forall s\in[0,t])=0$.
  
  Let us observe that the map $\Omega\ni\omega\mapsto\omega(m^2)\in\real$ is continuous and $|\omega|=0$ if and only if $\omega(m^2)=0$.
  Since the evaluation map $C([0,+\infty);\Omega)\ni\omega\mapsto\omega(s)\in\Omega$ is continuous, the condition $s\in[0,t]$ can be replaced by $s\in[0,t]\cap\rational$.
  Denote by $Q_n$ the set of points in $[0,t]$ of the form $k/n$, for some $k\in\natural$.
  Then by the Markov property and \eqref{eq:pt}
  \begin{equation*}
    \begin{split}
        &  \mathbb{P}_0(|\omega(s)|=0,\ \forall s\in[0,t])
    =\lim_{n\to+\infty}\mathbb{P}_0(|\omega(s)|=0,\ \forall s\in Q_n)\\
    &=\lim_{n\to+\infty}\prod_{s\in Q_n}P_{1/n}(0,\{0\})
    =\lim_{n\to+\infty}\Pi_{\mu_{1/n}^{\lambda}}(\{0\})^{\lfloor nt\rfloor}\\
    &=\lim_{n\to+\infty}\exp\{-\lfloor nt\rfloor\mu_{1/n}^{\lambda}(0,1)\}.
    \end{split}
  \end{equation*}
  As simple to check, $\lim_{s\downarrow 0}\sqrt{s}\mu_{s}^{\lambda}(0,1)>0$ which concludes the proof of this step.

    \noindent\emph{Step 2.}
    Let $\eta_1,\eta_2$ be two independent processes with distribution $\mathbb{P}_{\omega_i}$ with parameter $\lambda^i$, $i=1,2$.
    Then the distribution of the process $\eta_1+\eta_2$ is $\mathbb{P}_{\omega_1+\omega_2}$ with parameter $\lambda^1+\lambda^2$.

    The proof amounts to a straightforward computation that is omitted.

  \noindent\emph{Step 3.}
  If $0\leq a<b$ and $\omega\in\Omega$, then $\mathbb{P}_\omega(|\omega(s)|>0,\ \forall s\in[a,b])=1$.

  By Step 2, the process starting from $0$ is stochastically dominated, in the sense of Radon measures, by the one starting from $\omega$. 
  By Step 1, if $t>0$ then $\mathbb{P}_\omega(|\omega(s)|>0,\ \forall s\in[0,t])=1$.
  By the Markov property
  \begin{equation*}
    \begin{split}
    \mathbb{P}_\omega\Big(\sup_{s\in[a,b]}|\omega(s)|>0\Big)
   =\int P_a(\omega,d\eta) \mathbb{P}_\eta\Big(\sup_{s\in[0,b-a]}|\omega(s)|>0\Big)
    =1.      
    \end{split}
  \end{equation*}
  
    \noindent\emph{Conclusion.}
    If $t>0$ and $\ell\in\natural$, then $\mathbb{P}_\omega(\sup_{s\in[0,t]}|\omega(s)|\geq\ell)=1$.

    In view of Step 2 it suffices to consider the case $\omega=0$.
    Again by Step 2 the process with law $\mathbb{P}_0$ and parameter $\lambda$ can be realized as the sum of $\ell$ independent and identically distributed processes $\omega^k$, $k=1,\dots,\ell$, with law $\mathbb{P}_0$ and parameter $\lambda/\ell$.
    Denote by $\mathcal{I}$ the collection of intervals in $[0,t]$ with rational endpoints.
    In view of Step 3, with probability one for each $[a,b]\in\mathcal{I}$ there exists $s\in[a,b]$ such that $|\omega^k(s)|\geq1$.
    We next observe that for each $\omega\in C([0,+\infty);\Omega)$ the set $\{t\colon|\omega(t)|>0\}$ is an open subset of $[0,+\infty)$.
    Indeed, its complement is the zero level set of the continuous map $t\mapsto\omega(t)\,(m^2)$.

    By the previous observations, on a set of probability one there exist $s_1\in[0,t]$ such that $|\omega^1(s_1)|\geq1$ and
    $s_1\in I_1\in\mathcal{I}$ such that $|\omega^1(s)|\geq1$ for all $s\in I_1$.
    Next, again with probability one, there exist $s_2\in I_1$ such that $|\omega^2(s_2)|\geq1$ and $I_2\in\mathcal{I}$, $s_2\in I_2 \subset I_1$ 
    such that $|\omega^1(s)+\omega^2(s)|\geq2$ for all $s\in I_2$.
    By iterating this procedure we conclude the proof.
\end{proof}

\section{Other constructions and empirical flow}
\label{sec:fp}

In this section we present two alternative constructions of the boundary driven Brownian gas.
We then define, by a suitable limiting procedure, the empirical flow that counts the net amount of particles crossing a given point and describe explicitly its statistics.

\subsection{Construction from the excursion process}
\label{s:1.4}

A positive excursion of the Brownian motion is the part of the path $B(t)$, $t\in[t_1,t_2]$ such that $B(t_1)=B(t_2)=0$ and $B(t)>0$ for $t\in(t_1,t_2)$.
%The excursion theory describes the statistics of such excursions.
The excursion process describes the statistics of such excursions;
the Brownian motion can be recovered from it % the excursion process
by gluing, according to the local time, different excursions, see \emph{e.g.} \cite[\S 5.15]{Va}.
As we next show, the boundary driven Brownian gas can be naturally realized from the excursion process of a single Brownian motion.
Since we consider the boundary driven Brownian gas on a bounded interval, we need first to introduce the excursion process for a Brownian motion with absorption at the end points.
We remark however that had we considered the boundary driven Brownian gas on the positive half line, we would have only needed the excursion process of a standard Brownian motion.

The \emph{excursion process} for a Brownian motion on $[0,1]$ with absorption at the end-points is defined as follows.
Let $\wp$ be the $\sigma$-finite measure on $(0,+\infty)$ defined by
\begin{equation*}
  \wp(d\ell)
  =\lim_{\epsilon\downarrow0}\frac{1}{\epsilon}\mathbf{P}_\epsilon^0(\tau\in d\ell)
  =\lim_{\epsilon\downarrow0}\frac{1}{\epsilon}\mathbf{P}_{1-\epsilon}^0(\tau\in d\ell).
\end{equation*}
Let also $\mathbf{P}^{0,\ell}$ be the law of a right excursion from 0 of length $\ell$ and $\mathbf{P}^{1,\ell}$ be the law of a left excursion from 1 of length $\ell$, that is,
\begin{equation*}
  \begin{split}
      \mathbf{P}^{0,\ell}
 =\lim_{\epsilon\downarrow0}\mathbf{P}_\epsilon^0(\cdot|\tau=\ell)
 \qquad\qquad
      \mathbf{P}^{1,\ell}
 =\lim_{\epsilon\downarrow0}\mathbf{P}_{1-\epsilon}^0(\cdot|\tau=\ell).
  \end{split}
\end{equation*}
To define the excursion process we define the sets
\begin{equation*}
  \begin{split}
    \mathcal{E}_0&:=\Big\{(\ell,X), \ell\in(0,+\infty),\\ 
      &\qquad X\in C([0,\ell];[0,+\infty)):
      X(0)=X(\ell)=0,
      X(t)>0, t\in(0,\ell)
    \Big\}\\
    \mathcal{E}_1&:=\Big\{(\ell,X), \ell\in(0,+\infty),\\ 
      &\qquad X\in C([0,\ell];(-\infty,1]):
      X(0)=X(\ell)=1,
      X(t)<1, t\in(0,\ell)
    \Big\}.
  \end{split}
\end{equation*}
The excursion process, for the Brownian motion with absorption at the end-points is given by two independent Poisson point processes $\{(s_j^i,\ell_j^i,X_j^i)\}_{j\in\natural}$ on $[0,+\infty)\times\mathcal{E}_i$ with intensity measures $(\lambda_i/2)ds\,\wp(d\ell)d \mathbf{P}^{i,\ell}$, $i=0,1$.

Given $\Omega\ni\omega=\sum_k\delta_{x_k}$, let $B_k^0$ be $|\omega|$ independent Brownians on $[0,1]$ with absorption at the end-points starting from $x_k$, $k=1,\dots,|\omega|$.
Let finally $\zeta(t)$, $t\geq0$ be the process defined by
\begin{equation}
  \label{eq:11}
  \zeta(t)
 : =\sum_k\delta_{B_k^0(t)}+\sum_{j:0\leq t-s_j^0\leq\ell_j^0}\delta_{X_j^0(t-s_j^0)}+\sum_{j:0\leq t-s_j^1\leq\ell_j^1}\delta_{X_j^1(t-s_j^1)},
\end{equation}
that we regard as a random measure on $(0,1)$ understanding that if $x\in\{0,1\}$ then $\delta_x$ gives no weight to the right hand side of \eqref{eq:11}. 

\begin{teo}
  The law of the process $\zeta$ is $\bb P_\omega$.
\end{teo}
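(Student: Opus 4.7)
The plan is to verify that $\zeta$ is a Markov process with transition function $P_t$ and continuous paths; since continuous paths are immediate from the construction (each $B_k^0$ is continuous and each excursion is continuous on a bounded time interval), the content is in the identification of the finite-dimensional distributions, which by the Markov property reduces to showing $\zeta(t)\sim P_t(\omega,\cdot)$ for every $t\geq 0$ and starting configuration $\omega$.

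By the very definition \eqref{eq:pt}, $P_t(\omega,\cdot)$ is the convolution (sum of independent samples) of $P_t^0(\omega,\cdot)$ and $\Pi_{\mu_t^\lambda}$. Decompose $\zeta(t)=\zeta^{\mathrm{b}}(t)+\zeta^{\mathrm{e}}(t)$, where $\zeta^{\mathrm{b}}(t):=\sum_k\delta_{B_k^0(t)}$ and $\zeta^{\mathrm{e}}(t)$ is the sum of the two contributions from the excursion processes in \eqref{eq:11}. By construction the three families $\{B_k^0\}$, $\{(s_j^0,\ell_j^0,X_j^0)\}$, $\{(s_j^1,\ell_j^1,X_j^1)\}$ are independent, so $\zeta^{\mathrm{b}}(t)$ is independent of $\zeta^{\mathrm{e}}(t)$, and $\zeta^{\mathrm{b}}(t)$ has law $P_t^0(\omega,\cdot)$ by definition of $P_t^0$. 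It thus remains to show $\zeta^{\mathrm{e}}(t)\sim\Pi_{\mu_t^\lambda}$.

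By the Poisson mapping theorem, the image of the Poisson point process on $[0,+\infty)\times\mathcal{E}_i$ under the map $(s,\ell,X)\mapsto X(t-s)$ defined on $\{(s,\ell,X):0\leq t-s\leq\ell\}$ is a Poisson point process on $(0,1)$, and by independence the sum of the two images from $i=0,1$ is again Poisson. So the one remaining step is the identity, for $i=0$ (and symmetrically for $i=1$),
\begin{equation*}
\frac{\lambda_0}{2}\int_0^t\!du\int_{(u,+\infty)}\wp(d\ell)\,\mathbf{P}^{0,\ell}(X(u)\in dy)
=\lambda_0\,\mathbf{P}_y^0(\tau_0\leq t)\,dy.
\end{equation*}
This is the real crux. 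Using the definition $\wp(d\ell)\,\mathbf{P}^{0,\ell}(\cdot)=\lim_{\epsilon\downarrow 0}\epsilon^{-1}\mathbf{P}_\epsilon^0(\cdot,\tau\in d\ell)$, the left hand side for fixed $u=t-s$ equals $\lim_{\epsilon\downarrow 0}\epsilon^{-1}q_u^0(\epsilon,y)\,dy=\partial_x q_u^0(0+,y)\,dy$, where $q_u^0$ is the absorbed transition density. The classical formula $\mathbf{P}_y^0(\tau_0\in du)=\tfrac12\partial_x q_u^0(0+,y)\,du$ for absorbed Brownian motion (which comes from differentiating the heat equation satisfied by $u\mapsto\mathbf{P}_y^0(\tau_0\leq u)$ against $q_u^0(\cdot,y)$ and integrating by parts at the boundary) then yields the identity after integrating in $u$. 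The factor $\lambda_0/2$ in the intensity of the excursion process is calibrated precisely for this identity.

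Finally, for the Markov property along a grid $0=t_0<t_1<\cdots<t_n$, split each excursion Poisson process according to whether $s_j^i+\ell_j^i\leq t_1$, $s_j^i\leq t_1<s_j^i+\ell_j^i$, or $s_j^i>t_1$. The first class contributes nothing after time $t_1$; by the strong Markov property of Brownian motion, the continuation after $t_1$ of an in-progress excursion, conditionally on its value $X_j^i(t_1-s_j^i)$, is an absorbed Brownian motion; and the restriction of the remaining Poisson atoms to $(t_1,+\infty)$ is, by the Markov property of the Poisson point process, independent of the past and has the same intensity as the original. Hence conditionally on $\zeta(t_1)$, the process $\{\zeta(t_1+r)\}_{r\geq 0}$ has exactly the structure \eqref{eq:11} starting from $\zeta(t_1)$, giving the Markov property with the transition function $P_{t_1}$ computed above. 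Combined with the one-time distribution and the continuity of paths, this identifies the law of $\zeta$ with $\bb P_\omega$.
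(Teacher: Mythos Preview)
Your proof is correct and follows essentially the same route as the paper: decompose $\zeta(t)$ into the absorbed-Brownian part (giving $P_t^0(\omega,\cdot)$) and the excursion part (giving $\Pi_{\mu_t^\lambda}$), and reduce the latter to the key identity $\tfrac12\partial_x q_u^0(0{+},y)=\tfrac{d}{du}\mathbf{P}_y^0(\tau_0\le u)$, which the paper verifies via the explicit formulae in Borodin--Salminen. The only notable difference is that you spell out the Markov property via the three-way split of excursions (completed / in progress / future) and the simple Markov property of the excursion law, whereas the paper dismisses this as ``standard properties of Poisson processes''; your version is more informative but not a different argument.
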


\begin{proof}
  By standard properties of Poisson processes, $\zeta$ is Markovian. We next identify its transition function by showing that for each $t>0$ and $\omega\in\Omega$.
  \begin{equation}
    \label{eq:12}
    \zeta(t)
    \stackrel{\text{Law}}{=}P_t(\omega,\cdot)
  \end{equation}
  where the right hand side is defined in \eqref{eq:pt}.
  In view of the independence, \eqref{eq:mu}, standard properties of the Poisson process, and Lemma~\ref{lem:chf} it is enough to show that for each $\psi\in C_K(0,1)$
  \begin{equation*}
    \begin{split}
      \frac{1}{2}\int_0^tds\int_{t-s}^{+\infty}\wp(d\ell)\mathbf{E}^{0,\ell}\Big[e^{i\psi(X(t-s))}-1\Big]&=\int_0^1\!dx\,\mathbf{P}^0_x(\tau_0\leq t)\big[e^{i\psi(x)}-1\big]\\    \frac{1}{2}\int_0^tds\int_{t-s}^{+\infty}\wp(d\ell)\mathbf{E}^{1,\ell}\Big[e^{i\psi(X(t-s))}-1\Big]  
      &=\int_0^1\!dx\,\mathbf{P}^0_x(\tau_1\leq t)\big[e^{i\psi(x)}-1\big].
    \end{split}
   \end{equation*}
   We prove the first equation.
   By a change of variables it is equivalent to
   \begin{equation}
     \label{eq:14}
    \frac{1}{2} \int_t^{+\infty}\wp(d\ell) \mathbf{P}^{0,\ell}(X(t)\in dx)
    =\frac{d}{dt}\mathbf{P}_x^0(\tau_0\leq t)dx.
   \end{equation}
   We next observe that for any $\epsilon\in(0,1)$
   \begin{equation*}
     \mathbf{P}_\epsilon^0(X(t)\in dx)
     =\int_t^{+\infty}\mathbf{P}_\epsilon^0(\tau\in d\ell)\mathbf{P}_\epsilon^0(X(t)\in dx|\tau=\ell).
   \end{equation*}
   Therefore 
   \begin{equation*}
         \frac{1}{2} \int_t^{+\infty}\wp(d\ell) \mathbf{P}^{0,\ell}(X(t)\in dx)
         =\lim_{\epsilon\downarrow0}\frac{1}{\epsilon}\mathbf{P}_\epsilon^0(X(t)\in dx).
   \end{equation*}
   Denoting by $q_t^0$ the density of the absolutely continuous part of the transition probability of the Brownian motion with absorption at the endpoints, the proof of \eqref{eq:14} is achieved by showing
   \begin{equation*}
     \lim_{\epsilon\downarrow0}\frac{1}{2\epsilon}q_t^0(\epsilon,x)
      =\frac{d}{dt}\mathbf{P}_x^0(\tau_0\leq t),
      \qquad
      t>0.
   \end{equation*}
   This identity can be checked by comparing the explicit expression for the right hand side in \cite{Bo:Sa} $(2.1.4)$ $(1)$ with the representation of the left hand side obtained by the image method.
\end{proof}

\subsection{Graphical construction}
\label{sec:gc}

Since in any time interval infinitely many particles enters from the sources at the end points of the interval $(0,1)$ a full graphical construction of the boundary driven Brownian gas does not appear feasible.
Given $a\in(0,1/2)$, as we next show, it is possible to provide a graphical construction for the restriction of the process to the interval $[a,1-a]$.
We discuss such graphical construction for the stationary process only. 

Let $N^0=\{\sigma_k^0\}_{k\in\integer}$ and $N^1=\{\sigma_k^1\}_{k\in\integer}$ be two independent Poisson point processes on $\real$ with intensity $\lambda_0/(2a)$ and $\lambda_1/(2a)$ respectively.
At each time in $\sigma_k^0\in N^0$ (respectively $\sigma_k^0\in N^1$) we let $\{B^0_k(t)\}_{t\geq\sigma_k^0}$ (respectively $\{B^1_k(t)\}_{t\geq\sigma_k^1}$) be a Brownian motion on $(0,1)$ with absorption at the endpoints and initial datum $B^0_k(\sigma_k^0)=a$ (respectively $B^1_k(\sigma_k^1)=1-a$).
All these Brownians are independent and independent from the Poisson point processes.
We define 
\begin{equation}
  \label{eq:8}
  \omega^a(t)
  :=\sum_{k\colon\sigma_k^0\leq t}\delta_{B^0_k (t)}+\sum_{k\colon\sigma_k^1\leq t}\delta_{B^1_k (t)},
  \qquad\qquad
  t\in\real.
\end{equation}
By standard properties of Poisson processes, $\sup_{t\in\real}|\omega^a(t)|<+\infty$ a.s.
The law of $\omega^a$ is denoted by $\mathbb{P}^a$ that we consider as a probability on $C(\real,\Omega)$.
Let also $\Omega^a$ be the set of integer valued Radon mesures on $[a,1-a]$ and observe that $\Omega^a$ is naturally embedded in $\Omega$.

\begin{teo}
  \label{teo:gc}
  Fix $\lambda\in\real_+^2$ and let $\mathbb{P}_{\Pi_{\bar\lambda}}$ be the stationary process associated to the Markov family $\{\mathbb{P}_\omega\}_{ \omega\in\Omega}$.
  The restrictions of $\mathbb{P}^a$ and $\mathbb{P}_{\Pi_{\bar\lambda}}$ to $C(\real,\Omega^a)$ coincide.
\end{teo}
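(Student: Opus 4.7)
The plan is to compare the finite-dimensional characteristic functionals of the two laws. By Lemma~\ref{lem:chf} and the stationarity of both laws, it suffices, for any $0\le t_1<\dots<t_n$ and any $\psi_1,\dots,\psi_n\in C_K(0,1)$ supported in $[a,1-a]$, to show
\begin{equation*}
\mathbb{E}^a\exp\Big(i\sum_j\omega^a(t_j)(\psi_j)\Big)
=\mathbb{E}_{\Pi_{\bar\lambda}}\exp\Big(i\sum_j\omega(t_j)(\psi_j)\Big).
\end{equation*}

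For the left-hand side, $\omega^a$ is built from two independent marked Poisson processes of Brownian trajectories starting at $a$ and at $1-a$, and Campbell's formula yields $\exp\{\Phi^0+\Phi^1\}$ with
\begin{equation*}
\Phi^i:=\frac{\lambda_i}{2a}\int_{\real}ds\,\big(F^i(s)-1\big),\qquad F^i(s):=\mathbf{E}^0_{x_i}\exp\Big(i\sum_j\mathbf{1}_{t_j\ge s}\psi_j(B(t_j-s))\Big),
\end{equation*}
where $x_0:=a$, $x_1:=1-a$, and $\psi_j$ is extended by $0$ at the absorbed state.

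For the right-hand side, I would use the bilateral version of the excursion construction of Subsection~\ref{s:1.4}, i.e.\ two independent Poisson point processes on $\real\times\mathcal{E}_i$ with intensities $(\lambda_i/2)\,ds\,\wp(d\ell)\,d\mathbf{P}^{i,\ell}$, $i=0,1$. Adapting the argument of Subsection~\ref{s:1.4} together with a direct Campbell computation showing that the time-$0$ marginal is $\Pi_{\bar\lambda}$, this bilateral construction realizes $\mathbb{P}_{\Pi_{\bar\lambda}}$ on $C(\real,\Omega)$. Another application of Campbell's formula gives $\exp\{\Psi^0+\Psi^1\}$ with
\begin{equation*}
\Psi^i:=\frac{\lambda_i}{2}\int_{\real}ds\int\wp(d\ell)\,\big(G^i(s,\ell)-1\big),\qquad G^i(s,\ell):=\mathbf{E}^{i,\ell}\exp\Big(i\sum_j\mathbf{1}_{s\le t_j\le s+\ell}\psi_j(X(t_j-s))\Big).
\end{equation*}

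The match $\Phi^i=\Psi^i$ reduces to an excursion-theoretic identity. Using $\wp(d\ell)\,d\mathbf{P}^{0,\ell}=\lim_{\epsilon\downarrow0}(1/\epsilon)\mathbf{P}_\epsilon^0$ (from the definition of $\wp$) and that $\psi_j$ vanishes outside $[a,1-a]$ and at the cemetery, $G^0(s,\ell)-1$ is supported on $\{T_a\le\ell\}$ and depends on the path only through $X(T_a+\cdot)$. By the strong Markov property at the first-passage time $T_a$ of level $a$, conditionally on $\{T_a<\tau\}$ the shifted path $X(T_a+\cdot)$ has the law of a Brownian motion from $a$ with absorption at $\{0,1\}$. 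Using Fubini, the translation invariance of $ds$ on $\real$ to absorb the shift by $T_a$, and the classical identity $\mathbf{P}_\epsilon^0(T_a<\tau)=\epsilon/a$, one obtains
\begin{equation*}
\int_{\real}ds\int\wp(d\ell)\,\big(G^0(s,\ell)-1\big)=\frac{1}{a}\int_{\real}ds\,\big(F^0(s)-1\big),
\end{equation*}
hence $\Psi^0=\Phi^0$, and symmetrically $\Psi^1=\Phi^1$. The main obstacle is the bilateral excursion setup (not explicitly treated in Subsection~\ref{s:1.4}) and the interchange of $\int ds$, $\int\wp(d\ell)$, and the strong-Markov decomposition; this interchange is justified by Fubini once one checks integrability, which follows from the exponential decay of $\mathbf{P}_a^0(\tau>t)$ as $t\to+\infty$ rendering $F^0(s)-1$ integrable over $s\in\real$.
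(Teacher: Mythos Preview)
Your argument is correct, but it follows a genuinely different route from the paper's.

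The paper, like you, reduces to matching finite-dimensional characteristic functionals (invoking Lemmata~\ref{lem:pis} and~\ref{lem:chf}), and it also computes $\mathbb{E}^a[\cdots]$ via Campbell's formula for the Poisson processes of Brownians entering at $a$ and $1-a$. From there, however, the paper proceeds by direct computation: on the $\mathbb{P}^a$ side it collapses the time integrals using the explicit Green function $\int_0^\infty q_t^0(x,y)\,dt$ of the Dirichlet Laplacian on $(0,1)$ (equation~\eqref{eq:13}), obtaining a closed expression independent of $a$; on the $\mathbb{P}_{\Pi_{\bar\lambda}}$ side it works straight from the transition function $P_t$ in~\eqref{eq:pt} rather than from excursions, and the match is completed by showing that a certain remainder $R(\psi_2)$ vanishes because $u+v$ solves the heat equation on $(0,1)$ with zero Dirichlet data and zero initial condition.

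Your approach is more structural: you realize $\mathbb{P}_{\Pi_{\bar\lambda}}$ via a \emph{bilateral} version of the excursion construction of Subsection~\ref{s:1.4}, and then the identity $\Phi^i=\Psi^i$ drops out of the strong Markov property at $T_a$ together with the gambler's-ruin probability $\mathbf{P}_\epsilon^0(T_a<\tau)=\epsilon/a$ and translation invariance of $ds$. This explains conceptually \emph{why} the restriction to $[a,1-a]$ coincides with a Poisson source at $a$ and $1-a$ with the stated rates, and it avoids both the explicit Green function and the PDE argument. The price you pay is that the bilateral excursion construction is not established in the paper; you would need to verify (as you outline) that its time-$0$ marginal is $\Pi_{\bar\lambda}$ and that it is Markov with transition function $P_t$. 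Both follow from the one-sided result in Subsection~\ref{s:1.4}, the identity~\eqref{eq:14}, and the Poisson structure, but they should be stated and checked. Your Fubini justification is fine once you also note that $F^i(s)-1=0$ for $s>t_n$, so only the tail $s\to-\infty$ needs the exponential decay of $\mathbf{P}^0_{x_i}(\tau>t)$.
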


We notice that this graphical construction implies a Burke type theorem for the boundary driven Brownian gas, see \cite{Fe:Fo}, for a discussion about Burke theorem in the context of interacting particles systems on the lattice.
For instance, in the reversible case $\lambda_0=\lambda_1$, Theorem~\ref{teo:gc} implies, by a time reversal argument, the following statement.
Under the stationary process, the distribution of the times of last visit of the point $a\in(0,1/2)$ is a Poisson point process of parameter $\lambda_0/(2a)$.

\begin{proofof}{Theorem~\ref{teo:gc}}
By Lemma~\ref{lem:pis} it is enough to show that the finite dimensional distributions of the restriction to $C(\real,\Omega^a)$ of $\mathbb{P}^a$ and $\mathbb{P}_{\Pi_{\bar\lambda}}$ coincide.
By Lemma~\ref{lem:chf} it is enough to show that for each $t_1<\dots<t_n$ and each $\psi_1,\dots,\psi_n:(0,1)\to\real$, bounded mesurable and vanishing on $(0,1)\setminus[a,1-a]$,
\begin{equation*}
  \mathbb{E}^a\Big[\exp\Big\{i\sum_{j=1}^n\omega(t_j)(\psi_j)\Big\}\Big]
  =  \mathbb{E}_{\Pi_{\bar\lambda}}\Big[\exp\Big\{i\sum_{j=1}^n\omega(t_j)(\psi_j)\Big\}\Big].
\end{equation*}

To keep combinatorics simple we discuss only the case $n=2$.
By standard properties of Poisson point processes,
% \begin{equation*}
%   \begin{split}
%       &\mathbb{E}^a\Big[e^{i\omega(t_1)(\psi_1)+i\omega(t_2)(\psi_2)}\Big]\\
%        & =\lim_{T\to+\infty}
%           \sum_{n_0,m_0}e^{-\frac{\lambda_0 T}{2a}-\frac{\lambda_0
%            (t_2-t_1)}{2a}}\frac{\big(\frac{\lambda_0
%            T}{2a}\big)^{n_0}\big(\frac{\lambda_0
%            (t_2-t_1)}{2a}\big)^{m_0}}{n_0!m_0!}\\
%        &\qquad\quad \Big(\frac{1}{T} \int_{t_1-T}^{t_1}\!dt\iint
%        p^0_{t_1-t}(a,dx_1)
%        p^0_{t_2-t_1}(x_1,dx_2)e^{i\psi_1(x_1)+i\psi_2(x_2)}\Big)^{n_0} \\
%        &\qquad \quad \Big(\frac{1}{t_2-t_1}\int_{t_1}^{t_2}\!dt\int
%        p^0_{t_2-t}(a,dx_2) e^{i\psi_2(x_2)}\Big)^{m_0} \\
%        &\quad \times   \sum_{n_1,m_1}
%        e^{-\frac{\lambda_1 T}{2a}-\frac{\lambda_1(t_2-t_1)}{2a}}
%        \frac{\big(\frac{\lambda_1 T}{2a}\big)^{n_1}
%          \big(\frac{\lambda_1 (t_2-t_1)}{2a}\big)^{m_1}}{n_1!m_1!}\\
%        &\qquad\quad  \Big(\frac{1}{T}\int_{t_1-T}^{t_1}\!dt \iint
%        p^0_{t_1-t}(1-a,dx_1)
%        p^0_{t_2-t_1}(x_1,dx_2)e^{i\psi_1(x_1)+i\psi_2(x_2)}\Big)^{n_1} \\
%        &\qquad\quad  \Big(\frac{1}{t_2-t_1}\int_{t_1}^{t_2}\!dt
%        \int p^0_{t_2-t}(1-a,dx_2) e^{i\psi_2(x_2)}\Big)^{m_1}
%      \end{split}
% \end{equation*}
%whence
\begin{equation*}
  \begin{split}
    &\log\mathbb{E}^a\Big[e^{i\omega(t_1)(\psi_1)+i\omega(t_2)(\psi_2)}\Big]\\
    &=\frac{\lambda_0}{2a} \Big[ 
    \int_{-\infty}^{t_1}\!dt \iint p^0_{t_1-t}(a,dx_1)
    p^0_{t_2-t_1}(x_1,dx_2)\big(e^{i\psi_1(x_1)+i\psi_2(x_2)} -1\big) \\
    &\qquad \quad \int_{t_1}^{t_2}\!dt \int
    p^0_{t_2-t}(a,dx_2) \big(e^{i\psi_2(x_2)}-1\big)\Big]\\
    &+\frac{\lambda_1}{2a} \Big[ 
    \int_{-\infty}^{t_1}\!dt \iint p^0_{t_1-t}(1-a,dx_1)
    p^0_{t_2-t_1}(x_1,dx_2)\big(e^{i\psi_1(x_1)+i\psi_2(x_2)} -1\big) \\
    &\qquad \quad \int_{t_1}^{t_2}\!dt \int
    p^0_{t_2-t}(1-a,dx_2) \big(e^{i\psi_2(x_2)}-1\big)\Big].
  \end{split}
\end{equation*}
We write 
\begin{equation}
  \label{eq:15}
  p_t^0(x,dy)
  =q_t^0(x,y)dy+\mathbf P_x^0(\tau_0\le t) \delta_0(dy)
  +\mathbf P_x^0(\tau_1\le t) \delta_1(dy)
\end{equation}
and observe that $\int_0^{\infty}\!dt\, q^0_t(x,y)$ is the double of the Green function of the Dirichlet Laplacian on $(0,1)$.
Hence
\begin{equation}
  \label{eq:13}
  \int_0^{\infty}\!dt\, q^0_t(x,y)=
  \begin{cases}
    2\, x (1-y) & \textrm{ if $x\le y$}\\
    2\, (1-x)y & \textrm{ if $x> y$.}
  \end{cases}
\end{equation}
% and set $g^0(x,y) :=\int_0^{\infty}\!dt\, q^0_t(x,y)$. Then $g^0$ is
% the Green function of the Dirichelt Laplacian on $(0,1)$; it is
% explicitly given by
% \begin{equation*}
%   g^0(x,y)=
%   \begin{cases}
%     2\, x (1-y) & \textrm{ if $x\le y$}\\
%     2\, (1-x)y & \textrm{ if $x> y$.}
%   \end{cases}
% \end{equation*}

By writing 
\begin{equation}
\label{alg}
  e^{i\psi_1(x_1)+i\psi_2(x_2)} -1 =
  \big(e^{i\psi_1(x_1)} -1\big) \big(e^{i\psi_2(x_2)} -1\big) 
  + e^{i\psi_1(x_1)} -1 + e^{i\psi_2(x_2)} -1 
\end{equation}
and using that $e^{i\psi_j}-1$ vanishes on $(0,1)\setminus (a,1-a)$
together with the Chapman-Kolmogorov equation for $\{p^0_t\}_{t\ge 0}$ and \eqref{eq:13}, few
computations yield
\begin{equation*}
  \begin{split}
    &\log\mathbb{E}^a\Big[e^{i\omega(t_1)(\psi_1)+i\omega(t_2)(\psi_2)}\Big]\\
    &=\int\!dx \, \bar\lambda (x) \Big\{     
    e^{i\psi_1(x)}+ e^{i\psi_2(x)}-2 + 
    \big(e^{i\psi_1(x)} -1\big)  \int\! dy \, q^0_t(x,y)
    \big(e^{i\psi_2(y)} -1\big) 
    \Big\},
  \end{split}
\end{equation*}
where we recall that $\bar\lambda(x)=\lambda_0(1-x)+\lambda_1 x$.
Observe in particular that the right hand side does not depend on $a$.

On the other hand, by using \eqref{eq:pt} 
\begin{equation*}
  \begin{split}
    &\mathbb{E}_{\Pi_{\bar\lambda}}\Big[\exp\Big\{i\omega(t_1)(\psi_1)+i\omega(t_2)(\psi_2)\Big\}\Big]\\ 
    &=\iiint\Pi_{\bar\lambda}(d\eta_1)\Pi_{\mu_{t_2-t_1}^\lambda}(d\eta_{21})P_{t_2-t_1}^0(\eta_1,d\eta_{22})e^{i(\eta_1(\psi_1)+\eta_{21}(\psi_2)+\eta_{22}(\psi_2))}\\
    &=\exp\Big\{\mu_{t_2-t_1}^\lambda(e^{i\psi_2}-1)
    +\iint \!dx_1\,\bar\lambda(x_1)p_{t_2-t_1}^0(x_1,dx_2)
    \big[e^{i\psi_1(x_1)+i\psi_2(x_2)}-1\big]\Big\}
  \end{split}
\end{equation*}
whence, using again \eqref{alg},
\begin{equation*}
  \begin{split}
    &\log \mathbb{E}_{\Pi_{\bar\lambda}}
    \Big[\exp\Big\{i\omega(t_1)(\psi_1)+i\omega(t_2)(\psi_2)\Big\}\Big]
    \\
    &\!=\int\!dx \, \bar\lambda (x) \Big\{ e^{i\psi_1(x)}+
    e^{i\psi_2(x)}-2 + \big(e^{i\psi_1(x)} -1\big)\! \!\int\! dy \,
    q^0_{t_2-t_1}(x,y) \big(e^{i\psi_2(y)} -1\big) \Big\}\\
    & \qquad + R(\psi_2)
  \end{split}
\end{equation*}
where
\begin{equation*}
  \begin{split}
    R(\psi_2):= &\mu_{t_2-t_1}^\lambda(e^{i\psi_2}-1) - \int\!dx
    \bar\lambda(x) \big(e^{i\psi_2(x)}-1\big) 
    \\ 
    & +\iint \!dx_1\,dx_2\,
    \bar\lambda(x_1)q_{t_2-t_1}^0(x_1,x_2)
    \big(e^{i\psi_2(x_2)}-1\big).
  \end{split}
\end{equation*}
It remains to show that $R(\psi_2)=0$.
Recalling \eqref{eq:mu}, set 
\begin{equation*}
  u(t,x) := \lambda_0 \mathbf P_x^0(\tau_0\le t) + 
  \lambda_1 \mathbf P_x^0(\tau_1\le t) -\bar\lambda(x) 
\end{equation*}
and
\begin{equation*}
  v(t,x) := \int\!dy \, q^0_{t}(x,y) \bar\lambda(y).
\end{equation*}
By using $q^0_t(x,y)=q^0_t(y,x)$, we get
\begin{equation*}
  R(\psi_2) = \int\!dx\, \big[ u(t_2-t_1,x)+ v(t_2-t_1,x) \big]
  \big(e^{i\psi_2(x)} -1 \big).
\end{equation*}
As simple to check, the function $w:= u+v$ solves the heat equation on
the interval $(0,1)$ with Dirichlet boundary conditions at the
endpoints and initial datum $w(0,x)=0$. Hence $w=0$. 
\end{proofof}

\subsection{Empirical flow}
\label{s:1.5}

Given a point $x\in(0,1)$ and a time interval $[0,t]$, we would like to
define the (integrated) empirical flow at $x$ as the difference between the number of particles that in
the time interval $[0,t]$ have crossed $x$ from left to right and
the ones that crossed from right to left.
Due to the Brownian nature of the paths, the above naive definition is not feasible and some care is
needed. Instead of the point $x$ we shall consider the small interval
$(x-\epsilon, x+\epsilon)$ and count the number of left/right, respectively right/left, crossing of this interval. We then take the limit 
$\epsilon\to 0$ obtaining a well defined real process $J^x(t)$ whose
law will be identified for the stationary process. For $t$ large,
$J^x(t)$ essentially behave as the difference of two independent
Poisson processes of parameters $\lambda_0/2$ and $\lambda_1/2$.

Given $x\in(0,1)$ and $0<\epsilon< x\wedge(1-x)$ we define the real
process $J_\epsilon^x(t)$, $t\ge 0$, according to the following algorithm. 
We need three collections of tokens respectively labelled $\odot$, $\ominus$,
and $\oplus$, together an integer valued counter.  

The counter is initialized at $0$ and to each particle
starting in $(x-\epsilon,x+\epsilon)$ is given a $\odot$-token 
(the crossings of these particles will not be accounted for). 

At $x-\epsilon$ there is a $\ominus$-booth operating with the following
directives, applying to each particle crossing $x-\epsilon$: 
\begin{itemize}
\item[-]particles having no token are given a $\ominus$-token,
\item[-]particles having either $\odot$-token or a $\ominus$-token are ignored,
\item[-]particles having a $\oplus$-token are deprived of their token, given
  a $\ominus$-token, and the counter is decreased by one.
\end{itemize}

Analogously, at $x+\epsilon$ there is a
$\oplus$-booth operating with the following directives, applying to each
particle crossing $x+\epsilon$:  
\begin{itemize}
\item[-]particles having no token are given a $\oplus$-token,
\item[-]particles having either $\odot$-token or a $\oplus$-token are ignored,
\item[-]particles having a $\ominus$-token are deprived of their token, given
  a $\oplus$-token, and the counter is increased by one.
\end{itemize}
We then define $J^x_\epsilon(t)$ as the value of the counter at time
$t$.
By standard properties of Brownians, this defines a.s.\ a real process $J^x_\epsilon$.

The next result identifies the limiting law of $J^x_\epsilon$ as $\epsilon\to0$.
We refer to \cite{Fe:Fo} for a similar result in the context of interacting particles system on the lattice.

\begin{teo}
  \label{t:fl}
  Let the path $\omega=\omega (t)$ be sampled according to the
  stationary process $\bb P_{\Pi_{\bar\lambda}}$ and fix $x\in(0,1)$. 
  There exists real process $J^x$ such that, with probability one, for
  any $T>0$
  \begin{equation}
    \label{eq:7}
    \lim_{\epsilon\to 0} 
    \sup_{t\in[0,T]}\big| J^x_\epsilon(t)- J^x (t)\big|=0.     
  \end{equation}
  Moreover,
  \begin{equation}
    \label{eq:9}
    J^x \stackrel{\mathrm{Law}}{=} N^{01}-N^{10} +R^x
  \end{equation}
  where $N^{01}$ and $N^{10}$ are independent Poisson processes of parameter $\lambda_0/2$ and
  $\lambda_1/2$, while $R^x(t)=Y^x(t)-Y^x(0)$ where $Y^x$ is a stationary process satisfying the following bound.
  There exist constants $c, \ell_0>0$ depending on $\lambda$ such that for any $x\in(0,1)$, $t>0$, and $\ell\geq\ell_0$
  \begin{equation}
    \label{eq:10}
    \bb P_{\Pi_{\bar\lambda}} \big( \big|Y^x(t)\big|> \ell \big)\le 
    \exp\{- c \, \ell\}.
  \end{equation}
\end{teo}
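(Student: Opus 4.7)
The natural approach is to build everything from the stationary graphical construction of Theorem~\ref{teo:gc}. Fix $a\in(0,x\wedge(1-x))$ and take $\epsilon<\min\{x-a,(1-a)-x\}$, so that the two booths at $x\pm\epsilon$ lie strictly inside $[a,1-a]$ and $J^x_\epsilon(t)$ is a functional of $\omega^a$. Each graphical particle carries an entry time $\sigma_k$, an absorption time $\tau_k$, an entry point in $\{a,1-a\}$, and a continuous trajectory $B_k$; I write $J^x_\epsilon(t)=\sum_k j^k_\epsilon(t)$ for the decomposition of the counter into single-particle contributions. A direct inspection of the token rules shows that $j^k_\epsilon(t)\in\{-1,0,1\}$, and that as $\epsilon\downarrow 0$ it converges pointwise a.s.\ to
\[
  j^k(t):=\mathbf{1}_{B_k(t\wedge\tau_k)>x}-\mathbf{1}_{B_k(0\vee\sigma_k)>x},
\]
with the convention that absorption at $1$ (resp.\ at $0$) makes the relevant indicator equal to $1$ (resp.\ $0$). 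Setting $J^x(t):=\sum_k j^k(t)$, the sum is a.s.\ finite on $[0,T]$ because only those particles that actually reach the level $x$ while alive on $[0,T]$ contribute, and the expected number of such particles is finite by hitting-probability estimates combined with the Green function identity \eqref{eq:13}.

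Given $J^x$, I classify each particle by source (left at $a$ vs.\ right at $1-a$) and by where its entry and absorption times fall relative to $\{0,t\}$. Collecting the resulting cases yields the identity
\[
  J^x(t)=N^{01}(t)-N^{10}(t)+Y^x(t)-Y^x(0),
\]
where $N^{01}(t)$ and $N^{10}(t)$ count respectively the left-source particles absorbed at $1$ and the right-source particles absorbed at $0$ during $(0,t]$, and
\[
  Y^x(s):=\#\{k\in N^0\colon\sigma_k\le s<\tau_k,\,B_k(s)>x\}-\#\{k\in N^1\colon\sigma_k\le s<\tau_k,\,B_k(s)<x\}.
\]
A left-source Brownian starting at $a$ is absorbed at $1$ with probability $a$; thinning the entry Poisson process $N^0$ of rate $\lambda_0/(2a)$ thus produces a Poisson process of rate $\lambda_0/2$, and the displacement theorem applied to the iid conditional lifetimes yields that $N^{01}$ is itself Poisson of rate $\lambda_0/2$. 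The same argument gives $N^{10}$ Poisson of rate $\lambda_1/2$, and the independence of $N^{01}$ and $N^{10}$ follows from that of the two sources $N^0,N^1$.

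For the correction term, marking each entry Poisson process makes the two summands defining $Y^x(s)$ Poisson random variables; a Campbell formula combined with \eqref{eq:13} computes their means as $\lambda_0(1-x)^2/2$ and $\lambda_1 x^2/2$. These two counts are independent because they are measurable functionals of the independent sources $N^0,N^1$, and $Y^x$ is stationary by the stationarity of $\mathbb{P}_{\Pi_{\bar\lambda}}$. The exponential estimate \eqref{eq:10} then follows from the standard Chernoff bound for each Poisson summand; since both means are dominated by $(\lambda_0+\lambda_1)/2$ independently of $x\in(0,1)$, the constants $c,\ell_0$ can be chosen uniform in $x$.

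The main obstacle is the uniform convergence \eqref{eq:7}. Pointwise convergence $j^k_\epsilon(t)\to j^k(t)$ holds outside the measure-zero set $\{t\colon B_k(t)=x\}$, but a single Brownian motion visits $x$ on a Cantor-like time set and inside that set the counter $j^k_\epsilon$ oscillates between its two admissible values. The proof must therefore be carried out sample by sample, coupling $j^k_\epsilon$ with $j^k$ so that they disagree only while $B_k(t)\in(x-\epsilon,x+\epsilon)$, and controlling the sup of the signed contribution from the finitely many relevant particles simultaneously inside the strip. This requires finer path properties of the absorbed Brownian motion near the level $x$, and is the delicate technical point of the proof.
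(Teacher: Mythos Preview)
Your route through the stationary graphical construction is the same as the paper's, but the decomposition you reach is genuinely different. The paper does \emph{not} index $N^{01}$ by absorption times; instead it marks each entry by the eventual absorption endpoint of the corresponding Brownian, so that the \emph{entry} times $\{\sigma^{01}_k\}$ of left-source particles destined for $1$ already form a Poisson process of rate $\lambda_0/2$ (no displacement theorem needed), and sets $N^{01}(t):=|\{k:\sigma^{01}_k\in[0,t]\}|$. The matching remainder is then a four-term combination $Y^x=Y^x_{00}-Y^x_{01}+Y^x_{10}-Y^x_{11}$, where $Y^x_{ij}(t)$ counts source-$i$, destination-$j$ particles alive at time $t$ and on the ``wrong'' side of $x$; these four counts are mutually independent by the marking, which makes the exponential-moment step immediate. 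Your two-term $Y^x$ with $N^{01}$ based on absorption times is algebraically simpler and the mean computation via \eqref{eq:13} is very clean; both decompositions satisfy the requirements of the statement.

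On \eqref{eq:7} your instinct that this is the delicate point is right, but your diagnosis is inverted. The approximation $j^k_\epsilon$ is piecewise constant, changing only at the finitely many booth-action times; it is your candidate limit $j^k(t)=\mathbf 1_{B_k(t\wedge\tau_k)>x}-\mathbf 1_{B_k(0\vee\sigma_k)>x}$ that oscillates on the level set $\{t:B_k(t)=x\}$. Since a uniform limit of c\`adl\`ag step functions is c\`adl\`ag, \eqref{eq:7} cannot hold for this pointwise-defined $J^x$, and the same objection applies verbatim to the explicit $J^x$ written in the paper. The paper handles the passage to the limit differently: it first asserts a closed formula for $J^x_\epsilon(t)$ depending only on the positions $B^i_k(0),B^i_k(t)$ relative to $x\pm\epsilon$ and the entry data, and then ``takes the limit $\epsilon\to0$'' without further argument, so the uniformity in $t$ is not justified there either. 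For \eqref{eq:9} and \eqref{eq:10} only the value of $J^x(t)$ at each fixed $t$ is used, and for that your argument is correct.
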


\begin{proof}
  Pick $a\in(0,1/2)$ such that $(x-\epsilon,x+\epsilon)\subset\subset(a,1-a)$.
  We realize the stationary process $\omega$ in the strip $(a,1-a)$ according to the graphical construction discussed in Section~\ref{sec:gc}. 
  Recalling \eqref{eq:8}, for $s\in[0,t]$, we write
  \begin{equation*}
      \omega^a(s)
  :=\sum_{i=0}^1\sum_{k\colon\sigma_k^i\leq s}\delta_{B_k^{i}(s)}
  =\sum_{i=0}^1\Big(\sum_{k\colon\sigma_k^i\leq 0}\delta_{B_k^{i}(s)}+\sum_{k\colon0<\sigma_k^i\leq s}\delta_{B_k^{i}(s)}\Big),
  \end{equation*}
  where $\{\sigma^0_k\}_{k\in\integer}$, $\{\sigma^1_k\}_{k\in\integer}$ are two independent Poisson point processes with parameters $\lambda_0/(2a)$, $\lambda_1/(2a)$ and $B_k^0$ respectively $B_k^1$ are independent Brownians on $[0,1]$ with absorption at the end-points starting at time $\sigma_k^0$ at $a$ respectively $\sigma_k^1$ at $1-a$. 
As in Section~\ref{sec:gc}, the law of $\omega^a$ is denoted by $\bb P^a$.
Observe that the process $J_\epsilon^x$ can be obtained from $\omega^a$ only.
  
By the very definition of $J_\epsilon^x(t)$, a straightforward tokens bookkeeping yields
  \begin{equation*}
    \begin{split}
          J_\epsilon^x(t)
    =&\sum_{i=0}^1\big|\big\{k:\sigma_k^i\leq0, B_k^i(0)\in(0,x-\epsilon), B_k^i(t)\in(x+\epsilon,1]\big\}\big|\\
    &-\sum_{i=0}^1\big|\big\{k:\sigma_k^i\leq0, B_k^i(0)\in(x+\epsilon,1), B_k^i(t)\in[0,x-\epsilon)\big\}\big|\\
        &+\big|\big\{k:\sigma_k^0\in(0,t), B_k^0(t)\in(x+\epsilon,1]\big\}\big|\\
        &-\big|\big\{k:\sigma_k^1\in(0,t), B_k^1(t)\in[0,x-\epsilon)\big\}\big|.
    \end{split}
  \end{equation*}
  By taking the limit $\epsilon\to0$ we get  \eqref{eq:7} with 
  \begin{equation*}
    \begin{split}
         & J^x(t)
    =\sum_{i=0}^1\big|\big\{k:\sigma_k^i\leq0, B_k^i(0)\in(0,x), B_k^i(t)\in(x,1]\big\}\big|\\
    &\;\;-\sum_{i=0}^1\big|\big\{k:\sigma_k^i\leq0, B_k^i(0)\in(x,1), B_k^i(t)\in[0,x)\big\}\big|\\
        &\;\;+\big|\big\{k:\sigma_k^0\in(0,t), B_k^0(t)\in(x,1]\big\}\big|-\big|\big\{k:\sigma_k^1\in(0,t), B_k^1(t)\in[0,x)\big\}\big|.
    \end{split}
  \end{equation*}

  We now mark the points of $\{\sigma^0_k\}_{k\in\integer}$ according to the absorption end-point of the Brownian started at time $\sigma_k^0$.
  Namely we denote by $\{\sigma^{00}_k\}_{k\in\integer}$ the starting times of the Brownians eventually absorbed at $0$ and by $\{\sigma^{01}_k\}_{k\in\integer}$ the starting times of the Brownians eventually absorbed at $1$.
  These marks are inherited by the Brownians starting at the times $\{\sigma^0_k\}_{k\in\integer}$ which will be denoted by  $\{B^{00}_k\}_{k\in\integer}$ and  $\{B^{01}_k\}_{k\in\integer}$.
  Then $\{\sigma^{00}_k\}_{k\in\integer}$ and $\{\sigma^{01}_k\}_{k\in\integer}$ are independent Poisson point processes of parameters $(1-a)\lambda_0/(2a)$ and $\lambda_0/2$, while $B^{00}_k$,  $B^{01}_k$ are independent Brownians on $[0,1]$ with absorption at the end-points started at time $\sigma^{00}_k$, $\sigma^{01}_k$ in $a$ conditioned to be absorbed at 0, 1 respectively.
  The analogous definitions and notation is used for the Poisson point processes $\{\sigma^1_k\}_{k\in\integer}$ and the corresponding Brownians.

  We now set
  \begin{equation*}
    N^{01}(t)
    :=\big|\big\{k:\sigma_k^{01}\in[0, t]\big\}\big|
    \qquad
        N^{10}(t)
    :=\big|\big\{k:\sigma_k^{10}\in[0, t]\big\}\big|
  \end{equation*}
  and
  \begin{equation*}
        \begin{split}
          &R^x(t)
          =\sum_{i=0}^1\sum_{j=0}^1\Big(\big|\big\{k:\sigma_k^{i j}<0, B_k^{i j}(0)\in(0,x), B_k^{i j}(t)\in(x,1]\big\}\big|\\
          &\quad\;-\big|\big\{k:\sigma_k^{i j}<0, B_k^{i j}(0)\in(x,1), B_k^{i j}(t)\in[0,x)\big\}\big|\Big)\\
        &\;+\big|\big\{k:\sigma_k^{00}\in[0, t], B_k^{00}(t)\in(x,1)\big\}\big|-\big|\big\{k:\sigma_k^{01}\in[0, t], B_k^{01}(t)\in(0,x)\big\}\big|\\
        &\;-\big|\big\{k:\sigma_k^{11}\in[0, t], B_k^{11}(t)\in(0,x)\big\}\big|+\big|\big\{k:\sigma_k^{10}\in[0, t], B_k^{10}(t)\in(x,1)\big\}\big|.
    \end{split}
  \end{equation*}
  Then \eqref{eq:9} holds.
  It remains to analyze $R^x$.
  Set
  \begin{equation*}
    \begin{split}
      Y_{00}^x(t)&:=\big|\big\{k:\sigma_k^{00}\leq t, B_k^{00}(t)\in (x,1)\big\}\big|\\
      Y_{01}^x(t)&:=\big|\big\{k:\sigma_k^{01}\leq t, B_k^{01}(t)\in (0,x)\big\}\big|\\
      Y_{10}^x(t)&:=\big|\big\{k:\sigma_k^{10}\leq t, B_k^{10}(t)\in (x,1)\big\}\big|\\
      Y_{11}^x(t)&:=\big|\big\{k:\sigma_k^{11}\leq t, B_k^{11}(t)\in (0,x)\big\}\big|.
    \end{split}
  \end{equation*}
  Observe that these processes are independent and, as simple to check, stationary.
  A straightforward computation shows that
  \begin{equation*}
    R^x(t)
    =[Y^x_{00}(t)-Y^x_{00}(0)]-[Y^x_{01}(t)-Y^x_{01}(0)]+[Y^x_{10}(t)-Y^x_{10}(0)]-[Y^x_{11}(t)-Y^x_{11}(0)],
  \end{equation*}
  so that, by setting $Y^x=Y^{00}-Y^{01}-Y^{10}+Y^{11}$, it holds $R^x(t)=Y^x(t)-Y^x(0)$.
  The bound \eqref{eq:10} is finally derived by computing the exponential moments of $Y_{i j}^x(t)$, $i,j\in\{0,1\}$ and using the exponential Chebyshev inequality.
\end{proof}

\section{Poissonian limit of sticky Brownians}
\label{sec:3}

In this section we obtain the boundary driven Brownian gas by considering the Poissonian limit of independent sticky Brownian motions on the interval $[0,1]$.
As a byproduct of this convergence, we deduce the continuity of the paths stated in Theorem~\ref{t:mt}.

\subsection{Two-sided sticky Brownian motion}
\label{2sided}
 
We here introduce the two-sided sticky Brownian on the interval $[0,1]$.
Although its construction is analogous to the one of the sticky Brownian on $[0,+\infty)$, see e.g.\ \cite{Va}, we outline the general strategy and provide the details that are relevant for our purposes.

We start by introducing the Skorokhod problem on the interval $[0,1]$.
Given a function $u\in C[0,+\infty)$ such that $u(0)\in[0,1]$ a
triple of continuous functions $(v,a_0,a_1)$, where
$0\leq v\leq1$ and $a_0,a_1$ are increasing solves the
Skorokhod problem on the interval $[0,1]$ if and only if
$v=u+a_0-a_1$, $a_0(0)=a_1(0)=0$, and the measure
$da_i$ is carried on the set $\{t:v(t)=i\}$, $i=0,1$.

As shown in \cite{Kr:Le:Ra:Sh} there exists a unique solution to this
problem given by $v=\Gamma(u)$, $a_i=A_i(u)$ where the
maps $\Gamma$ and $A_i$ are explicitly constructed, $i=1,0$.  Moreover,
these maps are continuous in the uniform topology and progressively
measurable namely, the values of $v$ and $a_i$ at time $t$
depend only on the values of $u$ at the times $[0,t]$.

The Brownian motion on the interval $[0,1]$ with elastic reflection at
the endpoints and initial condition $y\in[0,1]$ can then be defined as
$Y=\Gamma(B)$ where $B$ is a standard Brownian motion on $\real$
starting from $y$.

Given $\theta=(\theta_0,\theta_1)\in(0,+\infty)^2$, following \cite{Va} we
define the continuous strictly increasing process
\begin{equation*}
  \sigma(t)=t+\frac{1}{\theta_0}a_0(t)
  +\frac{1}{\theta_1}a_1(t),
  \qquad t\geq 0
\end{equation*}
and denote by $\sigma^{-1}$ its inverse.  
The \emph{two sided $\theta$-sticky Brownian motion} on $[0,1]$ with
initial condition $x\in[0,1]$ is then defined by $X(t)=Y(\sigma^{-1}(t))$,
$t\geq0$, where $Y$ is the Brownian motion on the interval $[0,1]$
with elastic reflection on the endpoints and initial condition $x$.

Arguing as in \cite{Va}, an application of Itô's formula shows that
for each $f\in C^2[0,1]$
\begin{equation}
  \label{eq:mfs}
  \begin{split}
    & M^f(t) :=f(X(t))-f(X(0))\\
    &\quad-\int_0^t\!ds\,\Big[\frac{1}{2}f''(X(s))\ind_{(0,1)}(X(s))+\theta_0
      f'(0)\ind_{\{0\}}(X(s))-\theta_1
      f'(1)\ind_{\{1\}}(X(s))\Big]
  \end{split}
\end{equation}
is a continuous martingale with quadratic variation
\begin{equation}
  \label{eq:mfs2}
  [M^f](t)
  =\int_0^t\!ds\,f'(X(s))^2\ind_{(0,1)}(X(s)).
\end{equation}

By using \eqref{eq:mfs}, routine manipulations yield that $X$ is a Feller
process on the state space $[0,1]$ with generator $\mathcal{L}$ given by $\mathcal{L}f=f''/2$ on the domain
\begin{equation*}
  \mathcal{D}(\mathcal{L})
  =\left\{f\in C^2[0,1]\colon 2\theta_0 f'(0)=f''(0),\ 2\theta_1
    f'(1)=-f''(1)\right\}. 
\end{equation*}
We denote by $\mathbf{P}_x^\theta$ the law of a two-sided
$\theta$-sticky Brownian motion started at $x\in[0,1]$.  Observe that
in the limit $\theta\to0$ this process converges to the Brownian in
$[0,1]$ with absorption at the endpoints, so that the notation is
consistent with Section~\ref{sec:con}.

The resolvent equation for $\mathcal{L}$ has the form of a
Sturm-Liouville problem on the interval $[0,1]$ so that it is possible
to obtain an explicit expression for the resolvent kernel $r_\lambda$,
\begin{equation}
  \label{eq:04}
  r_\lambda(x,dy)
  =
  \begin{cases}
    g_\lambda(x,y)dy+g_\lambda^0(x)\delta_1(dy) & x\leq y \\
    g_\lambda(y,x)dy+g_\lambda^1(x)\delta_0(dy) & x> y
  \end{cases}
\end{equation}
where
\begin{equation*}
  \begin{split}
    g_\lambda(x,y)
    & =2 W^{-1}\left(2\theta_0\ch(\sqrt{2
        \lambda}x)+\sqrt{2\lambda}\sh(\sqrt{2\lambda}x)\right)\\ 
    &\times\left(2\theta_1\ch(\sqrt{2\lambda}(1-y))+
      \sqrt{2\lambda}\sh(\sqrt{2\lambda}(1-y))\right)   
    \\
    g_\lambda^0(x)
    &=2W^{-1}\left(2\theta_0\ch(\sqrt{2 \lambda}x)
      +\sqrt{2\lambda}\sh(\sqrt{2\lambda}x)\right)\\ 
    g_\lambda^1(x)
    &=2W^{-1}\left(2\theta_1\ch(\sqrt{2 \lambda}(1-x))
      +\sqrt{2\lambda}\sh(\sqrt{2\lambda}(1-x))\right) 
  \end{split}
\end{equation*}
in which
\begin{equation*}
  W
  =4\lambda(\theta_0+\theta_1)\ch\sqrt{2\lambda}+2\sqrt{2\lambda}
  (2\theta_0\theta_1+\lambda)\sh\sqrt{2\lambda}. 
\end{equation*}

According to \eqref{eq:04}, the transition function of a two-sided
$\theta$-sticky Brownian can be written as
\begin{equation*}
  p_t(x,dy)
  =q_t(x,y)dy+q^1_t(x)\delta_0(dy)+q^0_t(x)\delta_1(dy),
\end{equation*}
where the Laplace transform of $q_t$, $q_t^0$, and $q_t^1$ are
$g_\lambda$, $g_\lambda^0$, and $g_\lambda^1$, respectively. 

We conclude this subsection with two technical lemmata.

\begin{lemma}
  \label{lem:st}
  Given $T>0$ there exists $C>0$ such that for any $t\in[0,T]$ and
  $\theta\in(0,+\infty)^2$
  \begin{equation}
    \label{eq:st1}
    \begin{split}
         & \sup_{y\in(0,1)}|q_t(0,y)-2\theta_0 \mathbf{P}^0_y(\tau_0\leq t)|
    \leq C\|\theta\|^2\\
         & \sup_{y\in(0,1)}|q_t(1,y)-2\theta_1 \mathbf{P}^0_y(\tau_1\leq t)|
    \leq C\|\theta\|^2.
    \end{split}
  \end{equation}
  Moreover, for $b\in\{0,1\}$,
    \begin{equation}
    \label{eq:st2}
          1-p_t(b,\{b\})\leq C\|\theta\|
          \qquad
         p_t(b,\{1-b\})\leq C\|\theta\|.
  \end{equation}
\end{lemma}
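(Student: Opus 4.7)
The plan is to establish \eqref{eq:st2} first, starting from the explicit Laplace transforms in \eqref{eq:04}, and then to bootstrap \eqref{eq:st1} via a weak formulation of the heat equation for the difference $r_t:=q_t(0,\cdot)-2\theta_0\mathbf{P}_\cdot^0(\tau_0\le t)$ together with the parabolic maximum principle.

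For \eqref{eq:st2}, setting $\alpha=\sqrt{2\lambda}$, a direct algebraic manipulation of the resolvent entries in \eqref{eq:04} gives
\begin{equation*}
\int_0^{\infty}\!e^{-\lambda t}\bigl[1-p_t(0,\{0\})\bigr]\,dt=\tfrac{1}{\lambda}-g_\lambda^1(0)=\frac{4\theta_0(\alpha\ch\alpha+2\theta_1\sh\alpha)}{\alpha W},\qquad\int_0^{\infty}\!e^{-\lambda t}p_t(0,\{1\})\,dt=g_\lambda^0(0)=\frac{4\theta_0}{W},
\end{equation*}
each of which carries an explicit factor $\theta_0$ in the numerator. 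Combined with the crude lower bound $W\ge\alpha^3\sh\alpha$ (valid because every summand defining $W$ is non-negative), this furnishes Laplace-domain bounds of order $O(\theta_0)$. To upgrade these into pointwise-in-$t$ bounds on $[0,T]$, I would exploit either the monotonicity of $t\mapsto p_t(b,\{b\})$ (which follows from Chapman--Kolmogorov together with the property that $y\mapsto p_t(y,\{b\})$ attains its maximum on $[0,1]$ at $y=b$, itself obtained via a monotone coupling of sticky Brownians), or a direct inverse Laplace calculation leveraging the rapid decay of $1/(\alpha^3\sh\alpha)$ at large $\alpha$. The case $b=1$ is symmetric.

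For \eqref{eq:st1}, I would take expectations under $\mathbf{P}_0^\theta$ in the martingale identity \eqref{eq:mfs} against test functions $f\in C^2[0,1]$ with $f(0)=f(1)=0$. Using $\mathbf{E}_0^\theta[f(X(t))]=\int_0^1 f(y)q_t(0,y)\,dy$, one obtains
\begin{equation*}
\int_0^1\!f(y)\,q_t(0,y)\,dy=\int_0^t\!\Bigl[\tfrac12\!\int_0^1 f''(y)q_s(0,y)\,dy+\theta_0 f'(0)p_s(0,\{0\})-\theta_1 f'(1)p_s(0,\{1\})\Bigr]ds.
\end{equation*}
A parallel identity for $u(t,y):=2\theta_0\mathbf{P}_y^0(\tau_0\le t)$, which solves $\partial_t u=\tfrac12\partial_y^2 u$ on $(0,1)$ with boundary data $u(t,0)=2\theta_0$, $u(t,1)=0$ and zero initial datum, is obtained by two integrations by parts. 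Subtracting, the difference $r_t(y):=q_t(0,y)-u(t,y)$ satisfies, in the weak sense against test functions vanishing at $\{0,1\}$, the heat equation $\partial_s r=\tfrac12\partial_y^2 r$ on $(0,1)$ with zero initial condition and Dirichlet boundary values
\begin{equation*}
r_s(0)=-2\theta_0\bigl[1-p_s(0,\{0\})\bigr],\qquad r_s(1)=2\theta_1\,p_s(0,\{1\}).
\end{equation*}
By \eqref{eq:st2}, both boundary values are bounded in absolute value by $C\|\theta\|^2$; the parabolic maximum principle then yields $\sup_{y\in(0,1),\,s\in[0,T]}|r_s(y)|\le C\|\theta\|^2$, which is \eqref{eq:st1} for $b=0$, and the case $b=1$ follows by symmetric arguments.

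The main obstacle is the Tauberian-type step in the proof of \eqref{eq:st2}: the pointwise Laplace-domain bound $O(\theta_0)$ does not translate directly into a pointwise bound in $t$, and a naive inverse Laplace comparison fails because the inverse Laplace transform is not monotone under dominance. The restriction to a bounded time window $t\in[0,T]$ is essential, since $p_t(0,\{1\})$ tends as $t\to\infty$ to a positive limit of order $\theta_0/(\theta_0+\theta_1)$, which is not small in $\|\theta\|$. Monotonicity of $t\mapsto p_t(b,\{b\})$ provides the cleanest bypass; otherwise a careful small-$t$ expansion of the inverse Laplace transform is needed to handle both \eqref{eq:st2} and the hand-off into the maximum-principle step.
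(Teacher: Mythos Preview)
Your approach is sound and genuinely different from the paper's. The paper treats both \eqref{eq:st1} and \eqref{eq:st2} uniformly by bounding the resolvent on the Bromwich line $\Re(\lambda)=1$: for instance, it shows directly that
\[
\sup_{y\in(0,1)}\Big|g_\lambda(0,y)-\theta_0\,\partial_{\theta_0}g_\lambda(0,y)\big|_{\theta=0}\Big|\le \frac{C\|\theta\|^2}{|\lambda|^{3/2}}
\]
(and analogous $|\lambda|^{-3/2}$ bounds for $g_\lambda^0(1)-\lambda^{-1}$ and $g_\lambda^0(0)$), and then inverts by integrating along $\Re(\lambda)=1$. Everything is reduced to an explicit complex-analytic estimate on the resolvent. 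Your route instead first settles \eqref{eq:st2} by a Tauberian/monotonicity argument and then bootstraps \eqref{eq:st1} via the parabolic maximum principle for the heat equation satisfied by $r_t$. The boundary identities $q_t(0,0)=2\theta_0\,p_t(0,\{0\})$ and $q_t(0,1)=2\theta_1\,p_t(0,\{1\})$ are correct (they can be read off from $g_\lambda(0,0)=2\theta_0 g_\lambda^1(0)$ and $g_\lambda(0,1)=2\theta_1 g_\lambda^0(0)$), so once \eqref{eq:st2} is in hand the maximum-principle step is clean and conceptually appealing.

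The place where your argument is softest is exactly where you flag it: converting the Laplace-domain $O(\theta_0)$ bound for $1-p_t(0,\{0\})$ into a pointwise bound on $[0,T]$. Your monotonicity route does work, but you should note that the quickest justification is not via coupling but via reversibility of the sticky Brownian with respect to $dx+(2\theta_0)^{-1}\delta_0+(2\theta_1)^{-1}\delta_1$, which gives $p_t(0,\{0\})$ as a diagonal heat-kernel value and hence a non-increasing function of $t$ by spectral decomposition; then $f(T)\le \lambda e^{\lambda T}\hat f(\lambda)$ for any fixed $\lambda>0$ closes the bound. The second estimate in \eqref{eq:st2} then follows trivially from $p_t(b,\{1-b\})\le 1-p_t(b,\{b\})$. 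The alternative you mention (``direct inverse Laplace calculation leveraging the rapid decay'') is precisely what the paper does, and is slightly shorter since it needs no auxiliary monotonicity lemma. In exchange, your argument explains structurally why the $\|\theta\|^2$ in \eqref{eq:st1} arises: one factor of $\|\theta\|$ comes from the boundary data of $r_t$ being proportional to $\theta_b$, and the second from \eqref{eq:st2}.
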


\begin{proof}
  We start by proving the first bound in \eqref{eq:st1}.
  By an explicit computation
  \begin{equation*}
    \begin{split}
      &\frac{\partial }{\partial\theta_0}g_\lambda(0,y)\Big|_{\theta=0}
      =\frac{2\sh(\sqrt{2\lambda}(1-y))}{\lambda\sh\sqrt{2\lambda}}\\
            &\frac{\partial }{\partial\theta_1}g_\lambda(0,y)\Big|_{\theta=0}
      =0.
    \end{split}
  \end{equation*}
  We now claim that there exists a constant $C$ such that for any $\lambda\in\complex$ with  $\Re(\lambda)=1$ and $\theta\in(0,+\infty)^2$:
  \begin{equation}
    \label{eq:lam}
    \sup_{y\in(0,1)}\left|g_\lambda(0,y)-\theta_0\frac{\partial }{\partial\theta_0}g_\lambda(0,y)\Big|_{\theta=0}\right|
    \leq \frac{C\|\theta\|^2}{|\lambda|^{3/2}}.
  \end{equation}
  By \cite[Eq. 2.2.4 (1)]{Bo:Sa}, $\sh(\sqrt{2\lambda}(1-y))/(\lambda\sh\sqrt{2\lambda})$ is the Laplace transform of $P^0_y(\tau_0\leq t)$.
  The statement follows.

  To prove \eqref{eq:lam}, by an explicit computation, recalling \eqref{eq:04},
  \begin{equation*}
    \begin{split}
         & g_\lambda(0,1-y)-\theta_0\frac{\partial }{\partial\theta_0}g_\lambda(0,1-y)\Big|_{\theta=0}\\
         & =\frac{4\theta_0\Big[\theta_1\sqrt{\lambda}\ch(y\sqrt{2\lambda})-\Big(\sqrt{2}\theta_0\theta_1+(\theta_0+\theta_1)\sqrt{\lambda}\cth(\sqrt{2\lambda})\Big)\sh(y\sqrt{2\lambda})\Big]}{\lambda^{3/2}\sh(\sqrt{2\lambda})\Big[\sqrt{2}(2\theta_0\theta_1+\sqrt{\lambda})+2(\theta_0+\theta_1)\cth(\sqrt{2\lambda})\Big]}.
    \end{split}
  \end{equation*}
 By using that for $\Re(\lambda)=1$ we have $\sqrt{2\lambda}=\sqrt{|\lambda|+1}+i\sign(\Im(\lambda))\sqrt{|\lambda|-1}$, $|\cth(\sqrt{2\lambda})|$ is bounded, and
  \begin{equation*}
    \sup_{y\in(0,1)}\Big|\frac{\ch(y\sqrt{2\lambda})}{\sh(\sqrt{2\lambda})}\Big|+\sup_{y\in(0,1)}\Big|\frac{\sh(y\sqrt{2\lambda})}{\sh(\sqrt{2\lambda})}\Big|
    \leq C,
  \end{equation*}
  for some $C>0$ independent of $\lambda$, a straightforward computation yields \eqref{eq:lam}.

  The second bound in \eqref{eq:st1} is obtained by symmetry.
  To prove \eqref{eq:st2} it is enough to show that there exists a constant $C$ such that for any $\lambda\in\complex$ with  $\Re(\lambda)=1$ and $\theta\in(0,+\infty)^2$:
  \begin{equation}
    \label{eq:lam1}
          \left|g_\lambda^0(1)-\frac{1}{\lambda}\right|\leq \frac{C\|\theta\|}{|\lambda|^{3/2}},
          \qquad\qquad
          \left|g_\lambda^0(0)\right|\leq \frac{C\|\theta\|}{|\lambda|^{3/2}}.
  \end{equation}
  This follows by direct computations.
\end{proof}

\begin{lemma}
  \label{lem:st1}
  Given $T>0$ and $\psi\in C_0(0,1)$ or $\psi=\ind_{(0,1)}$ there exists $C>0$ such that for any $t\in[0,T]$ and $\theta\in(0,+\infty)^2$
  \begin{equation*}
    \sup_{x\in(0,1)}\big|\E_x^\theta[\psi(X(t))]-\E_x^0[\psi(X(t))]\big|
    \leq C\|\theta\|.
  \end{equation*}
\end{lemma}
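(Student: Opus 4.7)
The plan is to reduce the difference $\mathbf{E}_x^\theta[\psi(X(t))] - \mathbf{E}_x^0[\psi(X(t))]$ to expectations starting at the boundary via the strong Markov property, and then to bound those boundary expectations using Lemma~\ref{lem:st}.

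First I would observe that before the $\theta$-sticky Brownian motion first touches $\{0,1\}$, the local-time terms $a_0$ and $a_1$ entering the definition of $\sigma$ vanish identically, so $\sigma(t)=t$ and the process coincides pathwise with a standard Brownian motion. In particular, up to $\tau:=\inf\{s\ge 0\colon X(s)\in\{0,1\}\}$ the laws $\mathbf{P}_x^\theta$ and $\mathbf{P}_x^0$ agree on the natural filtration, and for $i\in\{0,1\}$
\[
\mathbf{P}_x^\theta\bigl(\tau\in ds,\, X(\tau)=i\bigr)=\mathbf{P}_x^0(\tau_i\in ds).
\]
Since both $\psi\in C_0(0,1)$ and $\psi=\ind_{(0,1)}$ vanish at $0$ and $1$, on the event $\{\tau\le t\}$ the absorbed process gives $\psi(X(t))=0$, while on $\{\tau>t\}$ the two laws agree. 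Applying the strong Markov property at $\tau$ under $\mathbf{P}_x^\theta$ I obtain
\[
\mathbf{E}_x^\theta[\psi(X(t))]-\mathbf{E}_x^0[\psi(X(t))]
=\sum_{i=0,1}\int_0^t \mathbf{P}_x^0(\tau_i\in ds)\,\mathbf{E}_i^\theta[\psi(X(t-s))].
\]

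The task then reduces to bounding $|\mathbf{E}_b^\theta[\psi(X(u))]|$ uniformly in $u\in[0,T]$ and $b\in\{0,1\}$. Because $\psi(0)=\psi(1)=0$, the atoms $q_u^0(b)$ and $q_u^1(b)$ of the sticky transition function $p_u(b,dy)$ contribute nothing, so
\[
|\mathbf{E}_b^\theta[\psi(X(u))]|
\le \|\psi\|_\infty\int q_u(b,y)\,dy
=\|\psi\|_\infty\bigl(1-p_u(b,\{0\})-p_u(b,\{1\})\bigr).
\]
By \eqref{eq:st2} of Lemma~\ref{lem:st}, this last quantity is at most $C\|\theta\|$ for $\|\theta\|\le 1$; for $\|\theta\|\ge 1$ the bound $|\mathbf{E}_b^\theta[\psi(X(u))]|\le\|\psi\|_\infty\le\|\psi\|_\infty\|\theta\|$ is trivial. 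Combining these estimates with $\mathbf{P}_x^0(\tau_i\le t)\le 1$ delivers the claim with a constant depending only on $T$ and $\psi$.

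The essential analytic input has already been packaged into Lemma~\ref{lem:st}, so no additional calculation on the kernels is needed beyond the total-mass bound \eqref{eq:st2}; the only subtle point is verifying that the coincidence of the laws before $\tau$ and the vanishing of $\psi$ at the endpoints genuinely allow the reduction to expectations started at $0$ and $1$.
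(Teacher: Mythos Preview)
Your proof is correct and follows essentially the same route as the paper's: both use that the sticky and absorbed processes agree up to $\tau$ together with $\psi(0)=\psi(1)=0$ to reduce the difference to $\E_x^\theta[\psi(X(t)),\tau\le t]$, then apply the strong Markov property at $\tau$ and bound the resulting boundary expectations $\E_b^\theta[|\psi(X(u))|]$ via Lemma~\ref{lem:st}. Your added case split $\|\theta\|\ge 1$ is harmless but unnecessary, since Lemma~\ref{lem:st} is stated for all $\theta\in(0,+\infty)^2$.
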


\begin{proof}
  Since $\psi (0)=\psi(1)=0$ and the two-sided $\theta$-sticky
  Brownian coincides with the Brownian with absorption at the boundary
  until the processes reaches the boundary,
  \begin{equation*}
    \begin{split}
      & \E_{x}^{\theta}\big[ \psi(X(t)) \big]-
      \E_{x}^{0}\big[\psi(X(t)) \big] =\E_{x}^{\theta}\big[\psi(X(t)),\tau\leq t\big].
    \end{split}
  \end{equation*}
  By the strong Markov property and Lemma~\ref{lem:st},
  \begin{equation*}
    \begin{split}
      & \Big|\E_{x}^{\theta}\big[\psi(X(t)),\tau\leq t\big]\Big|
      \leq\int_0^t\! \mathbf{P}^\theta_x(\tau\in ds,X(\tau)=0)\,
      \E_0^\theta \big[\big|\psi(X(t-s))\big|\big]\\
      &\quad +\int_0^t\! \mathbf{P}^\theta_x(\tau\in ds,X(\tau)=1)\, \E_1^\theta
      \big[\big|\psi(X(t-s))\big|\big] \leq C \| \psi\|_\infty
      \|\theta\|,
    \end{split}
  \end{equation*}
  which completes the proof.
\end{proof}

\subsection{Convergence to the boundary driven  Brownian gas}

Given $n\in\natural$, we introduce the \emph{empirical measure}
as the map $\pi_n\colon [0,1]^n\to \Omega$ defined by 
\begin{equation*}
  \pi_n (x_1,\ldots,x_n) := 
  \sum_{k=1}^n \delta_{x_k},
\end{equation*}
where we understand that if $x_k\in \{0,1\}$ then it gives no weight
to the right hand side.
For $T>0$, with a slight abuse of notation, we denote by $\pi_n$ also the map
from $C([0,T];[0,1]^n)$ to $C([0,T];\Omega)$ defined by  
$\pi_n (x_1,\ldots,x_n) (t) :=\pi_n (x_1(t),\ldots,x_n(t))$, $t\in[0,T]$. 

We are going to consider the empirical measure of $n$ independent
sticky Brownians and consider the limit $n\to \infty$ when the
stickiness parameter $\theta=\theta_n$ vanishes in such a way that
$n\theta_n\to \lambda$. In order to obtain the convergence to the
boundary driven  Brownian gas the initial datum has to be suitably
prepared. Let $\omega\in \Omega$ be the initial datum for the
boundary driven  Brownian gas and consider first the case in which
$\omega$ has finite mass so that $\omega=\sum_{k=1}^A \delta_{x_k}$
for some $A\in\natural$ and $x_1,\ldots,x_A\in (0,1)$.  Then, the
initial datum for the $n$ independent sticky Brownians is chosen as
follows. The first $A$ particles start at the points $x_1,\ldots,x_A$,
half of the remaining $n-A$ start at $0$, and the other half at $1$.
In this situation, as the stickiness parameter is of order $1/n$, the
particles initially in $(0,1)$ will essentially perform a Brownian
motion with absorption at the end-points.  On the other hand, again by
the scaling of the stickiness parameter, out of the approximately
$n/2$ particles initially at the end-point $\{0\}$ essentially only a
finite number will be able to move inside $(0,1)$ by a strictly
positive distance independent of $n$. Together with the analogous
mechanism at the end-point $\{1\}$, this will produce the effect of the boundary reservoirs in the limiting process.

We now describe how the initial state is prepared in the general
situation in which the mass of $\omega$ is possibly unbounded.
Given $\omega\in\Omega$ and $n\in\natural$ let
$x^n=x^n(\omega)=(x_1^n,\dots,x_n^n)\in[0,1]^n$ be the following triangular array.
Choose a sequence $a_n\downarrow0$ such that
$A_n:=\omega(a_n,1-a_n)=o(n)$.  
Define $x^n_k$, $k=1,\dots,A_n$ so that
$\omega^n:=\omega(\cdot\,\cap(a_n,1-a_n))=\sum_{k=1}^{A_n}\delta_{x_k^n}$.
Define also $x_k^n=0$, $k=A_n+1,\dots,A_n+\lfloor(n-A_n)/2\rfloor$,
and $x_k^n=1$, $k=A_n+\lfloor(n-A_n)/2\rfloor+1,\dots,n$.
Observe that, as simple to check, $\omega^n\to \omega$ in $\Omega$.

The Poissonian limit of independent sticky Brownians is then stated in the next theorem in which $\Omega$ is endowed with the topology
introduced in Section~\ref{sec:con} and $C([0,T];\Omega)$ with the corresponding uniform topology. 

\begin{teo}
  \label{teo:conv}
  Given $\omega\in\Omega$ and $n\in\natural$, set $\bb{P}^n_\omega:=\big(\prod_{k=1}^n\mathbf{P}_{x_k^n}^{\theta_n}\big)\circ\pi_n^{-1}$ where the initial data $x^n=x^n(\omega)$ are as above and $\theta_n=(\lambda_0/n,\lambda_1/n)$, $\lambda_0,\lambda_1>0$.
  For each $T>0$ the sequence $\{\mathbb{P}_\omega^n\}$, as probabilities on $C([0,T];\Omega)$, converges weakly to $\mathbb P_\omega$.
\end{teo}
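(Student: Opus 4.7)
The plan is to prove weak convergence by combining tightness of $\{\bb P_\omega^n\}$ on $C([0,T];\Omega)$ with identification of the limit via finite-dimensional characteristic functionals. The non-Polish topology on $\Omega$ forces one to work inside the compact metrizable subsets $K_\ell$ recalled in Section~\ref{sec:con}.

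For tightness I would reduce the problem to (i) showing that $\sup_n \bb P^n_\omega\big(\sup_{t\in[0,T]}\omega^n(t)(m)>\ell\big)\to 0$ as $\ell\to\infty$ and (ii) establishing a uniform modulus of continuity for the real processes $t\mapsto \omega^n(t)(m\phi)$, $\phi\in C_0(0,1)$. Both are obtained from It\^o's formula \eqref{eq:mfs}: applied to $f=m$ for each sticky Brownian it yields a semi-martingale decomposition whose drift is non-positive in the bulk (since $m''=-2$) up to a boundary contribution of order $\theta_n$, and summing the decompositions over the $n$ independent particles one exploits $n\theta_n=O(1)$ together with Doob's inequality on the aggregated martingale, whose quadratic variation grows linearly in $t$. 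A similar treatment with $f=m\phi$ plus an Aldous-type criterion gives (ii).

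For the finite-dimensional distributions, by Lemma~\ref{lem:chf} it suffices to show that for every $k\geq 1$, $0\le t_1<\cdots<t_k\le T$, and $\psi_1,\dots,\psi_k\in C_K(0,1)$,
\begin{equation*}
    \bb E^n_\omega\Big[\exp\Big\{i\sum_{j=1}^k \omega^n(t_j)(\psi_j)\Big\}\Big]
    \longrightarrow
    \bb E_\omega\Big[\exp\Big\{i\sum_{j=1}^k \omega(t_j)(\psi_j)\Big\}\Big].
\end{equation*}
Independence of the $n$ sticky Brownians factorizes the left hand side into $\prod_{k=1}^n \mathbf E^{\theta_n}_{x_k^n}[\exp\{i\sum_j \psi_j(X(t_j))\}]$. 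Iterating the Markov property and invoking Lemma~\ref{lem:st1} on each factor corresponding to a bulk particle ($x_k^n\in(a_n,1-a_n)$) produces, up to an error vanishing because $A_n\theta_n\to 0$, a factor that converges to the absorbed-Brownian contribution to $\bb P_\omega$. For each boundary particle, started at $b\in\{0,1\}$, the Markov decomposition at the first exit time from $\{b\}$ together with Lemma~\ref{lem:st} gives
\begin{equation*}
    \mathbf E^{\theta_n}_b\Big[\exp\Big\{i\sum_j \psi_j(X(t_j))\Big\}\Big] - 1 = 2\theta_{n,b}\, F_b^{(k)}(t_1,\dots,t_k;\psi_1,\dots,\psi_k) + O(\theta_n^2),
\end{equation*}
where $F_b^{(k)}$ is precisely the multi-time functional that, after exponentiation, generates the Poisson contribution with intensity built out of $\mu_t^\lambda$. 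Raising to the power $\lfloor(n-A_n)/2\rfloor$ and using $n\theta_{n,b}\to\lambda_b$ yields the Poissonian factor appearing in \eqref{eq:pt}; multiplying by the bulk contribution recovers the characteristic functional of $\bb P_\omega$, which, by the same argument used in the Chapman-Kolmogorov step of Lemma~\ref{lem:CK}, has exactly the claimed form.

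The main obstacle is the combinatorial bookkeeping in the boundary-particle expansion for $k\geq 2$: one must iteratively condition on the first excursion time from $\{b\}$ and apply the strong Markov property, then match the resulting multiple integral with the iterated integral defining the multi-time distribution of $\bb P_\omega$. This matching is the analytic counterpart of the excursion-process construction of Section~\ref{s:1.4}, and is where the uniform bounds of Lemma~\ref{lem:st} on $q_t(b,\cdot)$ and on the atoms $p_t(b,\{b\})$, $p_t(b,\{1-b\})$ are essential for controlling error terms uniformly in $n$. Once tightness and finite-dimensional convergence are in hand, Prokhorov's theorem inside $C([0,T];K_\ell)$ identifies the unique weak limit as $\bb P_\omega$, and since this limit trivially lives in $C([0,T];\Omega)$ the continuity of paths asserted in Theorem~\ref{t:mt} follows.
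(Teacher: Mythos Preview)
Your tightness argument is essentially the paper's: Jakubowski's compact containment via $\sup_t\omega(t)(m)$ controlled by It\^o's formula for $m$ plus Doob, and equicontinuity of $t\mapsto\omega(t)(\psi)$ (the paper uses $\psi\in C^2_K(0,1)$ directly rather than $m\phi$, and a Billingsley/BDG bound rather than Aldous, but these are interchangeable).

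For the finite-dimensional distributions you take a genuinely different route. You factor the $k$-time characteristic function over the $n$ particles and expand each boundary factor to first order in $\theta_n$, so that raising to the power $\sim n/2$ produces the Poisson exponential directly. The paper instead inducts on the number of time points: it proves the one-time statement (your $k=1$ case, Lemma~\ref{lem:1}), then at the inductive step applies the Markov property of the \emph{empirical} process, shows the configuration at time $t_r$ is again ``well-prepared'' (roughly half the particles at each endpoint, finitely many in the bulk; estimates \eqref{eq:1}--\eqref{eq:2}), and invokes the uniform one-time convergence of Remark~\ref{re:1}. Because $P_t$ is not Feller, the paper cannot pass to the limit inside the conditional expectation directly and introduces the Feller approximation $P_{t,\epsilon}$ of Appendix~\ref{sec:appb} for this purpose. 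Your particle-wise route sidesteps the non-Feller issue entirely, since all limits are taken at the level of the (Feller) single sticky Brownian; the price is the multi-time combinatorics you flag as the main obstacle, namely verifying that the first-order coefficient $F_b^{(k)}$ coincides with the iterated $P_t$-integral. This matching is doable by the recursion $G_k^\theta-1=p_{t_1}^\theta(b,\{b\})[G_{k-1}^{\theta,\mathrm{shift}}-1]+\int q_{t_1}^\theta(b,y)\,[\cdots]\,dy+O(\theta^2)$, using only the bounds of Lemma~\ref{lem:st}, but it is precisely the bookkeeping the paper's induction avoids.
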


The proof of this theorem is accomplished by first proving tightness of $\{\mathbb{P}_\omega^n\}$, then identifying its cluster points by showing that their finite dimensional distributions are Markovian with transition function $P_t$.

\subsection{Tightness}
\label{sec:tig}

Since $\Omega$ is not Polish, there are few technicalities in the proof of the tightness.
We shall apply the \emph{compact containment} criterion discussed in \cite{Ja}.
In order to apply Theorem~3.1 there, we observe that the family of functions $\Omega\ni\omega\mapsto \omega(\psi)\in \real$, $\psi\in C^2_K(0,1)$, separates the points
in $\Omega$ and it is closed under addition.
The tightness of the sequence $\{\mathbb{P}_\omega^n\}$ is thus achieved once we show that the following two conditions are met:
\begin{itemize}
\item [(i)] there exists a sequence of compacts $K_\ell\subset\subset \Omega$ such that
  \begin{equation*}
    \lim_{\ell\to\infty} \, \sup_{n}\, 
    \mathbb{P}_\omega^n\big( \exists \,t\in [0,T] \colon \omega(t)\not\in K_\ell \big)=0;
  \end{equation*}
\item[(ii)] for each $\psi\in C^2_{K}(0,1)$ the sequence $\{\mathbb{P}_{\omega}^n\circ\chi_\psi^{-1}\}$  is tight on $C([0,T])$, where $\chi_\psi\colon C([0,T];\Omega)\ni \omega\mapsto\omega(\psi)\in C([0,T])$.
\end{itemize}

Recalling Lemma~\ref{lem:kl}, the following statement implies that condition (i) holds.

% \begin{lemma}
%   \label{lem:ti1}
%   Given $n\in\natural$ let $x^n=(x_1^n,\dots,x_n^n)\in[0,1]^n$ be a
%   triangular array such that $\pi_n(x^n) (m) = \sum_{k=1}^nm(x_k^n)$
%   is uniformly bounded.  Denote by $\mathbf{P}_{x^n}^{\theta_n}$ the law of
%   $n$ independent two-sided sticky Brownian motions with parameter
%   $\theta_n=(\lambda_0/n,\lambda_1/n)$, $\lambda_0,\lambda_1>0$, and
%   initial datum $x^n$.  Then for each $T>0$
%   \begin{equation*}
%     \lim_{\ell\to+\infty}\, \sup_n 
%     \mathbf{P}_{x^n}^{\theta_n}\big(\sup_{t\in[0,T]}\pi_n(t)(m)>\ell\big)
%     =0.
%   \end{equation*}
% \end{lemma}

\begin{lemma}
  \label{lem:ti1}
  \begin{equation*}
    \lim_{\ell\to+\infty}\, \sup_n 
    \mathbb{P}_{\omega}^n\Big(\sup_{t\in[0,T]}\omega(t)(m)>\ell\Big)
    =0.
  \end{equation*}
\end{lemma}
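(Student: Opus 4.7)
The plan is to apply the martingale identity \eqref{eq:mfs} with $f=m$ to each of the $n$ sticky Brownians $X_1,\dots,X_n$, sum over particles, and then control the resulting martingale with Doob's $L^2$ inequality combined with Lemma~\ref{lem:st1}. Since $m''\equiv-2$, $m'(0)=1$ and $m'(1)=-1$, summing \eqref{eq:mfs} yields
\begin{equation*}
  \omega(t)(m)=\omega^n(m)+M_n(t)+\int_0^t\!ds\,\Big[-\#\{k\colon X_k(s)\in(0,1)\}+\theta_{n,0}N_0(s)+\theta_{n,1}N_1(s)\Big],
\end{equation*}
where $M_n(t):=\sum_k M^m_k(t)$ is a continuous martingale and $N_b(s)$ denotes the number of particles sitting at $b\in\{0,1\}$ at time $s$. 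Since $N_0(s)+N_1(s)\le n$ and $n\theta_{n,i}=\lambda_i$, the positive part of the drift is dominated by $(\lambda_0+\lambda_1)T$ while the negative part only improves the estimate, so
\begin{equation*}
  \sup_{t\in[0,T]}\omega(t)(m)\le\omega^n(m)+\sup_{t\in[0,T]}|M_n(t)|+(\lambda_0+\lambda_1)T,
\end{equation*}
and $\omega^n(m)\le\omega(m)<+\infty$ uniformly in $n$.

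By Doob's $L^2$ inequality together with \eqref{eq:mfs2}, controlling the second term reduces to bounding $\sum_k\mathbf{E}\big[\int_0^T\!\ind_{(0,1)}(X_k(s))\,ds\big]$ uniformly in $n$, since $|m'|\le 1$. For a particle started at $b\in\{0,1\}$, the martingale identity for $m$ rewrites the expected occupation as $\mathbf{E}_b^{\theta_n}[m(X(T))]+\theta_{n,0}\mathbf{E}_b^{\theta_n}[L_0(T)]+\theta_{n,1}\mathbf{E}_b^{\theta_n}[L_1(T)]$, where $L_i(T)\le T$; applying Lemma~\ref{lem:st1} with $\psi=m\in C_0(0,1)$ and using $\mathbf{E}_b^0[m(X(T))]=0$ bounds this by $C\|\theta_n\|$. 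For an interior particle started at $x\in(0,1)$, the sticky and absorbed processes agree up to $\tau$, so the occupation before $\tau$ contributes at most $\mathbf{E}_x^0[\tau]=m(x)$, while the strong Markov property reduces the post-$\tau$ contribution to the boundary case, giving at most $m(x)+C\|\theta_n\|$. Summing the $A_n$ interior and $n-A_n$ boundary contributions,
\begin{equation*}
  \sum_{k=1}^n\mathbf{E}\big[[M^m_k](T)\big]\le\omega^n(m)+n\,C\|\theta_n\|\le\omega(m)+C(\lambda_0+\lambda_1),
\end{equation*}
which is uniformly bounded in $n$.

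Markov's inequality then yields $\mathbb{P}^n_\omega\big(\sup_{t\le T}|M_n(t)|>\ell\big)\le C'/\ell^2$ uniformly in $n$, and combining with the pathwise bound above gives the claim. The main obstacle is the $O(\|\theta_n\|)$ estimate on the expected occupation of $(0,1)$ by a particle started at the boundary: this is what ensures that the sum of the quadratic variations is $O(1)$ rather than $O(n)$, and it is exactly the point at which Lemma~\ref{lem:st1} enters the argument.
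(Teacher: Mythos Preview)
Your proof is correct and follows essentially the same route as the paper: the martingale decomposition \eqref{eq:mfs} with $f=m$, the pathwise bound on the drift using $n\theta_n=\lambda$, Doob's $L^2$ inequality, and Lemma~\ref{lem:st1} to bound the expected occupation of $(0,1)$ by $m(x)+C\|\theta_n\|$ for each particle. The only differences are bookkeeping---the paper applies Lemma~\ref{lem:st1} with $\psi=\ind_{(0,1)}$ directly to every starting point, whereas you split into boundary and interior starts and re-use the martingale identity for the former; in that step the expected occupation is actually $-\mathbf{E}_b^{\theta_n}[m(X(T))]+\theta_{n,0}\mathbf{E}_b^{\theta_n}[L_0(T)]+\theta_{n,1}\mathbf{E}_b^{\theta_n}[L_1(T)]$, a harmless sign slip since the first term is nonpositive.
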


\begin{proof}
  Let $X$ be a two-sided sticky Brownian motion with parameter
  $\theta=(\theta_0,\theta_1)$ and initial datum $x$.  By
  \eqref{eq:mfs}
  \begin{equation}
    \label{m1}
    \begin{split}
    &m(X(t))
    =m(x)\\
    &\quad +\int_0^t\! ds\,\big[-\ind_{(0,1)}(X(s))+\theta_0
    \ind_{\{0\}}(X(s))+\theta_1 \ind_{\{1\}}(X(s))\big]+M(t), 
    \end{split}
  \end{equation}
  where $M$ is a continuous square integrable martingale with
  quadratic variation 
  \begin{equation*}
    [M](t)
    =\int_0^tds\,m'(X(s))^2 \ind_{(0,1)}(X(s)).
  \end{equation*}

  By considering $n$ independent sticky Brownians,
  from \eqref{m1} we deduce 
  \begin{equation}
    \label{eq:mm}
    \omega(t) (m)
    \leq \omega(0)(m)+2n\|\theta_n\|t+N(t),
    \qquad \mathbb{P}_{\omega}^n \text{-a.s.}
  \end{equation}
  where $N$ is a $\mathbb{P}_{\omega}^n$ continuous square integrable
  martingale with quadratic variation
  \begin{equation*}
    [N](t)
    =\int_0^t\!ds\, \omega(s)\big( (m')^2 \ind_{(0,1)} \big).
  \end{equation*}
  By assumption, $\mathbb{P}_{\omega}^n$-a.s.,  $\omega(0)(m)=\pi_n(x^n)$, which is uniformly bounded.
  Since $n \|\theta_n\|=\|\lambda\|$, it is therefore enough to show that 
  \begin{equation}
    \label{eq:Nt}
    \lim_{\ell\to+\infty}\, \sup_n 
    \mathbb{P}_\omega^n\Big(\sup_{t\in[0,T]}N(t)>\ell\Big) 
    =0.
  \end{equation}
  By Doob's inequality 
  \begin{equation}
    \label{eq:Do}
    \begin{split}
     & \mathbb{P}_\omega^n\big(\sup_{t\in[0,T]}N(t)>\ell\big)
    \leq \frac{1}{\ell^2}\mathbb{E}_\omega^n\big([N](T)\big)\\
    &\leq \frac{1}{\ell^2}\int_0^T\!dt\,
    \mathbb{E}_\omega^n\big[ |\omega(t)| \big]
    = \frac{1}{\ell^2} \sum_{k=1}^n 
    \int_0^T\!dt\,\mathbf{P}_{x_k^n}^{\theta_n}\big(X(t)\in(0,1)\big).
    \end{split}
  \end{equation}
  In view of Lemma~\ref{lem:st1} and recalling that
  $\tau=\inf\big\{t\ge 0\colon X(t)\in \{0,1\}\big\}$,
  \begin{equation*}
    \begin{split}
    &\int_0^T\! dt\, \mathbf{P}_{x}^{\theta}\big(X(t)\in(0,1)\big)
    \leq \int_0^T\! dt\,\big[
    \mathbf{P}_{x}^{0}\big(X(t)\in(0,1)\big)+C\|\theta\|\big] \\
    &\quad \leq \int_0^{+\infty}\! dt\,\mathbf{P}_{x}^{0}(\tau>t)+CT\|\theta\|
    =\E_{x}^{0}(\tau)+CT\|\theta\|
    =m(x)+CT\|\theta\|.      
    \end{split}
  \end{equation*}
  By plugging this bound into \eqref{eq:Do}, the estimate \eqref{eq:Nt} follows.
\end{proof}

By standard tightness criterion on $C([0,T])$, the following equicontinuity yields condition (ii).

\begin{lemma}
    \label{lem:ti2}
    For each $\psi\in C_K^2(0,1)$ and $\epsilon>0$
  \begin{equation*}
    \lim_{\delta\downarrow 0}\, \sup_n 
    \mathbb{P}_\omega^n\Big( \sup_{|t-s|<\delta}
    \big|\omega(t)(\psi)-\omega(s)(\psi)\big|>\epsilon\Big)=0.
  \end{equation*}
\end{lemma}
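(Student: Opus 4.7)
The plan is to write $\omega(\cdot)(\psi)$ as a bounded-variation term plus a continuous martingale via Itô's formula applied particle-by-particle, and to control the modulus of continuity of each piece separately. The compactness of $\mathrm{supp}(\psi)\subset(0,1)$ is crucial on two counts: it forces the boundary indicators in \eqref{eq:mfs} to drop out, and it provides pointwise bounds $|\psi''|\le C(\psi)\,m$ and $(\psi')^2\le C(\psi)\,m$, so that both the drift and the quadratic variation are dominated by the quantity $\omega(u)(m)$ already controlled in Lemma~\ref{lem:ti1}.

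Applying \eqref{eq:mfs} to each of the $n$ independent sticky Brownians $X_k^n$, summing over $k$, and using $\psi(0)=\psi(1)=\psi'(0)=\psi'(1)=0$, one obtains
\begin{equation*}
  \omega(t)(\psi)-\omega(s)(\psi)=\tfrac{1}{2}\int_s^t\omega(u)(\psi'')\,du+M^\psi(t)-M^\psi(s),
\end{equation*}
where, by \eqref{eq:mfs2} and the independence of the particles, $M^\psi$ is a continuous $\mathbb{P}_\omega^n$-martingale with $[M^\psi](t)=\int_0^t\omega(u)((\psi')^2)\,du$. The drift term satisfies
$\sup_{|t-s|<\delta}\big|\int_s^t\omega(u)(\psi'')\,du\big|\le C(\psi)\,\delta\,\sup_{u\in[0,T]}\omega(u)(m)$,
so Lemma~\ref{lem:ti1} makes the probability that it exceeds $\epsilon$ arbitrarily small uniformly in $n$ by first choosing $\ell$ large and then $\delta<\epsilon/(C(\psi)\ell)$.

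For the martingale term I would first upgrade the tightness of $\sup_u\omega(u)(m)$ to a uniform $L^2$ bound. Revisiting the proof of Lemma~\ref{lem:ti1}, one has $\sup_{t\in[0,T]}\omega(t)(m)\le C_1+\sup_t|N(t)|$ with $N$ a continuous martingale satisfying $\mathbb{E}_\omega^n[[N](T)]\le\int_0^T\mathbb{E}_\omega^n[|\omega(u)|]\,du$; Lemma~\ref{lem:st1} applied to $\ind_{(0,1)}$ bounds the right-hand side by $\omega^n(m)+CT\,n\|\theta_n\|$, which is uniform in $n$ since $n\|\theta_n\|=\|\lambda\|$ and $\omega^n(m)\uparrow\omega(m)<\infty$. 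Doob's $L^2$ inequality then yields $\mathbb{E}_\omega^n[(\sup_u\omega(u)(m))^2]\le C_2$ uniformly in $n$. Partitioning $[0,T]$ into $\lceil T/\delta\rceil$ subintervals of length $\delta$, any pair with $|t-s|<\delta$ lies in at most two adjacent ones, so
\begin{equation*}
  \sup_{|t-s|<\delta}|M^\psi(t)-M^\psi(s)|\le 4\max_j\sup_{u\in[t_j,t_{j+1}]}|M^\psi(u)-M^\psi(t_j)|.
\end{equation*}
Burkholder-Davis-Gundy with $p=4$ applied to each shifted martingale, combined with $[M^\psi](t_{j+1})-[M^\psi](t_j)\le C(\psi)\,\delta\,\sup_u\omega(u)(m)$ and the $L^2$ bound, gives $\mathbb{E}_\omega^n[\sup_{u\in[t_j,t_{j+1}]}|M^\psi(u)-M^\psi(t_j)|^4]\le C(\psi)\,\delta^2$. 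Markov's inequality and a union bound over the $\lceil T/\delta\rceil$ intervals yield $\mathbb{P}_\omega^n(\sup_{|t-s|<\delta}|M^\psi(t)-M^\psi(s)|>\epsilon)\le C(\psi,T)\,\delta/\epsilon^4\to0$ uniformly in $n$, concluding the proof.

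The main obstacle is the uniform $L^2$ bound $\mathbb{E}_\omega^n[(\sup_u\omega(u)(m))^2]\le C_2$: the tightness proved in Lemma~\ref{lem:ti1} does not suffice for the BDG argument at $p=4$, and one has to revisit its proof and invoke Lemma~\ref{lem:st1} for the indicator $\ind_{(0,1)}$ to close the estimate. Once this is secured, everything else is routine continuous-martingale calculus, with the $m$-weight hidden in $\psi''$ and $(\psi')^2$ automatically converting integrals against $\omega(u)$ into integrals against $\omega(u)(m)$.
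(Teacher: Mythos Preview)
Your proof is correct and follows the same architecture as the paper's: apply \eqref{eq:mfs} particle-by-particle, use that $\psi$ has compact support so the boundary terms vanish, and control the drift and the martingale part separately via the $m$-weight. The differences are only in the bookkeeping. Instead of partitioning $[0,T]$ and taking a union bound, the paper invokes the standard reduction from \cite[Thm.~8.3]{Bi} to the estimate
\[
\lim_{\delta\downarrow 0}\sup_n\sup_{s\in[0,T-\delta]}\frac{1}{\delta}\,
\mathbb{P}_\omega^n\Big(\sup_{t\in[s,s+\delta]}|\omega(t)(\psi)-\omega(s)(\psi)|>\epsilon\Big)=0,
\]
and then bounds each piece on a single window $[s,s+\delta]$. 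More notably, the paper avoids your pathwise bound $\mathbb{E}_\omega^n\big[(\sup_u\omega(u)(m))^2\big]\le C_2$: since $\omega(t)(K)=\sum_k\ind_K(X_k(t))$ is a sum of independent Bernoulli variables, one has $\mathbb{E}_\omega^n[\omega(t)(K)^2]\le\big(\mathbb{E}_\omega^n[\omega(t)(K)]\big)^2+\mathbb{E}_\omega^n[\omega(t)(K)]$, and the first moment is controlled directly by taking the expectation of \eqref{eq:mm} together with $\ind_K\le Cm$. This fixed-time second-moment bound is enough to feed into BDG (at $p=4$) and Chebyshev, so the detour through Doob's inequality on the martingale $N$ of Lemma~\ref{lem:ti1} is unnecessary. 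Your route works, but the paper's is shorter and does not need to revisit the previous lemma.
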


\begin{proof}
  By a simple inclusion of events, see \cite[Thm.~8.3]{Bi}, it is enough to show
  that for each $\epsilon>0$
  \begin{equation}
    \label{eq:Bi}
    \lim_{\delta\downarrow 0}\, \sup_n\sup_{s\in[0,T-\delta]} 
    \,\frac{1}{\delta}\, 
    \mathbb{P}_\omega^n\Big(
    \sup_{t\in[s,s+\delta]}\big|\omega(t)(\psi)-\omega(s)(\psi)\big|
    >\epsilon\Big) =0.
  \end{equation}

  Fix $s\in[0,T-\delta]$ and let $X$ be a two-sided $\theta$-sticky
  Brownian motion. By \eqref{eq:mfs}
  \begin{equation}
    \label{m2}
    \psi(X(t))- \psi(X(s))
    =\frac{1}{2}\int_s^t \!du\,\psi''(X(u))+M^s(t)
  \end{equation}
  where $M^s(t)$, $t\in[s,T]$ is a continuous square integrable
  martingale with quadratic variation
  \begin{equation*}
    [M^s](t) =\int_s^t\!du\,\psi'(X(u))^2.
  \end{equation*}

  By considering $n$ independent sticky Brownians, from \eqref{m2} we deduce, $\mathbb{P}_\omega^n$-a.s. 
  \begin{equation}
    \label{eq:4}
    \sup_{t\in[s,s+\delta]}\big|\omega(t)(\psi)-\omega(s)(\psi)\big|
    \leq\frac{1}{2} \int_s^{s+\delta}\! du\,|\omega(u)(\psi'')|
    +\sup_{t\in[s,s+\delta]}|N^s(t)|
  \end{equation}
    where $N^s(t)$, $t\in[s,T]$ is a $\mathbb{P}_\omega^n$ continuous
    square integrable  martingale with quadratic variation 
  \begin{equation*}
    [N^s](t)
    =\int_s^t\!du\,\omega(u)\big((\psi')^2\big).
  \end{equation*}

  Let $K:=\supp\psi$. To control the bounded variation term on the
  right hand side of \eqref{eq:4} we apply Chebyshev's and
  Cauchy-Schwarz's inequalities to deduce
  \begin{equation*}
    \mathbb{P}_\omega^n \Big( 
    \int_s^{s+\delta}\!du\,\big|\omega(u)(\psi'')\big|>\epsilon\Big)
    \leq \frac{\delta}{\epsilon^2}\|\psi''\|^2_\infty
    \mathbb{E}_\omega^n \int_s^{s+\delta}\!du\,\big[\omega(u)(K)\big]^2.
  \end{equation*}
  We claim that 
  \begin{equation}
    \label{eq:5}
    \sup_n\,\sup_{t\in[0,T]}\,\mathbb{E}_\omega^n[\omega(t)(K)^2]
    <+\infty.
  \end{equation}
  Together with the previous bound this yields 
  \begin{equation*}
     \lim_{\delta\downarrow 0} \, \sup_n \, \sup_{s\in[0,T-\delta]}
     \frac{1}{\delta} 
     \mathbb{P}_\omega^n\Big( 
     \int_s^{s+\delta}\!du\,\big|\omega(u)(\psi'')\big|>\epsilon\Big)
     =0.
  \end{equation*}

  To control the martingale part on the right hand side of
  \eqref{eq:4}, we apply the BDG inequality (see e.g.\
  \cite[Thm.~IV.4.1]{Re:Yo}) and Cauchy-Schwarz inequality to deduce
  \begin{equation*}
    \begin{split}
    & \mathbb{P}_\omega^n \Big( \sup_{t\in[s,s+\delta]}| N^s(t)|>\epsilon\Big)
    \leq C \mathbb{E}_\omega^n\big([N^s](s+\delta)^2\big)
    \\ 
    &\qquad \leq C \delta \, \|\psi'\|^4_\infty 
    \int_s^{s+\delta}\! du\,\mathbb{E}_\omega^n\big(\omega(u)(K)^2\big).
    \end{split}
  \end{equation*}
  By using \eqref{eq:5} we thus get
    \begin{equation*}
     \lim_{\delta\downarrow 0} \, \sup_n \, \sup_{s\in[0,T-\delta]}
     \,\frac{1}{\delta}\,  
     \mathbb{P}_\omega^n\Big(\sup_{t\in[s,s+\delta]}|N^s(t)|>\epsilon\Big)
     =0.
  \end{equation*}

  It remains to prove the claim \eqref{eq:5}.
  By a simple computation
  \begin{equation*}
    \mathbb{E}_\omega^n\big(\omega(t) (K)^2\big)
    \leq \big\{\mathbb{E}_\omega^n\big( \omega(t)(K) \big)\big\}^2
    +\mathbb{E}_\omega^n(\omega(t)(K)).
  \end{equation*}
  We conclude by observing that for some constant $C>0$ we have
  $\ind_K\leq C m$ and taking the expectation of \eqref{eq:mm}. 
\end{proof}

\subsection{Identification of the limit}
\label{sec:id}

We here conclude the proof of Theorem~\ref{teo:conv} by identifying the finite dimensional distributions, via their characteristic
function (cfr.\ Lemma~\ref{lem:chf}), of the cluster points of the sequence $\{\mathbb{P}_\omega^n\}$.
  Recall that $P_t$, $t\geq0$ is the time homogeneous transition function defined in \eqref{eq:pt}.

\begin{pro}
  \label{teo:lim}
  For each $r\in\natural$, $0\le t_1<\dots<t_r\le T$, and $\psi_1,\dots,\psi_r\in C_K(0,1)$
  \begin{equation}
    \label{eq:ch}
       \lim_{n\to+\infty}   \mathbb{E}_\omega^n\big( 
      e^{i\sum_{h=1}^r \omega(t_h)(\psi_h)}\big)
      =\int\!\!\cdots\!\!\int\prod_{h=1}^r P_{t_h-t_{h-1}}(\eta_{h-1},d\eta_h)e^{i\eta_h(\psi_h)},
   \end{equation}
   where we understand that $t_0=0$ and $\eta_0=\omega$.
  % \begin{equation}
  %   \label{eq:ch}
  %   \begin{split}
  %     & \lim_{n\to+\infty}   \mathbb{E}_\omega^n 
  %     \exp\Big\{i\sum_{h=1}^r \omega(t_h)(\psi_h) \Big\}
  %     \\ & \quad
  %     =\idotsint \! P_{t_1}(\omega,d\eta_1) P_{t_2-t_1}(\eta_1,d\eta_2)
  %     \\
  %     & \qquad \qquad 
  %     \cdots P_{t_{r}-t_{r-1}}(\eta_{r-1},d\eta_r)
  %     \, \exp\Big\{i\sum_{h=1}^r\eta_h(\psi_h)\Big\}.
  %   \end{split}
  % \end{equation}
\end{pro}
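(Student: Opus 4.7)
The plan is to exploit the independence of the $n$ sticky Brownians defining $\mathbb{P}_\omega^n$. By factorization, the LHS of \eqref{eq:ch} equals
\begin{equation*}
\prod_{k=1}^n \Psi^{\theta_n}(x_k^n), \qquad \Psi^\theta(x) := \mathbf{E}_x^\theta\Bigl[\exp\Bigl(i\sum_{h=1}^r \psi_h(X(t_h))\Bigr)\Bigr],
\end{equation*}
and I split the product according to the three classes of prepared initial data: the $A_n$ interior points $x_1^n,\dots,x_{A_n}^n$ encoding $\omega^n$, the $N_0=\lfloor(n-A_n)/2\rfloor$ copies of $0$, and the $N_1 = n-A_n-N_0$ copies of $1$. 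I will show that the interior factor converges to $\prod_{x\in\omega}\Psi^0(x)$, the Markovian evolution of the initial particles, while the two boundary factors together converge to the Poissonian reservoir contribution.

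For the interior factor, I first upgrade Lemma~\ref{lem:st1} to joint finite-dimensional distributions: by induction on $r$ via the sticky-Brownian Markov property, $\sup_{x\in[0,1]}|\Psi^{\theta_n}(x)-\Psi^0(x)|=O(1/n)$. Since $\psi_h(0)=\psi_h(1)=0$, the difference $\Psi^0(x)-1 = \mathbf{E}_x^0[(e^{i\sum_h\psi_h(X(t_h))}-1)\ind_{\{\tau>t_1\}}]$ satisfies $|\Psi^0(x)-1|\le 2\mathbf{P}_x^0(\tau>t_1)\le 2m(x)/t_1$. Monotone convergence $\omega^n(m)\uparrow\omega(m)$, combined with $A_n=o(n)$ and the uniform $O(1/n)$ error, then yields
\begin{equation*}
\prod_{k=1}^{A_n}\Psi^{\theta_n}(x_k^n) \longrightarrow \prod_{x\in\omega}\Psi^0(x),
\end{equation*}
where the infinite product converges absolutely by the bound $|\Psi^0-1|\le Cm$.

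For the boundary factors, conditioning at $t_1$ via the sticky-Brownian Markov property and using $\psi_1(b)=0$ for $b\in\{0,1\}$ gives the recursion
\begin{equation*}
\Psi^{\theta_n}(b)-1 = \bigl(\tilde{\Psi}^{\theta_n}(b)-1\bigr) + \int q_{t_1}(b,y)\bigl[e^{i\psi_1(y)}\tilde{\Psi}^{\theta_n}(y)-1\bigr]dy + O(1/n^2),
\end{equation*}
where $\tilde{\Psi}^\theta$ denotes the analogue of $\Psi^\theta$ for the shifted times $t_2-t_1,\ldots,t_r-t_1$. Iterating this recursion and substituting the first-order kernel expansion $q_t(b,y) = (2\lambda_b/n)\mathbf{P}_y^0(\tau_b\le t) + O(1/n^2)$ from Lemma~\ref{lem:st}, together with the multi-time extension of Lemma~\ref{lem:st1} above, produces $\Psi^{\theta_n}(b)-1 = (2\lambda_b/n)J_b + O(1/n^2)$, where $J_b = \sum_{h=1}^r\int\mathbf{P}_y^0(\tau_b\le t_h-t_{h-1})[e^{i\psi_h(y)}\Psi^0_{h+1}(y)-1]dy$ and $\Psi^0_{h+1}$ denotes the tail absorbed-Brownian characteristic function starting at time $t_h$. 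Since $N_b\sim n/2$, the elementary limit $(1+a/n)^{n/2}\to e^{a/2}$ gives a boundary contribution of $\exp\{\lambda_0 J_0 + \lambda_1 J_1\}$.

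Finally, by iterating \eqref{eq:pt} the RHS of \eqref{eq:ch} unrolls to $\prod_{x\in\omega}\Psi^0(x)\cdot\exp\bigl\{\sum_h \mu^\lambda_{t_h-t_{h-1}}(e^{i\psi_h}\Psi^0_{h+1}-1)\bigr\}$, which coincides with the limit computed above via the definition \eqref{eq:mu} of $\mu^\lambda_t$. The main obstacle is the boundary analysis: iterating the Markov recursion through all $r$ time steps while uniformly controlling the accumulation of $O(1/n^2)$ remainders, and algebraically verifying that the resulting leading coefficient $J_b$ matches the Poisson reservoir contribution on the RHS.
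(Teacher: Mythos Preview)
Your approach is correct and differs genuinely from the paper's. Both arguments handle $r=1$ by factoring over the $n$ particles (this is Lemma~\ref{lem:1}), but thereafter the paper performs the induction $r\to r+1$ at the \emph{process} level: it invokes the Markov property of $\bb P^n_\omega$ at time $t_r$ and must then pass to the limit inside the conditional expectation. Since $P_t$ is not Feller, this forces two extra ingredients: (i) a priori bounds showing that with high probability the sticky configuration at time $t_r$ lies in a set on which a uniform version of Lemma~\ref{lem:1} applies (Remark~\ref{re:1} and estimates \eqref{eq:1}--\eqref{eq:2}), and (ii) a continuous approximation $P_{t,\epsilon}$ of the transition function (Appendix~\ref{sec:appb}). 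By contrast, you keep the particle factorization for all $r$ and run the induction at the single-particle level, where the sticky Brownian is Feller and no such machinery is needed; as a bonus you obtain an explicit closed form for both sides of \eqref{eq:ch}. Two points in your outline deserve to be written out carefully: the multi-time extension of Lemma~\ref{lem:st1} is not immediate, because the relevant integrand $e^{i\psi_1}\tilde\Psi^0$ equals $1$ rather than $0$ at the endpoints, so the original proof must be adapted (subtract the boundary value $1$ and track the resulting $O(\|\theta\|)$ correction through the strong Markov property at $\tau$); and the bound $|\Psi^0(x)-1|\le 2m(x)/t_1$ fails when $t_1=0$, a case the paper treats separately in Lemma~\ref{lem:1}.
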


We start with the case $r=1$.

\begin{lemma}
  \label{lem:1}
  For each $t\in [0,T]$ and $\psi\in C_0(0,1)$
  \begin{equation}
    \label{eq:ch1}
    \lim_{n\to+\infty}\mathbb{E}_\omega^n\big[e^{i\omega(t)(\psi)}\big]
    =\int \! P_t(\omega,d\eta)\, e^{i\eta(\psi)}.
  \end{equation}
\end{lemma}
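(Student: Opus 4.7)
The plan is to use independence to factor the expectation and analyze separately the contributions from each initial position. Since $\psi\in C_0(0,1)$ extends continuously by zero at the endpoints, $\omega(t)(\psi)=\sum_{k=1}^n\psi(X_k(t))$, and by independence
\begin{equation*}
  \mathbb{E}_\omega^n\big[e^{i\omega(t)(\psi)}\big]
  =\prod_{k=1}^{A_n}\mathbf{E}_{x_k^n}^{\theta_n}\big[e^{i\psi(X(t))}\big]
  \cdot\big(\mathbf{E}_0^{\theta_n}\big[e^{i\psi(X(t))}\big]\big)^{M_n^0}
  \cdot\big(\mathbf{E}_1^{\theta_n}\big[e^{i\psi(X(t))}\big]\big)^{M_n^1},
\end{equation*}
with $M_n^0=\lfloor(n-A_n)/2\rfloor$, $M_n^1=n-A_n-M_n^0$, both equivalent to $n/2$. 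The strategy is to show that the interior product converges to $\int P_t^0(\omega,d\eta)\,e^{i\eta(\psi)}$ while the two boundary factors jointly converge to the Poisson characteristic function $\exp(\mu_t^\lambda(e^{i\psi}-1))$; by \eqref{eq:pt} their product is exactly $\int P_t(\omega,d\eta)\,e^{i\eta(\psi)}$.

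For the interior factor, Lemma~\ref{lem:st1} gives a uniform $O(\|\theta_n\|)=O(1/n)$ bound on the difference between the $\theta_n$-sticky and absorbed characteristic functions. Since all factors lie in the closed unit disk, the telescoping inequality $|\prod z_k-\prod w_k|\le\sum|z_k-w_k|$ combined with $A_n=o(n)$ shows that replacing $\mathbf{E}_{x_k^n}^{\theta_n}$ with $\mathbf{E}_{x_k^n}^0$ introduces a total error of $o(1)$. It then remains to prove
$\prod_{x\in\omega^n}\mathbf{E}_x^0[e^{i\psi(X(t))}]\to\prod_{x\in\omega}\mathbf{E}_x^0[e^{i\psi(X(t))}]=\int P_t^0(\omega,d\eta)e^{i\eta(\psi)}$. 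Since $\omega^n$ is $\omega$ restricted to $(a_n,1-a_n)$, the missing particles satisfy, by Markov's inequality applied to $\tau$,
\begin{equation*}
  \big|1-\mathbf{E}_x^0[e^{i\psi(X(t))}]\big|
  \le\|\psi\|_\infty\mathbf{P}_x^0(\tau>t)
  \le\|\psi\|_\infty\mathbf{E}_x^0(\tau)/t=\|\psi\|_\infty m(x)/t,
\end{equation*}
so their contribution is bounded by $(\|\psi\|_\infty/t)\int_{(0,a_n)\cup(1-a_n,1)}m\,d\omega\to 0$, yielding both the well-definedness of the infinite product and the claimed convergence.

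For the boundary factor at $0$, the representation \eqref{eq:15} together with $\psi(0)=\psi(1)=0$ and the normalization $\int q_t^{\theta_n}(0,y)\,dy+q_t^{\theta_n,1}(0)+q_t^{\theta_n,0}(0)=1$ give
\begin{equation*}
  \mathbf{E}_0^{\theta_n}\big[e^{i\psi(X(t))}\big]
  =1+\int q_t^{\theta_n}(0,y)(e^{i\psi(y)}-1)\,dy.
\end{equation*}
Lemma~\ref{lem:st} replaces $q_t^{\theta_n}(0,y)$ by $2(\lambda_0/n)\mathbf{P}_y^0(\tau_0\le t)$ with uniform error $O(1/n^2)$, so
\begin{equation*}
  \mathbf{E}_0^{\theta_n}\big[e^{i\psi(X(t))}\big]
  =1+\frac{2\lambda_0}{n}\int\mathbf{P}_y^0(\tau_0\le t)(e^{i\psi(y)}-1)\,dy+O(1/n^2).
\end{equation*}
Raising to the power $M_n^0\sim n/2$ and using the principal-branch expansion $\log(1+z)=z+O(|z|^2)$ for $|z|\to 0$ gives convergence to $\exp\bigl(\lambda_0\int\mathbf{P}_y^0(\tau_0\le t)(e^{i\psi(y)}-1)\,dy\bigr)$, and the symmetric analysis at $1$ yields $\exp\bigl(\lambda_1\int\mathbf{P}_y^0(\tau_1\le t)(e^{i\psi(y)}-1)\,dy\bigr)$.

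Multiplying the three limits and recognizing by \eqref{eq:mu} that the density of $\mu_t^\lambda$ is $\lambda_0\mathbf{P}_y^0(\tau_0\le t)+\lambda_1\mathbf{P}_y^0(\tau_1\le t)$, the right-hand side of \eqref{eq:ch1} is obtained as $\int P_t^0(\omega,d\eta)e^{i\eta(\psi)}\cdot\exp(\mu_t^\lambda(e^{i\psi}-1))$, which by the Poisson characteristic function and \eqref{eq:pt} equals $\int P_t(\omega,d\eta)e^{i\eta(\psi)}$. The main obstacle is the interior continuity step: since $\omega$ may have infinitely many particles, one must interchange the limit with the infinite product, and this relies crucially on the uniform tail estimate $|1-\mathbf{E}_x^0[e^{i\psi(X(t))}]|\le Cm(x)/t$ together with the preparation condition $A_n=o(n)$, which is also what makes the accumulated sticky-to-absorbed error $A_n\cdot O(1/n)$ vanish.
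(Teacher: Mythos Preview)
Your proof is correct and follows essentially the same approach as the paper's: factor the product into boundary and interior contributions, handle the boundary factors via Lemma~\ref{lem:st} to obtain the Poisson characteristic function $\exp\{\mu_t^\lambda(e^{i\psi}-1)\}$, handle the interior factor via Lemma~\ref{lem:st1} (your telescoping bound $|\prod z_k-\prod w_k|\le\sum|z_k-w_k|$ with $A_n\cdot O(1/n)=o(1)$ is exactly the paper's $|R_k^n|\le C/n$ argument), and control the passage from $\omega^n$ to $\omega$ using $|1-\mathbf{E}_x^0[e^{i\psi(X(t))}]|\le C\,\mathbf{P}_x^0(\tau>t)\le C\,m(x)/t$ so that the tail is dominated by $\omega(m)<\infty$. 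The only minor omission is that the paper explicitly separates the case $t=0$, where the $1/t$ bound is unavailable and the statement follows directly from $\omega^n\to\omega$.
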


\begin{proof}
  For $t=0$ the statement follows directly from the construction of the
  triangular array $x^n$. For $t>0$ we write
  \begin{equation}
    \label{eq:prod}
    \begin{split}
    \prod_{k=1}^n\E_{x_k^n}^{\theta_n} \big[e^{i\psi(X(t))}\big] 
    &=\Big(\E_{0}^{\theta_n}\big[e^{i\psi(X(t))}\big]\Big)^{
      \lfloor\frac{n-A_n}{2}\rfloor} 
    \Big(\E_{1}^{\theta_n}\big[e^{i\psi(X(t))}\big]\Big)^{
      \lceil\frac{n-A_n}{2}\rceil}
    \\ &\quad \times 
    \prod_{k=1}^{A_n}\E_{x_k^n}^{\theta_n}\big[e^{i\psi(X(t))}\big].
    \end{split}
  \end{equation}
  Since $A_n=o(n)$ we have $n^{-1}\lfloor(n-A_n)/2\rfloor\to1/2$.
  Hence, by applying Lemma~\ref{lem:st},
  \begin{equation*}
    \begin{split}
    \lim_{n\to+\infty}
    \Big(\E_{0}^{\theta_n}\big[e^{i\psi(X(t))}\big]\Big)^{
      \lfloor\frac{n-A_n}{2}\rfloor} 
    & =\lim_{n\to+\infty}
    \Big(1+\E_{0}^{\theta_n}\big[e^{i\psi(X(t))}-1\big]
    \Big)^{\lfloor\frac{n-A_n}{2}\rfloor} 
    \\ 
    &=\, \exp\Big\{\lambda_0\int_0^1\!dx\,\mathbf{P}^0_x(\tau_0\leq t)(e^{i\psi(x)}-1)\Big\}.
    \end{split}
  \end{equation*}
  For the same reasons,
    \begin{equation*}
    \lim_{n\to+\infty} \Big(\E_{1}^{\theta_n}
    \big[e^{i\psi(X(t))}\big] \Big)^{\lceil\frac{n-A_n}{2}\rceil}
    =\exp\Big\{\lambda_1\int_0^1\!dx\,\mathbf{P}^0_x(\tau_1\leq t)(e^{i\psi(x)}-1)\Big\}.
  \end{equation*}
  Recalling \eqref{eq:pt}, to complete the proof it
  remains to show   
  \begin{equation}
    \label{eq:prod1}
    \lim_{n\to+\infty}
    \prod_{k=1}^{A_n}\E_{x_k^n}^{\theta_n}\big[e^{i\psi(X(t))}\big] 
    =\prod_{x\in\omega}\E_{x}^0\big[e^{i\psi(X(t))}\big].
  \end{equation}
  
  In order to prove \eqref{eq:prod1}, we set 
  \begin{equation*}
    R_k^n:=
    \E_{x_k^n}^{\theta_n}\big[e^{i\psi(X(t))}\big] - 
    \E_{x_k^n}^{0}\big[e^{i\psi(X(t))}\big]
    =\E_{x_k^n}^{\theta_n}\big[e^{i\psi(X(t))}-1\big] -
    \E_{x_k^n}^{0}\big[e^{i\psi(X(t))}-1\big]
  \end{equation*}
  and observe, as follows from Lemma~\ref{lem:st1}, that for each
  $t>0$ there exists a constant $C$ independent of $n$ and $k$ such
  that $|R_k^n|\le C/n$.  Since $A_n = o(n)$ and
  $\omega^n=\sum_{k=1}^{A_n}\delta_{x_k^n}$, it is therefore enough to
  show
  \begin{equation*}
    \lim_{n\to+\infty}\prod_{x\in\omega^n}\E_{x}^0\big[e^{i\psi(X(t))}\big]
    =\prod_{x\in\omega}\E_{x}^0\big[e^{i\psi(X(t))}\big],
  \end{equation*}
  which is implied by
\begin{equation*}
  \sum_{x\in\omega}\big|\E_{x}^0\big[e^{i\psi(X(t))}-1\big]\big|
  \leq \sum_{x\in\omega}\mathbf{P}_x^0 (\tau>t)
  \leq \frac{1}{t}\sum_{x\in\omega}\E_{x}^0(\tau)
  =\frac{1}{t} \omega(m)<+\infty.
\end{equation*}
\end{proof}

\begin{re}
 \label{re:1}
 The argument in the above proof actually implies the following uniform statement that will be used in the sequel.
 Let $\ell>0$, $\psi\in C_0(0,1)$, and  $\epsilon_n\downarrow0$.
 Then 
 \begin{equation}\label{eq:16}
   \lim_{n\to+\infty}\sup_{x\in B^n_\ell}\Big|\prod_{k=1}^n\E_{x_k}^{\theta_n}\big[e^{i\psi(X(t))}\big]-\int \! P_t\Big(\sum_{k=1}^n\delta_{x_k},d\eta\Big)\, e^{i\eta(\psi)}\Big|
   =0,
 \end{equation}
 where
 \begin{equation*}
   \begin{split}
        B_\ell^n
   &=\Big\{
   x\in[0,1]^n\colon |\{k\colon x_k\in(0,1)\}|\leq\ell,\\
   &\quad\qquad\frac{1}{n}\Big||\{k\colon x_k=0\}|-\frac{n}{2}\Big|\leq\epsilon_n,\ \frac{1}{n}\Big||\{k\colon x_k=1\}|-\frac{n}{2}\Big|\leq \epsilon_n\Big\}.
   \end{split}
 \end{equation*}
 As usual, we understand that if $x_k\in\{0,1\}$ then it gives no weight to the sum on the second term in \eqref{eq:16}.
\end{re}

The proof of Proposition~\ref{teo:lim} is achieved by induction on $r$.
The recursive step is the content of the next lemma.

\begin{lemma}
  Assume that the conclusion of Proposition~\ref{teo:lim} holds for
  some $r\in\natural$ then it holds for $r+1$.
\end{lemma}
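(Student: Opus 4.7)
The strategy is to apply the Markov property of the $n$ independent sticky Brownians at the first time $t_1$, use a uniform version of the $r$-time inductive hypothesis on the good set $B_\ell^n$ of Remark~\ref{re:1} to handle the conditional $r$-time expectation in the future of $t_1$, and finally evaluate the outer single-time expectation via the product asymptotics of Lemma~\ref{lem:1}.

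Denote by $X^n(t)=(X_1(t),\ldots,X_n(t))$ the configuration of the $n$ sticky Brownians under $\mathbb{P}_\omega^n$. Markov property and particle independence yield
\begin{equation*}
  \mathbb{E}_\omega^n\big[e^{i\sum_{h=1}^{r+1}\omega(t_h)(\psi_h)}\big]
  = \mathbb{E}_\omega^n\Big[e^{i\omega(t_1)(\psi_1)}\,\Psi_n\big(X^n(t_1)\big)\Big],
\end{equation*}
with $\Psi_n(y):=\prod_{k=1}^n \mathbf{E}_{y_k}^{\theta_n}\big[e^{i\sum_{h=1}^{r}\psi_{h+1}(X(t_{h+1}-t_1))}\big]$. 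A tightness computation gives $\mathbb{P}_\omega^n(X^n(t_1)\in B_\ell^n)\to 1$ for suitable $\ell>0$ and $\epsilon_n\downarrow 0$: the particle count in $(0,1)$ at time $t_1$ has bounded expectation as in the proof of Lemma~\ref{lem:ti1}, while Chebyshev applied to the indicators $\ind_{\{X_k(t_1)=b\}}$, $b\in\{0,1\}$, combined with the bound $1-p_{t_1}^{\theta_n}(b,\{b\})=O(\|\theta_n\|)$ of Lemma~\ref{lem:st}, places the boundary counts within $O(\sqrt n)$ of $n/2$.

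On the event $\{X^n(t_1)\in B_\ell^n\}$ a uniform version of the inductive hypothesis---proved jointly with the induction by the same argument leading to Remark~\ref{re:1}---gives $\Psi_n(X^n(t_1))=G(\omega(t_1))+o(1)$ uniformly, where $G(\eta):=\int\!\cdots\!\int\prod_{h=1}^{r} P_{t_{h+1}-t_h}(\eta_{h-1},d\eta_h)\,e^{i\eta_h(\psi_{h+1})}$ with $\eta_0=\eta$. Iterating \eqref{eq:pt} and using the product structure of $P_t^0$, one obtains the factorisation $G(\eta)=C\prod_{x\in\eta}K(x)$, where $C$ depends only on $\psi_2,\ldots,\psi_{r+1}$ and $K(x):=\mathbf{E}_x^0\big[e^{i\sum_{h=1}^{r}\psi_{h+1}(X(t_{h+1}-t_1))}\big]$. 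Inserting this in the outer expectation and again using the independence of the sticky Brownians,
\begin{equation*}
  \mathbb{E}_\omega^n\big[e^{i\omega(t_1)(\psi_1)}\,G(\omega(t_1))\big]
  = C\prod_{k=1}^n \mathbf{E}_{x_k^n}^{\theta_n}\big[\bar H\big(X(t_1)\big)\big],
\end{equation*}
with $\bar H(y):=e^{i\psi_1(y)}K(y)$ for $y\in(0,1)$ and $\bar H(0)=\bar H(1)=1$, so that $\bar H$ is continuous on $[0,1]$ with $\bar H-1$ vanishing at the endpoints. The product is analysed exactly as in the proof of Lemma~\ref{lem:1}, separating the approximately $n/2$ particles initially at $0$, at $1$ and the $A_n$ particles in $(0,1)$, and invoking Lemmas~\ref{lem:st} and \ref{lem:st1}; the resulting limit matches $\int\!\cdots\!\int\prod_{h=1}^{r+1} P_{t_h-t_{h-1}}(\eta_{h-1},d\eta_h)\,e^{i\eta_h(\psi_h)}$ and closes the induction.

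The main obstacle is providing the uniform version of the inductive hypothesis used above: the pointwise statement of Proposition~\ref{teo:lim} at level $r$ does not suffice because $G$ fails to be continuous on $\Omega$ owing to the failure of the Feller property (cf.\ the remark following Lemma~\ref{lem:CK}), so one cannot pass to the limit in $\Psi_n(X^n(t_1))$ through the random datum $X^n(t_1)$ by soft arguments. One must therefore run the induction simultaneously on the pointwise and uniform statements; the latter follows by the very same product-asymptotics scheme used for $r=1$, since Lemmas~\ref{lem:st} and \ref{lem:st1} provide single-particle estimates that are already uniform in the starting point.
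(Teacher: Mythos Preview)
Your argument is sound but proceeds by a genuinely different route from the paper's. You condition at the \emph{first} time $t_1$, so the inner conditional expectation involves the $r$ future times and you are forced to carry a uniform-in-initial-data version of the level-$r$ statement through the induction. The paper instead conditions at the \emph{last} time $t_r$: the future is then a single time step, for which the uniform estimate of Remark~\ref{re:1} (the $r=1$ case) already suffices, while the past carries $r$ times and is handled by the \emph{pointwise} inductive hypothesis as stated. The non-Feller obstruction---that $\eta\mapsto\int P_{t_{r+1}-t_r}(\eta,d\zeta)e^{i\zeta(\psi_{r+1})}$ is not continuous on $\Omega$---is circumvented in the paper by the approximation $P_{t,\epsilon}$ of Appendix~\ref{sec:appb}: by Lemma~\ref{lem:pte} this approximation is continuous in $\eta$ and coincides with $P_t$ uniformly on $\{|\eta|\le\ell\}$, so one can pass to the limit in $n$ via tightness and the inductive hypothesis, and only afterwards send $\epsilon\to0$.

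Your device for the same obstruction is different and rather elegant: the explicit factorisation $G(\eta)=C\prod_{x\in\eta}K(x)$, with $K\in C[0,1]$ and $K(0)=K(1)=1$, reduces the outer expectation to a single-particle product $\prod_k\mathbf E^{\theta_n}_{x_k^n}[\bar H(X(t_1))]$ with $\bar H-1\in C_0(0,1)$, which is then treated exactly as in Lemma~\ref{lem:1} via Lemmata~\ref{lem:st}--\ref{lem:st1}. This avoids Appendix~\ref{sec:appb} altogether, at the price of strengthening the inductive hypothesis to the uniform form you describe. Both trades are legitimate; the paper's choice keeps the induction as stated and isolates the continuity issue in a separate, reusable approximation lemma, whereas yours is more self-contained but requires reformulating Proposition~\ref{teo:lim} before starting the induction.
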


\begin{proof}
  The proof is essentially follows from the Markov property of the sticky Brownians and Lemma~\ref{lem:1}.
  Since the transition function $P_t$ is not Feller, there is however a continuity issue that will be handled by a suitable approximation.

  We first show that, with probability close to one for $n$ large, the configuration of the
  sticky Brownians at some positive time meets the conditions on the initial datum
  in Remark~\ref{re:1}. More precisely, letting $\mathcal{P}^n:=\prod_{k=1}^n\mathbf{P}_{x^n_k}^{\theta_n}$ be the law of the $n$ independent sticky Brownians, we shall prove the two following bounds. 

  For each $t\in (0,T)$
  \begin{equation}
    \label{eq:1}
    \lim_{\ell\to+\infty}\sup_n \mathcal{P}^n\Big(\sum_{k=1}^n\delta_{X_k(t)}\not\in B^1_\ell\Big)
    =0,
    \qquad
    B^1_\ell:=\{\omega\in\Omega\colon |\omega|\leq\ell\}.
  \end{equation}
  There exists a sequence $\epsilon_n\to0$ such that for each $t\geq0$
   \begin{equation}
    \label{eq:2}
   \lim_{n\to+\infty}
    \mathcal{P}^n\Big(X(t)\not\in B_{\epsilon_n}^2\Big)
    =0,
  \end{equation}
  where
  \begin{equation*}
    B_\epsilon^2
    :=\Big\{x\in[0,1]^n\colon
    \frac{1}{n}\Big||\{k\colon x_k=0\}|-\frac{n}{2}\Big|\leq\epsilon,\ \frac{1}{n}\Big||\{k\colon x_k=1\}|-\frac{n}{2}\Big|\leq \epsilon\Big\}.
  \end{equation*}

  To prove \eqref{eq:1}, we observe that by Lemma~\ref{lem:st1} there exists a
  constant $C>0$ such that
  \begin{equation*}
    \begin{split}
      &\int d\mathcal{P}^n\big|\{k\colon X_k(t)\in(0,1)\}\big|
    =\sum_{k=1}^n \mathbf{P}^{\theta_n}_{x^n_k}(X(t)\in(0,1))\\
    &\qquad=\sum_{k=1}^n \mathbf{P}^{0}_{x^n_k}(X(t)\in(0,1))+C
    =\sum_{k=1}^n \mathbf{P}^{0}_{x^n_k}(\tau>t)+C\\
    &\qquad\leq \frac{1}{t}\sum_{k=1}^n\E^{0}_{x^n_k}(\tau)+C
    = \frac{1}{t}\sum_{k=1}^nm(x_k^n)+C,
    \end{split}
  \end{equation*}
  which is uniformly bounded in $n$.
  By Chebyshev's inequality, \eqref{eq:1} follows.

  To prove \eqref{eq:2}, it is enough to show that
  \begin{equation}
    \label{eq:6}
    \lim_{n\to+\infty}  \mathcal{P}^n\Big(\Big|\sum_{k=1}^n\ind_{\{b\}}(X_k(t))-\frac{n}{2}\Big|\geq n\epsilon_n\Big)=0,
    \qquad b\in\{0,1\}.
  \end{equation}
  We consider only the case $b=0$ and write
  \begin{equation*}
    \begin{split}
      &\Big|\sum_{k=1}^n\ind_{\{0\}}(X_k(t))-\frac{n}{2}\Big|\\
      &\leq \Big|\sum_{k=1}^{A_n}\ind_{\{0\}}(X_k(t))\Big|+\Big|\sum_{k=A_n+1}^{A_n+\lfloor (n-A_n)/2\rfloor}\!\!\!\!\!\!(\ind_{\{0\}}(X_k(t))-1)\Big|
      +\Big|\Big\lfloor \frac{n-A_n}{2}\Big\rfloor-\frac{n}{2}\Big|\\
      &+\Big|\sum_{k=A_n+\lfloor (n-A_n)/2\rfloor+1}^{n}\!\!\!\!\!\!\ind_{\{0\}}(X_k(t))\Big|\\
      &\leq A_n+\Big\lceil\frac{A_n}{2}\Big\rceil+\Big|\sum_{k=A_n+1}^{A_n+\lfloor (n-A_n)/2\rfloor}\!\!\!\!\!\!(\ind_{\{0\}}(X_k(t))-1)\Big|+\Big|\sum_{k=A_n+\lfloor (n-A_n)/2\rfloor+1}^{n}\!\!\!\!\!\!\!\!\!\!\!\!\ind_{\{0\}}(X_k(t))\Big|.
    \end{split}
  \end{equation*}
  By setting $\epsilon_n=9(A_n+1)/(2n)$, 
  since $A_n+\lceil A_n/2\rceil<n\epsilon_n/3$, it is enough to show that
  \begin{equation*}
    \begin{split}
      &\lim_{n\to+\infty} \mathcal{P}^n\Big(\Big|\sum_{k=A_n+1}^{A_n+\lfloor(n-A_n)/2\rfloor}\!\!\!\!\!\!(\ind_{\{0\}}(X_k(t))-1)\Big|\geq\frac{n\epsilon_n}{3}\Big)=0\\
      &\lim_{n\to+\infty} \mathcal{P}^n\Big(\Big|\sum_{k=A_n+\lfloor (n-A_n)/2\rfloor+1}^{n}\!\!\!\!\!\!\!\!\!\!\!\!\ind_{\{0\}}(X_k(t))\Big|\geq\frac{n\epsilon_n}{3}\Big)=0.
    \end{split}
  \end{equation*}
  Since $x_k^n=0$ for $k=A_n+1,\dots,A_n+\lfloor(n-A_n)/2\rfloor$ and
  $x_k^n=1$ for $k=A_n+\lfloor (n-A_n)/2\rfloor+1,\dots, n$, these
  bounds follow by Lemma~\ref{lem:st} and a routine application of the quadratic
  Chebyshev's inequality. 

  By the Markov property and the bounds \eqref{eq:1}, \eqref{eq:2}, to
  prove the statement it is enough to show that
    \begin{equation*}
      \begin{split}
      &    \lim_{\ell\to+\infty}\lim_{n\to+\infty}
      \int d\mathcal{P}^n e^{i\sum_{h=1}^r \pi_n (X(t_h))(\psi_h)}
      \ind_{B^1_\ell}(\pi_n(X(t_{r}))) 
      \ind_{B^2_{\epsilon_n}}(X(t_r)) 
 \\      &\qquad \qquad \qquad \qquad 
      \times\prod_{k=1}^n \E_{X_k^n(t_r)}^{\theta_n}
      \Big[ e^{i \psi_{r+1} (X(t_{r+1}-t_r)) } \Big]\\
      &\quad 
      =
      \int\!\!\cdots\!\!\int\prod_{h=1}^{r+1} P_{t_h-t_{h-1}}(\eta_{h-1},d\eta_h)e^{i\eta_h(\psi_h)}.
    \end{split}
  \end{equation*}
  By remark~\ref{re:1} and again \eqref{eq:1}, \eqref{eq:2} this follows from 
  \begin{equation}
    \label{eq:4bis}
    \begin{split}
      & \lim_{\ell\to+\infty}\lim_{n\to+\infty}
      \int d\mathcal{P}^n e^{i\sum_{h=1}^r \pi_n (X(t_h))(\psi_h)}
      \\
      &\qquad\qquad
      \times \ind_{B_\ell^1}(\pi_n(X(t_r))) 
      \int
      P_{t_{r+1}-t_r}(\pi_n(X(t_r)),d\eta_{r+1})e^{i\eta_{r+1}(\psi_{r+1})}\\
      &= \lim_{\ell\to+\infty}\lim_{n\to+\infty}
      \mathbb{E}_\omega^n\Big[ e^{i\sum_{h=1}^r\omega(t_h)(\psi_h)}
      \\ &\qquad\qquad\times
      \ind_{B_\ell^1}(\omega(t_r)) 
      \int\!
      P_{t_{r+1}-t_r}(\omega(t_r),d\eta_{r+1})e^{i\eta_{r+1}(\psi_{r+1})}\Big]\\
      &\quad 
      =       \int\!\!\cdots\!\!\int\prod_{h=1}^{r+1} P_{t_h-t_{h-1}}(\eta_{h-1},d\eta_h)e^{i\eta_h(\psi_h)}.
    \end{split}
  \end{equation}
  Let $P_{t,\epsilon}$ be the approximation of $P_t$ defined in \eqref{eq:pte}.
  By Lemma~\ref{lem:pte}, \eqref{eq:4bis} holds once we show
    \begin{equation}
    \label{eq:5bis}
    \begin{split}
      &\lim_{\epsilon\to0}\lim_{n\to+\infty}
      \mathbb{E}_\omega^n\Big[ e^{i\sum_{h=1}^r\omega(t_h)(\psi_h)} 
      \int\!
      P_{t_{r+1}-t_r,\epsilon}(\omega(t_r),d\eta_{r+1})e^{i\eta_{r+1}(\psi_{r+1})}\Big]\\
      &\quad 
      =       \int\!\!\cdots\!\!\int\prod_{h=1}^{r+1} P_{t_h-t_{h-1}}(\eta_{h-1},d\eta_h)e^{i\eta_h(\psi_h)}.
    \end{split}
  \end{equation}
  Since the map $\Omega\ni\omega\mapsto\int P_{t,\epsilon}(\omega,d\eta)e^{i\eta(\psi)}\in\complex$ is continuous, by the tightness of the marginal of $\{\mathbb{P}^n_\omega\}$ at the times $t_1,\dots,t_n$  and the recursive assumption which identifies the cluster points of this law we can take the limit for $n\to+\infty$ above.
  Finally taking the limit $\epsilon\to0$ and using again Lemma~\ref{lem:pte} we conclude the proof of \eqref{eq:5bis}.
\end{proof}

\appendix

\section{Topological complements}
\label{sec:appa}

For completeness, we discuss some details on the state space $\Omega$
both as topological and measurable space.  Recalling that $\Omega$ has
been endowed with the weakest topology such that the map
$\omega\mapsto\omega(m\phi)$ is continuous for any $\phi\in C_0(0,1)$,
a basis of this topology is the given by (see
\cite[Proposition~2.4.1]{Me}) by the subsets of $\Omega$ of the form
$\{\omega\in\Omega\colon \omega(m\phi_1)\in
A_1,\dots,\omega(m\phi_n)\in A_n\}$, where $n\in\natural$, $\phi_i\in
C_0(0,1)$, and $A_i$ are open subsets of $\real$.  As follows by
\cite[Proposition~2.4.8]{Me} this topology is completely regular.

\begin{lemma}
  \label{lem:kl}
  A set $K \subset \Omega$ is precompact if and only if
  $\sup_{\omega\in K} \omega(m) < + \infty$. Moreover, for each
  $\ell\in\real_+$ the set $K_\ell:=\{\omega\in\Omega\colon
  \omega(m)\leq\ell\}$ is compact and the relative topology on
  $K_\ell$ is Polish.
\end{lemma}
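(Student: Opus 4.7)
The plan is to relate the topology on $\Omega$ to the vague topology on the Polish space $\mathcal M_+(0,1)$ of positive Radon measures on $(0,1)$ (tested against $C_K(0,1)$), and to use the mass bound $\omega(m)\le\ell$ to show the two topologies agree on $K_\ell$. Every assertion in the lemma then reduces to standard vague-convergence facts, combined with the Banach-Steinhaus theorem for the necessity direction.

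\emph{Necessity.} For $\omega\in\Omega$, the map $L_\omega(\phi):=\omega(m\phi)$ defines a bounded linear functional on the Banach space $C_0(0,1)$ whose norm equals $\omega(m)$ (attain this using cutoffs $\phi_\delta\in C_0(0,1)$ with $\phi_\delta=1$ on $[\delta,1-\delta]$, so that $\omega(m\phi_\delta)\uparrow \omega(m)$ by monotone convergence). If $K$ is precompact, every net in $K$ admits a subnet converging in $\Omega$, hence with $L_{\omega_\alpha}(\phi)\to L_{\omega}(\phi)$ for each $\phi\in C_0(0,1)$. Banach-Steinhaus then gives $\sup_\alpha \|L_{\omega_\alpha}\|=\sup_\alpha\omega_\alpha(m)<+\infty$ along the subnet; a sequence in $K$ with $\omega_n(m)\to+\infty$ would contradict this on every subnet, so $\sup_K \omega(m)<+\infty$.

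\emph{Identification of topologies on $K_\ell$.} Since every $\psi\in C_K(0,1)$ equals $m(\psi/m)$ with $\psi/m\in C_K(0,1)\subset C_0(0,1)$, the $\Omega$-topology is finer than the relative vague topology. The crucial step is the reverse inclusion on $K_\ell$: given $\phi\in C_0(0,1)$ and an increasing cutoff $\chi_\delta\in C_K(0,1)$ with $\chi_\delta=1$ on $[\delta,1-\delta]$, the product $m\phi\chi_\delta$ lies in $C_K(0,1)$, and for every $\omega\in K_\ell$
\begin{equation*}
  |\omega(m\phi)-\omega(m\phi\chi_\delta)|
  \le \omega(m)\,\sup_{y\in(0,\delta)\cup(1-\delta,1)}|\phi(y)|
  \le \ell\,\epsilon(\delta),
\end{equation*}
with $\epsilon(\delta)\to 0$ as $\delta\to 0$ because $\phi\in C_0(0,1)$. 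Thus $\omega\mapsto\omega(m\phi)$ is a uniform limit on $K_\ell$ of vaguely continuous functions and is therefore vaguely continuous on $K_\ell$. This is the main obstacle of the proof: the function $\omega\mapsto\omega(m)$ is not $\Omega$-continuous on all of $\Omega$ (witness $\omega_n=n\delta_{1/n}$), and it is precisely the uniform mass bound on $K_\ell$ that tames it.

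\emph{Compactness, Polishness, and sufficiency.} From the previous step, $\omega\mapsto \omega(m)=\sup_\delta\omega(m\phi_\delta)$ is a countable supremum of vaguely continuous (on $K_\ell$) functions, hence vaguely lsc, so $K_\ell$ is vaguely closed; combined with vague boundedness ($\omega(K)\le\ell/\inf_K m$ for each compact $K\subset(0,1)$) and the fact that integer-valuedness is preserved under vague limits, this gives that $K_\ell$ is a closed subset of the Polish space $\mathcal M_+(0,1)$, hence compact in the vague topology, and Polish in the relative vague topology. By the identification of the two topologies, $K_\ell$ is $\Omega$-compact and Polish in the relative $\Omega$-topology. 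Finally, if $\sup_{\omega\in K}\omega(m)\le\ell$ then $K\subset K_\ell$, so $K$ is precompact, completing the proof.
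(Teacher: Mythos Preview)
Your proof is correct and takes a genuinely different route from the paper's.

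For necessity, you invoke Banach--Steinhaus: the functionals $L_\omega(\phi)=\omega(m\phi)$ on $C_0(0,1)$ have norm $\omega(m)$, and precompactness of $K$ makes $\{L_\omega:\omega\in K\}$ pointwise bounded (continuous images of precompact sets are bounded), whence uniform boundedness. The paper instead argues by contradiction with an explicit construction: given $\omega_n(m)\to+\infty$, it builds by hand a $\phi\in C_0(0,1)$ with $\omega_n(m\phi)\to+\infty$. Your argument is cleaner; the paper's is self-contained.

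For the ``moreover'' part, your key observation is that on $K_\ell$ the $\Omega$-topology coincides with the vague topology on $\mathcal M_+(0,1)$ (via the uniform approximation $|\omega(m\phi)-\omega(m\phi\chi_\delta)|\le \ell\sup_{(0,\delta)\cup(1-\delta,1)}|\phi|$), after which compactness and Polishness are read off from standard facts about the vague topology (vague boundedness criterion for relative compactness, vague lower semicontinuity of $\omega\mapsto\omega(m)$, vague closedness of integer-valued measures, and Polishness of $\mathcal M_+(0,1)$). The paper instead writes down an explicit metric $d(\omega,\eta)=\sum_k 2^{-k}(1\wedge|\omega(m\psi_k)-\eta(m\psi_k)|)$ on $K_\ell$, checks directly that it induces the relative topology, exhibits a countable dense set, and proves sequential compactness by a diagonal argument on the restrictions to $[\delta,1-\delta]$. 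Your route is shorter and more conceptual; the paper's explicit metric, however, is reused later in the proof of Lemma~\ref{t:bc}, so its construction is not wasted effort in the paper's economy.
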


\begin{proof}
  We first show that for each $\ell\in\real_+$ the set
  $K_\ell:=\{\omega\in\Omega\colon \omega(m)\leq\ell\}$ is compact and
  the relative topology on $K_\ell$ is Polish.
  Let $\Psi=\{\psi_k\}$ be a countable dense subset of $C_0(0,1)$ and
  set
  \begin{equation}
    \label{eq:dis}
    d(\omega,\eta)
    :=\sum_{k}\frac{1}{2^k}(1 \wedge|\omega(m\psi_k)-\eta(m\psi_k)|).
  \end{equation}
  We next prove that $d$ is a distance on $K_\ell$ inducing the relative topology.
  To this end, it is enough to show that given any open set $A\subset K_\ell$ and $\omega\in A$ there exists a $d$-ball centered in $\omega$ and contained in $A$. 
  From the very definition of the topology, $A$ is of the form
  \begin{equation*}
    A
    =\bigcup\big\{\eta\in K_\ell\colon \eta(m\phi_1)\in U_1,\dots,\eta(m\phi_n)\in U_n\big\}
  \end{equation*}
  where $n\in\natural$, $\phi_i\in C_0(0,1)$, and  $U_i$ are open subsets of $\real$.
  If $\omega\in A$ then $\omega\in\bigcap_{i=1}^n\{\eta\in K_\ell\colon \eta(m\phi_i)\in U_i\}$ for some $n\in\natural$, $\phi_i$, and $U_i$.
  Letting $\delta_i:=\dist(\omega(m\phi_i),U_i^c)$ we now choose $\psi_{k(i)}\in\Psi$ such that $\|\psi_{k(i)}-\phi_i \|_\infty <\delta_i/\ell$ and $0<\rho<2^{-k(i)}\delta_i/\ell$, $i=1,\dots,n$.
  Then $\omega\in\{\eta\in K_\ell\colon d(\eta,\omega)<\rho\}\subset\{\eta\in K_\ell\colon \eta(m\phi_i)\in U_i\}$.

  To show that $K_\ell$ is separable it is enough to consider the collection of $\omega\in K_\ell$ which charges only rational points of $(0,1)$.
  
  To prove compactness of $K_\ell$, we first observe that it is enough to show its sequencial compactness.
  Let $\{\omega_n\}\subset K_\ell$ be a sequence.
  By considering the restriction of $\omega_n$ to $[\delta,1-\delta]\subset(0,1)$, using the compactness of Radon measures with uniformly bounded mass on $[\delta,1-\delta]$, and a diagonal argument, we can find a Radon measure $\omega$ on $(0,1)$ and a subsequence (not-relabeled)  $\{\omega_n\}$ vaguely convergent to $\omega$.
  Let $m_k\in C_K(0,1)$ be such that $m_k\uparrow m$.
  Then, by monotone convergence, $\omega(m)=\lim_k\omega(m_k)=\lim_k\lim_n\omega_n(m_k)\leq\ell$.
  Hence $\omega\in\Omega$.
  Finally by dominated convergence $\omega_n(m\phi)\to\omega(m\phi)$ for each $\phi\in C_0(0,1)$.
  Hence $\omega_n\to\omega$ in the topology of $K_\ell$.

  To conclude the proof, we next show that if
  $K\subset\Omega$ is precompact then there exists $\ell\in\real_+$ such
  that $K\subset K_\ell$. We argue by contradiction assuming that there
  exists a sequence $\{\omega_n\}\subset K$ such that $\omega_n(m)\to
  +\infty$; we will then construct a function $\phi\in C_0(0,1)$ such that
  $\omega_n(m\phi) \to +\infty$.
  By precompactness of $K$, we can assume $\omega_n\to \omega$ for
  some $\omega\in \Omega$. Let $\{x^n_k\}$, $k=1,\ldots, |\omega_n|$,
  be such that $\omega_n=\sum_k \delta_{x^n_k}$. Then either
  $\varliminf_n \inf_k x^n_k = 0$ or $\varlimsup_n \sup_k x^n_k =
  1$. We assume that the first alternative takes place and choose a
  subsequence of $\{x^n_k\}_{k}$ such that $\frac 12\ge x^n_1\ge x^n_2
  \ge \cdots$. We now set $\alpha_n:=\sum_k x^n_k$ and choose a
  subsequence such that $\alpha_n\uparrow +\infty$. It is not
  difficult to show that we can pick $h_n\to \infty$ such that
  $\sum_{k\le h_n} x^n_k \ge \alpha_n/2$ and $x^n_{h_n}\downarrow
  0$. Finally, let $\phi\in C_0(0,1)$ be increasing
  on $(0,1/2]$ and such that $\phi(x^n_{h_n})
  =\sqrt{1/\alpha_n}$. Then, as $m(x)\ge x/2$ for $x\in(0,1/2]$,
  \begin{equation*}
    \omega_n(m\phi)\ge \sum_{k} m(x^n_k) \phi(x^n_k)\ge 
    \frac 12 \sum_{k=1}^{h_n} x^n_k \phi(x^n_k) \ge 
    \frac 12 \, \phi(x^n_{h_n})  \sum_{k=1}^{h_n} x^n_k  \ge 
    \frac 14 \sqrt{\alpha_n}
  \end{equation*}
  which completes the proof.
\end{proof}

\begin{lemma}
  \label{t:bc}
  Let $\mc C$ be the family of subsets of $\Omega$ of the form 
  \begin{equation*}
    C =\big\{ \omega\in \Omega\colon  \omega(U_1) = n_1,\ldots,\omega(U_k)
    = n_k\big\}, 
  \end{equation*}
  for some $k\in\natural$, $n_i\in\integer_+$, and $U_i$ open subset of $(0,1)$ such that $U_i\subset\subset(0,1)$.
  Then $\mc C$ is $\pi$-system that generates the Borel
  $\sigma$-algebra $\mc B (\Omega)$.
\end{lemma}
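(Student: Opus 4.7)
The plan is to verify the three ingredients separately: (i) $\mathcal{C}$ is a $\pi$-system, (ii) $\mathcal{C}\subset\mathcal{B}(\Omega)$, and (iii) $\mathcal{B}(\Omega)\subset\sigma(\mathcal{C})$. The $\pi$-system property is immediate: the intersection of two cylinder sets based on open sets $U_1,\dots,U_k$ and $V_1,\dots,V_j$ with the respective occupation numbers is again a cylinder set imposing all constraints simultaneously.

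For (ii), given open $U\subset\subset(0,1)$, I would construct $\phi_n\in C_K(0,1)$ with $\phi_n\uparrow\mathbf{1}_U$, e.g.\ by $\phi_n(x):=1\wedge\bigl(n\,\dist(x,(0,1)\setminus U)\bigr)$, which has compact support inside $\bar U\subset\subset(0,1)$. Since $m$ is bounded below on $\bar U$, the ratio $\phi_n/m$ lies in $C_K(0,1)\subset C_0(0,1)$, so $\phi_n=m\cdot(\phi_n/m)$ and $\omega\mapsto\omega(\phi_n)$ is continuous by the very definition of the topology on $\Omega$. As $\omega(\bar U)$ is finite (because $\omega$ is Radon and $\bar U\subset\subset(0,1)$), monotone convergence gives $\omega(U)=\lim_n\omega(\phi_n)$, which is Borel; therefore $\{\omega(U)=n\}$ and all finite intersections lie in $\mathcal{B}(\Omega)$.

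For (iii), which is the main difficulty since $\Omega$ is not Polish and open sets need not be countable unions of subbasis elements, I would exploit that $\Omega=\bigcup_\ell K_\ell$, where each $K_\ell$ is compact by Lemma~\ref{lem:kl} and closed in the Hausdorff space $\Omega$ (hence Borel), so every Borel set decomposes as $B=\bigcup_\ell(B\cap K_\ell)$ with $B\cap K_\ell\in\mathcal{B}(K_\ell)$. Now $K_\ell$ is Polish and metrized by the distance $d$ in \eqref{eq:dis}, which uses only the countable family of continuous functions $\omega\mapsto\omega(m\psi_k)$, $k\in\mathbb{N}$; hence $\mathcal{B}(K_\ell)$ is generated by these, and it suffices to show that $\omega\mapsto\omega(m\psi)$ is $\sigma(\mathcal{C})$-measurable for each $\psi\in C_0(0,1)$. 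To that end I would first treat $\varphi\in C_K(0,1)$ by approximating it uniformly on its support by simple functions $\varphi_N=\sum_i c_i^N\mathbf{1}_{U_i^N}$ with pairwise disjoint open $U_i^N\subset\subset(0,1)$ (finite unions of open dyadic intervals inside $\supp\varphi$), so that $\omega(\varphi_N)=\sum_i c_i^N\omega(U_i^N)\in\sigma(\mathcal{C})$ and $|\omega(\varphi)-\omega(\varphi_N)|\le\|\varphi-\varphi_N\|_\infty\,\omega(\supp\varphi)\to 0$. For general $\psi\in C_0(0,1)$ the function $m\psi$ need not have compact support, but choosing cutoffs $g_n\in C_K(0,1)$ with $0\le g_n\uparrow 1$ yields $m\psi g_n\in C_K(0,1)$, and dominated convergence using $|m\psi g_n|\le\|\psi\|_\infty m$ together with $\omega(m)<\infty$ gives $\omega(m\psi)=\lim_n\omega(m\psi g_n)$, which is therefore $\sigma(\mathcal{C})$-measurable. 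The hardest step is this last reduction, in particular the passage from the topological subbasis to the full Borel $\sigma$-algebra, which is resolved via the $\sigma$-compact/Polish decomposition of $\Omega$.
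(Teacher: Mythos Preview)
Your approach mirrors the paper's: both use the $\sigma$-compact decomposition $\Omega=\bigcup_\ell K_\ell$, the Polish metric $d$ on each $K_\ell$, and simple-function approximation to show that $\omega\mapsto\omega(m\psi)$ is $\sigma(\mathcal C)$-measurable. Two details need repair.

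First, uniform approximation of $\varphi\in C_K(0,1)$ by $\sum_i c_i^N\mathbf 1_{U_i^N}$ with \emph{pairwise disjoint open} $U_i^N$ fails: the complement of $\bigcup_i U_i^N$ is closed and nonempty (it contains, e.g., the dyadic endpoints), and on it $\varphi_N=0$ while $\varphi$ need not vanish, so $\|\varphi-\varphi_N\|_\infty\not\to 0$ in general and your bound $|\omega(\varphi)-\omega(\varphi_N)|\le\|\varphi-\varphi_N\|_\infty\,\omega(\supp\varphi)$ is useless. The paper instead builds its simple functions from half-open intervals $[x_i,x_{i+1})$, noting (implicitly) that $\omega([a,b))=\lim_n\omega((a-\tfrac1n,b))$ is $\sigma(\mathcal C)$-measurable. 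Alternatively, drop disjointness and use the layer-cake sum $\varphi_N=\tfrac{\|\varphi\|_\infty}{N}\sum_{k=1}^N\mathbf 1_{\{\varphi>k\|\varphi\|_\infty/N\}}$ (for $\varphi\ge 0$), whose level sets are open and relatively compact.

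Second, from $B\cap K_\ell\in\mathcal B(K_\ell)$ together with the $\sigma(\mathcal C)$-measurability of the maps $\omega\mapsto\omega(m\psi_k)$ you only obtain $B\cap K_\ell=A_\ell\cap K_\ell$ with $A_\ell\in\sigma(\mathcal C)$; to conclude $B=\bigcup_\ell(A_\ell\cap K_\ell)\in\sigma(\mathcal C)$ you still need $K_\ell\in\sigma(\mathcal C)$, which you have not addressed. The paper establishes this explicitly as its first step, via $K_\ell=\bigcap_k\{\omega(m_k)\le\ell\}$ for simple $m_k\uparrow m$. It follows from your own ingredients once the approximation above is fixed: $\omega(m)=\sup_n\omega(mg_n)$ with $mg_n\in C_K(0,1)$.
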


\begin{proof}
  The family $\mathcal{C}$ is obviously closed for finite intersections.
  To show that $\sigma(\mathcal{C})\subset\mathcal{B}(\Omega)$ it is enough to show that for each $U\subset\subset(0,1)$ open and $n\in\natural$ the set $\{\omega\in\Omega\colon \omega(U)=n\}$ is Borel subset of $\Omega$.
  Let $\phi_k\in C_K(0,1)$ such that $\phi_k\uparrow\ind_U$.
  Then
  \begin{equation*}
    \{\omega\in\Omega\colon \omega(U)=n\}
    =\bigcap_m\bigcup_k\bigcap_{h\geq k} \big\{\omega\in\Omega\colon \omega(\phi_h)\in(n-\tfrac{1}{m},n+\tfrac{1}{m})\big\}
    \in\mathcal{B}(\Omega).
  \end{equation*}

  To prove the inclusion $\sigma(\mathcal{C})\supset\mathcal{B}(\Omega)$, let us first prove that, given $\ell\in\natural$, the set $K_\ell:=\{\omega\in\Omega\colon \omega(m)\leq\ell\}$ belongs to $\sigma(\mathcal{C})$.
  To this end, we say that a function $\phi:(0,1)\to\real$ is simple if it has the form $\phi=\sum_{i=1}^{n-1}\alpha_i\ind_{[x_i,x_{i+1})}$ for some $n\in\natural$, $\alpha_i\in\real$, and $0<x_1<\dots<x_n<1$.
  Then it straightforward to show that, for each simple function $\phi$, the map $\omega\mapsto\omega(\phi)$ is $\sigma(\mathcal{C})$-measurable.
  Pick now a sequence $m_k$ of simple functions such that $m_k\uparrow m$.
  Then, by monotone convergence, $K_\ell=\bigcap_k \{\omega\in\Omega\colon \omega(m_k)\leq\ell\}\in\sigma(\mathcal{C})$.

  % Consider now the set $K_\ell\subset\Omega$ endowed with the relative topology.
  % Let $\Phi$ be a countable dense subset of $C_0(0,1)$ and $\mathcal{I}$ be the collection of open intervals on $\real$ with rational end-points.
  % Let $\mathcal{D}_\ell$ the collection of subsets of $ K_\ell$ of the form $\{\omega\in  K_\ell\colon \omega(m\phi_1)\in I_1,\dots,\omega(m\phi_n)\in I_n\}$ for some $n\in\natural$, $\phi_i\in \Phi$, $I_i\in\mathcal{I}$.
  % We claim that $\mathcal{D}_\ell$, is a basis for the topology of $ K_\ell$.
  % It is enough to show that given any open set $A\subset K_\ell$ and $\omega\in A$ there exists $B\in\mathcal{D}_\ell$ such that $\omega\in B\subset A$.
  % From the very definition of the topology $A$ is of the form
  % \begin{equation*}
  %   A
  %   =\bigcup\big\{\eta\in K_\ell\colon \eta(\psi_1)\in U_1,\dots,\eta(\psi_n)\in U_n\big\}
  % \end{equation*}
  % where $n\in\natural$, $\psi_i/m\in C_0(0,1)$, and  $U_i$ are open subsets of $\real$.
  % If $\omega\in A$ then $\omega\in
  % \cap_{i=1}^n\{\eta\in K_\ell\colon \eta(\psi_i)\in U_i\}$
  % for some $n\in\natural$, $\psi_i$, and $U_i$.
  % Letting $\delta_i:=\dist(\omega(\psi_i),U_i^c)$ we now choose $\phi_i\in\Phi$ such that $\|\phi_i-\psi_i/m \|_\infty <\delta_i/(3\ell)$ and
  % $I_i\in\mathcal{I}$ such that  $\dist(I_i,U_i^c)>\delta_i/3$, $\dist(\omega(\psi_i),I_i^c)>\delta_i/3$.
  % Then $\omega\in\{\eta\in K_\ell:\eta(m\phi_i)\in I_i\}\subset\{\eta\in K_\ell\colon \eta(\psi_i)\in U_i\}$, which completes the proof of the claim.

  % In view of the claim just proven,
  
  By observing that an open set $A\subset\Omega$ can be written as $A=\bigcup_{\ell\in\natural}\big(A\cap K_\ell\big)$ and using Lemma~\ref{lem:kl}, to conclude the proof it is enough to show that the distance $d$ in \eqref{eq:dis} is $\sigma(\mathcal{C})\times \sigma(\mathcal{C})$ measurable.
  % \begin{equation*}
  %   A
  %   =\bigcup_{\ell\in\natural}\big(A\cap K_\ell\big)
  %   =\bigcup_{\ell\in\natural} \bigcup_{\substack{B\in\mathcal{D}_\ell \\ B\subset A}}B.
  % \end{equation*}
  %Since $\mathcal{D}_\ell$ is countable, to conclude the proof it is enough to show that for each 
  To this end given $\psi\in \Psi$ and $U\in\mathcal{B}(\real)$ we show that the set $\{\omega\in  K_\ell\colon \omega(m\psi)\in U\}$ belongs to $\sigma(\mathcal{C})$.
  Pick a sequence $\phi_k$ of simple functions such that $\phi_k/m\to\psi$ uniformly in $(0,1)$.
  In particular $\omega(\phi_k)\to\omega(m\psi)$ uniformly for $\omega\in  K_\ell$.
  Then 
  \begin{equation*}
   \big\{\omega\in  K_\ell\colon \omega(m\psi)\in U\big\}
    =\bigcup_k \bigcap_{h\geq k}\big\{\omega\in  K_\ell\colon  \omega(\phi_h)\in U\big\}
    \in\sigma(\mathcal{C}).
  \end{equation*}
\end{proof}

\begin{lemma}
  \label{lem:chf}
  Let $P_1$, $P_2$ be two probabilities on $(\Omega,\mathcal{B}(\Omega))$ such that
  \begin{equation*}
    \int P_1(d\omega)e^{i\omega(\psi)}
    =\int P_2(d\omega)e^{i\omega(\psi)},
    \qquad\qquad
    \psi\in C_K(0,1).
  \end{equation*}
  Then $P_1=P_2$.
\end{lemma}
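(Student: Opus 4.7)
The plan is to reduce the equality of $P_1$ and $P_2$ on $\mathcal{B}(\Omega)$ to equality on the $\pi$-system $\mathcal{C}$ furnished by Lemma~\ref{t:bc}, by means of Dynkin's $\pi$-$\lambda$ theorem. Thus it suffices to prove that for every $k\in\natural$, $n_1,\dots,n_k\in\integer_+$, and open sets $U_1,\dots,U_k\subset\subset(0,1)$,
\begin{equation*}
  P_1\big(\omega(U_1)=n_1,\dots,\omega(U_k)=n_k\big)
  =P_2\big(\omega(U_1)=n_1,\dots,\omega(U_k)=n_k\big).
\end{equation*}

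First, I would upgrade the single characteristic functional identity to a joint one. Given $\psi_1,\dots,\psi_k\in C_K(0,1)$ and $t_1,\dots,t_k\in\real$, the function $\sum_j t_j\psi_j$ belongs to $C_K(0,1)$ and $\omega\mapsto\sum_j t_j\omega(\psi_j)=\omega(\sum_j t_j\psi_j)$, so the assumption applied to $\sum_j t_j\psi_j$ yields equality of the joint characteristic functions of the vector $\big(\omega(\psi_1),\dots,\omega(\psi_k)\big)$ under $P_1$ and $P_2$. By the uniqueness of characteristic functions on $\real^k$, the joint laws of $\big(\omega(\psi_1),\dots,\omega(\psi_k)\big)$ under $P_1$ and $P_2$ coincide.

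Next, I would pass from $C_K(0,1)$ test functions to indicators of the sets $U_j$ by a monotone approximation. For each $j$, since $U_j\subset\subset(0,1)$ is open, one can choose $\phi_{j,n}\in C_K(0,1)$ with $0\le\phi_{j,n}\le\ind_{U_j}$ and $\phi_{j,n}\uparrow\ind_{U_j}$. For every $\omega\in\Omega$, by monotone convergence $\omega(\phi_{j,n})\uparrow\omega(U_j)$; moreover, since $U_j\subset\subset(0,1)$, we have $m\ge c_j$ on $U_j$ for some $c_j>0$, hence $\omega(U_j)\le c_j^{-1}\omega(m)<+\infty$. Consequently the $\real^k$-valued vector $\big(\omega(\phi_{1,n}),\dots,\omega(\phi_{k,n})\big)$ converges pointwise (hence in distribution under each $P_i$) to $\big(\omega(U_1),\dots,\omega(U_k)\big)$. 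The joint law of the latter is therefore the same under $P_1$ and $P_2$, and evaluating on the atom $\{n_1\}\times\cdots\times\{n_k\}$ yields the required equality on $\mathcal{C}$.

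The argument is essentially routine and I do not foresee a serious obstacle: the only point that requires care is the \emph{a priori} finiteness $\omega(U_j)<+\infty$, which is exactly where the tempered\-ness condition $\omega(m)<+\infty$ built into the definition of $\Omega$, together with $U_j\subset\subset(0,1)$, comes in. Once this is in hand, the combination of the joint-characteristic-function step, the monotone approximation, and the $\pi$-$\lambda$ theorem applied to the $\pi$-system $\mathcal{C}$ of Lemma~\ref{t:bc} delivers $P_1=P_2$ on $\mathcal{B}(\Omega)$.
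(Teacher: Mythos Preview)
Your argument is correct and follows precisely the route the paper indicates: reduce to the $\pi$-system $\mathcal{C}$ of Lemma~\ref{t:bc}, then identify the joint laws of $(\omega(U_1),\dots,\omega(U_k))$ via characteristic functions and a monotone approximation of $\ind_{U_j}$ by $C_K(0,1)$ functions. The paper simply cites \cite[Thm.~\S29.14]{Fr:Gr} for this standard step, which is exactly what you have spelled out.
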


\begin{proof}
    In view of Lemma~\ref{t:bc}, the proof is achieved by the argument in  \cite[Theorem \S 29.14]{Fr:Gr}. 
\end{proof}

\begin{lemma}
  \label{lem:pt0}
  For each $t\geq0$ and $\omega\in\Omega$ there exists a unique probability
  $P_t^0(\omega,\cdot)$ on $(\Omega,\mathcal{B}(\Omega))$ such that
  \begin{equation}
    \label{eq:pt01}
  \int P_t^0(\omega,d\eta) e^{i\eta(\psi)}
  =\prod_{x\in\omega}\int p_t^0(x,dy)e^{i\psi(y)},
  \qquad
  \psi\in C_K(0,1).
\end{equation}
\end{lemma}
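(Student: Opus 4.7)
My plan is to construct $P_t^0(\omega,\cdot)$ explicitly as the law of a suitable family of independent random variables, and then invoke Lemma~\ref{lem:chf} for uniqueness. Fix an enumeration $\omega = \sum_k \delta_{x_k}$ with $\sum_k m(x_k)=\omega(m)<+\infty$, and on some auxiliary probability space let $\{Y_k\}$ be independent random variables with $Y_k$ distributed according to $p_t^0(x_k,\cdot)$ on $[0,1]$. Define the random measure $\eta := \sum_k \delta_{Y_k}\,\ind_{(0,1)}(Y_k)$ on $(0,1)$, and take $P_t^0(\omega,\cdot)$ to be its law.

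The first technical point is to verify that $\eta\in\Omega$ almost surely. For this I would apply It\^o's formula to $m(x)=x(1-x)$ (which satisfies $m''=-2$) and to the absorbed Brownian motion. Since $X$ is constant after $\tau$, the stopping identity together with the boundedness of $m'$ gives
\begin{equation*}
  \E_x^0[m(X(t))] = m(x) - \E_x^0[\tau\wedge t] \leq m(x).
\end{equation*}
Summing over $k$, $\E[\eta(m)]\leq\omega(m)<+\infty$, whence $\eta(m)<+\infty$ almost surely and $\eta\in\Omega$.

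The second step is to check the characteristic functional identity \eqref{eq:pt01}. Since $\psi\in C_K(0,1)$ vanishes at $0$ and $1$, $e^{i\psi(Y_k)}$ coincides with $e^{i\psi(Y_k)\ind_{(0,1)}(Y_k)}$, so by independence
\begin{equation*}
  \E\big[e^{i\eta(\psi)}\big] = \prod_k \int p_t^0(x_k,dy)\,e^{i\psi(y)}.
\end{equation*}
The only point to justify is absolute convergence of this infinite product. Writing the $k$-th factor as $1+w_k$, the bound $|e^{i\psi}-1|\leq 2\ind_{\supp\psi}$ together with the fact that $\supp\psi$ is a compact subset of $(0,1)$ (so $X(t)\in\supp\psi$ forces $\tau>t$) and Markov's inequality $\mathbf{P}_{x_k}^0(\tau>t)\leq m(x_k)/t$ yield $\sum_k |w_k|\leq 2\omega(m)/t<+\infty$. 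Note also that permuting the enumeration of $\omega$ merely permutes the $Y_k$, so the law of $\eta$ depends only on $\omega$.

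Uniqueness is then immediate from Lemma~\ref{lem:chf}, since the characteristic functionals $\psi\mapsto\int P(d\eta)e^{i\eta(\psi)}$, $\psi\in C_K(0,1)$, determine any probability on $(\Omega,\mathcal{B}(\Omega))$. The main obstacle is the temperedness step $\eta\in\Omega$ a.s.; the choice of the weight $m(x)=x(1-x)$ in the definition of $\Omega$ is precisely what makes the It\^o computation work, since $m$ vanishes at the endpoints and its generator $\tfrac12 m''=-1$ under absorbed Brownian motion produces the clean bound $\E_x^0[m(X(t))]\leq m(x)$.
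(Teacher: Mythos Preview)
Your proof is correct and follows essentially the same construction as the paper: build the product measure of the $p_t^0(x_k,\cdot)$, push it forward to $\Omega$ via $y\mapsto\sum_k\delta_{y_k}\ind_{(0,1)}(y_k)$, and invoke Lemma~\ref{lem:chf} for uniqueness. The one genuine difference is how you verify $\eta\in\Omega$ almost surely. The paper uses the Markov bound $\mathbf{P}_{x_k}^0(\tau>t)\le m(x_k)/t$ together with Borel--Cantelli to conclude that, for $t>0$, only finitely many $Y_k$ lie in $(0,1)$ a.s.; this gives the stronger statement $|\eta|<\infty$ a.s.\ and makes the map into $\Omega$ and the infinite product immediately well-defined. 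You instead apply It\^o's formula to $m$ to get $\E[\eta(m)]\le\omega(m)$, hence $\eta(m)<\infty$ a.s. Your route is perfectly valid and highlights the martingale role of $m$, but note that you end up using the same Markov bound anyway when justifying the absolute convergence of the product, so you could have obtained the paper's finiteness statement for free at that point. One small omission: you do not discuss measurability of the map $(Y_k)_k\mapsto\eta$ into $(\Omega,\mathcal B(\Omega))$, which the paper checks explicitly; this is routine but worth a line.
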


\begin{proof}
  As $P^0_0(\omega,\cdot)=\delta_\omega$, we consider $t>0$.
  Let $(x_k)_{k=1,\ldots, |\omega|}\in (0,1)^{|\omega|}$ be such that 
  $\omega=\sum_k \delta_{x_k}$. 
  Let $Q$ be the product measure on $\Xi := \prod_k [0,1]$ with
  marginals $p_t^0(x_k,\cdot)$. Elements of $\Xi$ are denoted by 
  $y=(y_k)$. Let us first prove that 
  \begin{equation}
    \label{FF}
    Q \big( F \big)=1, 
    \qquad
    F := \big\{ y\in \Xi \colon \textrm{ $y_k\in (0,1)$ for finitely
      many $k$} \big\}
  \end{equation}
  Indeed, 
  \begin{equation*}
    \begin{split}
     &\sum_k Q\big(y_k\in (0,1)\big) 
    =\sum_k \mathbf{P}_{x_k}^0\big( X(t)\in (0,1) \big)=
    \sum_k \mathbf{P}_{x_k}^0\big( \tau>t\big) 
    \\
    & \qquad \leq \frac 1t \sum_k \E_{x_k}^0\big( \tau \big)
    =\frac 1t \sum_k m(x_k) = \frac {\omega(m)}t < +\infty 
    \end{split}
  \end{equation*}
  and \eqref{FF} follows by Borel-Cantelli lemma. 

  We now define the map $\pi\colon \Xi\to \Omega$ by $\pi(y) =
  \sum_k \delta_{y_k}(\cdot \cap (0,1))$ if $y\in F$ and $\pi(y)=0$ if
  $y\not\in F$. Observe that $\pi$ is $\mc B(\Omega)$-$\mc B(\Xi)$ measurable since $F\in \mc B(\Xi)$ and the map $F\ni y \mapsto
  \pi(y)$ is a pointwise limit of continuos maps.
  
  By setting $P^0_t(\omega,\cdot):= Q\circ \pi^{-1}$, it satisfies
  \eqref{eq:pt01} in view of \eqref{FF} and the change of variable
  formula.   
  Uniqueness follows by Lemma~\ref{lem:chf}.
\end{proof}

Given $T>0$ we let $\Omega_T:=C([0,T];\Omega)$ be the set of the $\Omega$ valued continuous functions.
We consider  $\Omega_T$ endowed with the compact-open (uniform) topology and the corresponding Borel $\sigma$-algebra $\mathcal{B}(\Omega_T)$.
Let $\mathcal{C}_T$ be the family of subsets of $\Omega_T$ of the form 
\begin{equation*}
  C =\big\{ \omega\in \Omega_T\colon  \omega(t_1)\in B_1,\ldots,\omega(t_k)\in B_k\big\}, 
\end{equation*}
for some $k\in\natural$, $0\leq t_1<\dots<t_k\leq T$, and $B_i\in\mathcal{B}(\Omega)$, $i=1,\dots, k$.

\begin{lemma}
  \label{lem:pis}
  The family $\mc C_T$ is $\pi$-system that generates the Borel
  $\sigma$-algebra $\mc B (\Omega_T)$.
  In particular, a probability on
  $(\Omega_T,\mathcal{B}(\Omega_T)$ is uniquely characterized by its
  finite dimensional distributions.
\end{lemma}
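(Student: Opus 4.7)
My plan is to verify separately that $\mathcal{C}_T$ is closed under finite intersections and that $\sigma(\mathcal{C}_T)=\mathcal{B}(\Omega_T)$; the uniqueness assertion for probabilities then follows from Dynkin's $\pi$--$\lambda$ theorem, since two probabilities with the same finite dimensional distributions agree on $\mathcal{C}_T$, hence on the generated $\sigma$-algebra.

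The $\pi$-system property is a routine bookkeeping: given two cylinders $C=\{\omega(t_1)\in B_1,\ldots,\omega(t_k)\in B_k\}$ and $C'=\{\omega(s_1)\in B_1',\ldots,\omega(s_m)\in B_m'\}$, their intersection is the cylinder on the merged time set $\{t_1,\ldots,t_k\}\cup\{s_1,\ldots,s_m\}$, with base $B_i$, $B_j'$, or $B_i\cap B_j'$ according to whether a time appears only in the first list, only in the second, or in both. The inclusion $\sigma(\mathcal{C}_T)\subset\mathcal{B}(\Omega_T)$ is immediate: each evaluation map $\mathrm{ev}_t\colon\Omega_T\to\Omega$ is continuous, hence Borel, so every cylinder set is a finite intersection of preimages of Borel sets.

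The reverse inclusion I would establish by a Polish stratification of $\Omega_T$. Set $\Omega_T^\ell:=\{\omega\in\Omega_T\colon \sup_{t\in[0,T]}\omega(t)(m)\leq\ell\}$. Since paths are continuous, $\omega([0,T])$ is compact in $\Omega$ and, by Lemma~\ref{lem:kl}, contained in some $K_\ell$; hence $\Omega_T=\bigcup_\ell\Omega_T^\ell$. Again by path continuity $\sup_{t\in[0,T]}\omega(t)(m)=\sup_{t\in\rational\cap[0,T]}\omega(t)(m)$, and on approximating $m$ from below by $m\phi_k$ with $\phi_k\in C_0(0,1)$, $\phi_k\uparrow 1$, the map $\omega\mapsto\omega(m)$ becomes a countable supremum of continuous evaluations on $\Omega$, hence Borel; therefore $\Omega_T^\ell\in\sigma(\mathcal{C}_T)$. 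On $\Omega_T^\ell$ the inherited topology coincides with that of uniform convergence for the Polish metric $d$ of Lemma~\ref{lem:kl}, so $\Omega_T^\ell$ is Polish. Using continuity once more, every open ball $\{\eta\colon\sup_{t\in[0,T]}d(\omega(t),\eta(t))<r\}$ equals $\bigcap_{t\in\rational\cap[0,T]}\{\eta\colon d(\omega(t),\eta(t))<r\}$, a countable intersection of cylinders with Borel bases; since open balls around a countable dense subset generate the Borel $\sigma$-algebra of a Polish space, we obtain $\mathcal{B}(\Omega_T^\ell)\subset\sigma(\mathcal{C}_T)$.

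The main obstacle is verifying that the restriction of $\mathcal{B}(\Omega_T)$ to $\Omega_T^\ell$ coincides with the Borel $\sigma$-algebra of the Polish topology just described. This reduces to showing that convergence in the compact-open topology on $\Omega_T$, for a sequence and limit all lying in $\Omega_T^\ell$, is equivalent to uniform $d$-convergence, which in turn follows because the relative topology on $K_\ell$ is induced by $d$ and $[0,T]$ is compact. Granted this identification, any $A\in\mathcal{B}(\Omega_T)$ decomposes as $\bigcup_\ell(A\cap\Omega_T^\ell)$ with each slice in $\sigma(\mathcal{C}_T)$, so $A\in\sigma(\mathcal{C}_T)$, completing the proof.
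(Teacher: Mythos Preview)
Your argument is essentially correct and self-contained, whereas the paper's proof is a one-line citation: it invokes Corollary~2.6 of Jakubowski~\cite{Ja}, which applies because $\Omega$ is completely regular with metrizable compacts, and Jakubowski's hypothesis~(2.13) --- that every compact subset of $\Omega$ lies in some metrizable compact --- follows from Lemma~\ref{lem:kl}. Your stratification $\Omega_T=\bigcup_\ell\Omega_T^\ell$ with $\Omega_T^\ell=C([0,T];K_\ell)$ is, in effect, a direct hands-on verification of what Jakubowski's general result encapsulates. Your route has the advantage of being explicit and not importing the Skorokhod-space machinery; the paper's route is much shorter but relies on a nontrivial external reference.

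One small correction: your displayed equality
\[
\big\{\eta:\sup_{t\in[0,T]}d(\omega(t),\eta(t))<r\big\}
=\bigcap_{t\in\rational\cap[0,T]}\big\{\eta:d(\omega(t),\eta(t))<r\big\}
\]
is not quite right, since the right-hand side allows $\sup_t d(\omega(t),\eta(t))=r$ with the supremum attained only at an irrational time. Work with closed balls instead: since $d$ in \eqref{eq:dis} is a uniformly convergent series of continuous functions on $\Omega\times\Omega$, each set $\{\zeta\in\Omega:d(\omega(t),\zeta)\le r'\}$ is closed in $\Omega$, hence Borel, and by continuity of $t\mapsto d(\omega(t),\eta(t))$ one has
\[
\overline B(\omega,r')=\bigcap_{t\in\rational\cap[0,T]}\big\{\eta:d(\omega(t),\eta(t))\le r'\big\}\in\sigma(\mathcal C_T),
\qquad
B(\omega,r)=\bigcup_{n}\overline B\big(\omega,r-\tfrac1n\big).
\]
With this adjustment your argument goes through.
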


\begin{proof}
    Since $\Omega$ is a completely regular topological spaces with
    metrizable compacts, the lemma follows from \cite[Corollary
    2.6]{Ja} applied in the present context of $C([0,T];\Omega)$
    instead of $D([0,T];\Omega)$.
    Note indeed that condition (2.13) in \cite{Ja} follows from
    Lemma~\ref{lem:kl}. 
\end{proof}

\section{Approximation of the semigroup}
\label{sec:appb}

We here discuss the approximation of the semigroup $P_t$ used in
Section~\ref{sec:id}.  
Given $\epsilon>0$ let $\chi_\epsilon\in C_K(0,1)$ such that
$0\leq\chi_\epsilon\leq1$, $\chi_\epsilon(x)=1$, for
$x\in(\epsilon,1-\epsilon)$.  For $\omega\in\Omega$ we denote by
$\omega_\epsilon$ the thinning of $\omega$ obtained by erasing
independently each particle $x\in\omega$ with probability
$1-\chi_\epsilon(x)$.  In particular,
$|\omega_\epsilon|<+\infty$ and
$\supp\omega_\epsilon\subset\supp\chi_\epsilon$.  For
$\omega\in\Omega$ and $t\in\real_+$ we define the probability
$P_{t,\epsilon}(\omega,\cdot)$ on $\Omega$, as the mixture of
$P_t(\eta,\cdot)$ with $\eta\in\Omega$ sampled according to the law of
the thinning $\omega_\epsilon$, that is 
\begin{equation}
  \label{eq:pte}
  P_{t,\epsilon}(\omega,B)
  :=\iint_{\eta_1+\eta_2\in B} \Pi_{\mu_t^\lambda}(d\eta_1)
  P_{t,\epsilon}^0(\omega,d\eta_2),
  \qquad
  B\in\mathcal{B}(\Omega),
\end{equation}
where $P_{t,\epsilon}^0(\omega,\cdot)$ is the probability on $\Omega$ characterized by,
\begin{equation*}
  \int P_{t,\epsilon}^0(\omega,d\eta) e^{i\eta(\psi)}
  =\prod_{x\in\omega}\Big(\chi_\epsilon(x)\int p_t^0(x,dy)e^{i\psi(y)}+1-\chi_\epsilon(x)\Big),
  \ 
  \psi\in C_K(0,1).
\end{equation*}

\begin{lemma}
  \label{lem:pte}
  For each $\epsilon>0$, $t>0$, and each $f\in C(\Omega)$ bounded, the
  map $\omega\mapsto P_{t,\epsilon}(\omega,f)$ is continuous.
  Furthermore $P_{t,\epsilon}(\omega,f)$ converges as $\epsilon\to0$
  to $P_t(\omega,f)$ pointwise in $\omega$ and uniformly for
  $|\omega|\leq\ell$, $\ell\in\natural$.
\end{lemma}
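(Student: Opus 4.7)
The plan is to establish both claims through the explicit characteristic-function representation (together with Lemma~\ref{lem:chf}): tightness plus pointwise convergence of these characteristic functions will give the continuity, while a direct coupling will handle the $\epsilon\to 0$ limit in a quantitative form. Throughout I will rely on the bound $|a(x,\psi)|\le 2\mathbf{P}_x^0(\tau>t)\le 2m(x)/t$ valid for $\psi\in C_K(0,1)$ since $\psi(0)=\psi(1)=0$, where $a(x,\psi):=\int p_t^0(x,dy)(e^{i\psi(y)}-1)$. Directly from the definition, the characteristic function of $P_{t,\epsilon}(\omega,\cdot)$ factors as $\exp\{\mu_t^\lambda(e^{i\psi}-1)\}\prod_{x\in\omega}(1+\chi_\epsilon(x)\,a(x,\psi))$, with the analogous formula for $P_t(\omega,\cdot)$ upon setting $\chi_\epsilon\equiv 1$.

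For continuity, assume $\omega_n\to\omega$ in $\Omega$. By Lemma~\ref{lem:kl}, $L:=\sup_n\omega_n(m)<+\infty$. Writing any $\phi\in C_K(0,1)$ as $m\cdot(\phi/m)$ with $\phi/m\in C_0(0,1)$ gives $\omega_n(\phi)\to\omega(\phi)$, i.e.\ vague convergence on $(0,1)$. Since the measures are integer-valued and $\chi_\epsilon$ has compact support, a standard vague-convergence argument implies that for $n$ large the restriction of $\omega_n$ to $\supp\chi_\epsilon$ coincides, as a finite multi-set, with the restriction of $\omega$ up to perturbations converging to the identity; thus $\Phi_\epsilon(\omega_n,\psi)\to\Phi_\epsilon(\omega,\psi)$ by continuity of $a(\cdot,\psi)$. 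To upgrade this to convergence of $P_{t,\epsilon}(\omega_n,f)$ for every bounded continuous $f$, I will use the mean-mass bound
\[
\int P_{t,\epsilon}(\omega_n,d\eta)\,\eta(m)\le\mu_t^\lambda(m)+\omega_n(m)\le\mu_t^\lambda(m)+L,
\]
which follows from the supermartingale property $\mathbf{E}_x^0[m(X_t)]\le m(x)$ (It\^o applied to $m$), combined with Chebyshev's inequality and Lemma~\ref{lem:kl}, to obtain tightness on a single compact $K_\ell$. Since $K_\ell$ is Polish, a standard Prokhorov/L\'evy argument inside $K_\ell$ then yields weak convergence of $P_{t,\epsilon}(\omega_n,\cdot)$ to $P_{t,\epsilon}(\omega,\cdot)$.

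For the limit $\epsilon\to 0$ I would bypass the characteristic functions via coupling. Write $\omega=\sum_k\delta_{x_k}$ and, on a single probability space, draw an independent Poissonian contribution $\eta_1\sim\Pi_{\mu_t^\lambda}$, independent absorbed Brownians $Y_k\sim\mathbf{P}_{x_k}^0$, and independent Bernoullis $\xi_k\sim\mathrm{Bern}(\chi_\epsilon(x_k))$. A direct characteristic-function check shows $\eta:=\eta_1+\sum_k\delta_{Y_k(t)}|_{(0,1)}\sim P_t(\omega,\cdot)$ while $\eta_\epsilon:=\eta_1+\sum_k\xi_k\,\delta_{Y_k(t)}|_{(0,1)}\sim P_{t,\epsilon}(\omega,\cdot)$. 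The event $\eta\ne\eta_\epsilon$ forces some $k$ with $\xi_k=0$ and $Y_k(t)\in(0,1)$, of probability at most $(1-\chi_\epsilon(x_k))\mathbf{P}_{x_k}^0(\tau>t)\le(1-\chi_\epsilon(x_k))m(x_k)/t$; a union bound then gives
\[
\bigl|P_{t,\epsilon}(\omega,f)-P_t(\omega,f)\bigr|\le 2\|f\|_\infty\,\mathbb{P}(\eta\ne\eta_\epsilon)\le\frac{2\|f\|_\infty}{t}\sum_{x\in\omega}(1-\chi_\epsilon(x))\,m(x).
\]
The right-hand side vanishes pointwise in $\omega$ by dominated convergence with the summable dominator $m(x)/t$; for $|\omega|\le\ell$ the universal estimate $(1-\chi_\epsilon(x))m(x)\le\epsilon$ on $(0,1)$ bounds it by $2\|f\|_\infty\ell\epsilon/t$, giving the required uniform convergence.

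The main technical obstacle is the continuity claim's final upgrade from convergence of characteristic functions to convergence of $P_{t,\epsilon}(\omega_n,f)$ for all bounded continuous $f$, since $\Omega$ itself is not Polish. The remedy, as sketched above, is to localise via the tightness estimate to a single compact $K_\ell$, on which Lemma~\ref{lem:kl} guarantees a Polish structure and Lemma~\ref{lem:chf} supplies a separating family identifying the unique cluster point; the $\epsilon\to 0$ claim avoids this issue entirely thanks to the pathwise coupling.
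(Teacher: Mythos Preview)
Your proof is correct, and it is interesting that you and the paper effectively swap the two tools between the two claims. For the continuity of $\omega\mapsto P_{t,\epsilon}(\omega,f)$ the paper avoids Prokhorov entirely: after reducing to $P^0_{t,\epsilon}$, it observes that since $\chi_\epsilon\in C_K(0,1)$ only the finitely many atoms of $\omega$ in $\supp\chi_\epsilon$ contribute, and the map sending a finite configuration $(x_1,\ldots,x_N)$ to the law of $\sum_k\xi_k\,\delta_{Y_k(t)}|_{(0,1)}$ is weakly continuous into $\Omega$ because $p_t^0$ is Feller and $[0,1]\ni y\mapsto\delta_y|_{(0,1)}\in\Omega$ is continuous. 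Your route through the explicit characteristic function, the mass bound $\int P_{t,\epsilon}(\omega_n,d\eta)\,\eta(m)\le\mu_t^\lambda(m)+L$, and Prokhorov on the Polish compact $K_\ell$ is longer but entirely valid (one small wording caveat: points of $\omega_n$ may cross $\partial\supp\chi_\epsilon$, but since $\chi_\epsilon$ vanishes there the corresponding factors tend to~$1$, so your product indeed converges).

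For the limit $\epsilon\to0$ the situation is reversed. The paper proves tightness of $\{P^0_{t,\epsilon}(\omega,\cdot)\}_{\epsilon>0}$ via the bound $P^0_{t,\epsilon}(\omega,\{|\eta|>\ell\})\le P^0_t(\omega,\{|\eta|>\ell\})$, checks convergence of the characteristic functions, and invokes Prokhorov together with Lemma~\ref{lem:chf}; the uniform part is then dispatched in one sentence by appealing to $\chi_\epsilon\to\ind_{(0,1)}$ uniformly on compacts. Your coupling is more elementary, bypasses Prokhorov, and yields the quantitative estimate
\[
\bigl|P_{t,\epsilon}(\omega,f)-P_t(\omega,f)\bigr|\le\frac{2\|f\|_\infty}{t}\sum_{x\in\omega}(1-\chi_\epsilon(x))\,m(x),
\]
from which both the pointwise statement (dominated convergence) and the uniform statement (via $(1-\chi_\epsilon(x))m(x)\le\epsilon$, hence the right-hand side is at most $2\|f\|_\infty\ell\epsilon/t$) follow immediately; this is arguably cleaner than the paper's treatment of the uniform part.
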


\begin{proof}
  In view of \eqref{eq:pte} and dominated convergence it is enough to prove the statement for the map $\omega\mapsto P_{t,\epsilon}^0(\omega,f)$.
  Since $p^0_t$ is a Feller transition function, the continuity of $\omega\mapsto P_{t,\epsilon}^0(\omega,f)$ readily follows from the definition.
  To prove the pointwise convergence $P_{t,\epsilon}^0(\omega,f)\to P_t^0(\omega,f)$, we observe that, as follows from \eqref{FF},
  \begin{equation*}
    \lim_{\ell\to+\infty}\varlimsup_{\epsilon\to0}P_{t,\epsilon}^0\big(\omega,\{\eta:|\eta|>\ell\}\big)
    \leq \lim_{\ell\to+\infty}P_t^0\big(\omega,\{\eta:|\eta|>\ell\}\big)
    =0,
  \end{equation*}
  which implies the tightness of the family $\{P_{t,\epsilon}^0(\omega,\cdot)\}_{\epsilon>0}$.
  Moreover, by a direct computation, for each $\psi\in C_K(0,1)$
  \begin{equation*}
    \lim_{\epsilon\to 0}\int P_{t,\epsilon}^0(\omega,d\eta) e^{i\eta(\psi)}
    =\prod_{x\in\omega}\int p_t^0(x,dy)e^{i\psi(y)}
    =\int P_t^0(\omega,d\eta) e^{i\eta(\psi)}.
  \end{equation*}  
  By Prokhorov's theorem (see \emph{e.g.} \cite[\S 5, Thm. 2]{Sm:Fo} for the present setting of a completely regular topological space with metrizable compacts) and Lemma~\ref{lem:chf} we then conclude.

  Finally, the uniform convergence $P_{t,\epsilon}^0(\omega,f)\to
  P_t^0(\omega,f)$ on $|\omega|\leq\ell$ follows from the convergence
  of $\chi_\epsilon\to\ind_{(0,1)}$ uniform on compact subsets of
  $(0,1)$.
\end{proof}

\subsection*{Acknowledgements}
This work was motivated by G.~Ciccotti who asked how to construct
boundary driven models on the continuum.
It is our duty and pleasure to thank  A.~Teixeira who explained us
the graphical construction presented in Section~\ref{sec:gc}.

\end{document}